\documentclass[11pt,reqno,oneside]{amsart}

\usepackage{graphicx}
\usepackage{amssymb}
\usepackage{epstopdf}

\usepackage[a4paper, total={6in, 9.1in}]{geometry}

\usepackage{amsmath,amsfonts,amsthm,mathrsfs,amssymb,cite}
\usepackage[usenames]{color}

\usepackage{graphics}
\usepackage{framed}

\usepackage{enumerate}
\usepackage{hyperref}
\usepackage{xcolor}
\hypersetup{
  colorlinks,
  linkcolor={blue!80!black},
  urlcolor={blue!80!black},
  citecolor={blue!80!black}
}
\usepackage[capitalize,noabbrev]{cleveref}


\numberwithin{equation}{section}

\newtheorem{Theorem}{Theorem}[section]
\newtheorem{Lemma}[Theorem]{Lemma}

\newtheorem{Definition}[Theorem]{Definition}

\newtheorem{Remark}[Theorem]{Remark}
\numberwithin{equation}{section}

%
%
%
%
%
%
%
%
%
%
%


\title[Inverse problem for quasilinear hyperbolic equations]{Inverse problems for a quasilinear hyperbolic equation with multiple unknowns}

\author{Yan Jiang}
\address{Department of Mathematics, Jilin University, Changchun, Jilin, China.}
\email{jiangyan20@mails.jlu.edu.cn}

\author{Hongyu Liu}
\address{Department of Mathematics, City University of Hong Kong, Hong Kong SAR, China.}
\email{hongyu.liuip@gmail.com, hongyliu@cityu.edu.hk}

\author{Tianhao Ni}
\address{School of Mathematical Sciences, Zhejiang University, Hangzhou, Zhejiang, China.}
\email{12035009@zju.edu.cn}

\author{Kai Zhang}
\address{Department of Mathematics, Jilin University, Changchun, Jilin, China.}
\email{zhangkaimath@jlu.edu.cn}

\begin{document}
\maketitle

\begin{abstract}

We propose and study several inverse boundary problems associated with a quasilinear hyperbolic equation of the form ${c(x)^{-2}}\partial_t^2u=\Delta_g(u+F(x, u))+G(x, u)$ on a compact Riemannian manifold $(M, g)$ with boundary. We show that if $F(x, u)$ is monomial and $G(x, u)$ is analytic in $u$, then $F, G$ and $c$ as well as the associated initial data can be uniquely determined and reconstructed by the corresponding hyperbolic DtN (Dirichlet-to-Neumann) map.  Our work leverages the construction of proper Gaussian beam solutions for quasilinear hyperbolic PDEs as well as their intriguing applications in conjunction with light-ray transforms and stationary phase techniques for related inverse problems. The results obtained are also of practical importance in assorted of applications with nonlinear waves.

\end{abstract}

\medskip
	
\noindent{\bf Keywords:}~~Inverse problems; quasilinear hyperbolic equation; unique identifiability and reconstruction; Gaussian beams; light-ray transform; stationary phase; observability inequality.

\noindent{\bf 2010 Mathematics Subject Classification:}~~35R30, 35A27, 35L70

\section{Introduction}\label{sect:1}

\subsection{Mathematical setup and major findings}

Let $(\mathcal{M},\overline{g})$ denote a Lorentzian manifold of dimension $1+3$ with boundary. We adopt the premise that $(\mathcal{M},\overline{g})$ possesses a  global product structure, $\mathcal{M} =[0,T]\times M$, where the metric is represented as $\bar{g}=-dt^2+g$, with $T>0$. Here, $(M,g)$ signifies a Riemannian manifold with a smooth boundary $\partial M$, and $g\in C^{10}(M)$ is the metric tensor on $M$. 

Denote $\Delta_{\overline{g}}$ as the Laplace-Beltrami operator on $(\mathcal{M},\overline{g})$. In the local coordinates $(t,x)=(t,x_1,x_2,x_3)$, we have for any $u\in C^2(\mathcal{M})$ that
\begin{align*}
\Delta_{\overline{g}}u = \left(-\partial_t^2+\Delta_g\right)u= -\partial_t^2 u+ \sum\limits_{i,j=1}^3|g|^{-1/2}\partial_{i}(|g|^{1/2}g^{ij}\partial_{j} u),
\end{align*}
where $g^{-1} := (g^{ij})_{1\leq i,j\leq 3}$, $|g| := \det g$ and $\partial_i u:=\partial u/\partial x_i$. 

In the Lorentzian manifold $(\mathcal{M},\overline{g})$, we consider the following Cauchy problem for the hyperbolic PDE:
\begin{equation}
\label{equ:Quasilinear_equation}
    \left\{
    \begin{aligned}
        &\partial^2_t u = a(x)\Delta_g (u+F(x,u))+G(x,u)\ &&\text{in}\ \mathcal{M},\\
        &u(t, x)=h(t, x)\ &&\text{on}\ \Sigma:=(0,T)\times \partial M,\\
        &u(0, x)=\varphi(x),\ \partial_t u(0,x) = \psi(x) &&\text{on}\ M,
    \end{aligned}
\right.
\end{equation}
where $a(x)$ is a strictly positive function, $F$ and $G$ are nonlinear terms to be specified later, and $h$ and $(\varphi, \psi)$ signify the boundary and initial conditions, respectively.

Next, we introduce some function spaces and notations for our subsequent use. For any integer $m\ge 0$, we set
\begin{align*}
    \mathcal{H}^{m} = \left\{h\in H^m(\Sigma): h\in H^{m-k}([0,T];H^k(\partial M)),\ \text{for }k=0,1,\cdots,m \right\},
\end{align*}
and define the energy space as
\begin{align*}
    E^m = \bigcap\limits_{k=0}^m C^k([0,T];H^{m-k}(M)),
\end{align*}
where the associated norm of the energy space $E^m$ is given by
\begin{align*}
    \|u\|_{E^m} = \sup\limits_{t\in [0,T]} \sum\limits_{k=0}^m \left\|\partial_t^k u(t,\cdot)\right\|_{H^{m-k}(M)},\,\forall\, u\in E^m.
\end{align*}
Furthermore, $E^m$ is a Banach algebra (cf.\cite{B08}), that is
\begin{align*}
    \|uv\|_{E^m}\leq C_m\|u\|_{E^m}\|v\|_{E^m},\, \forall\, u,v \in E^m.
\end{align*}

We now introduce the hyperbolic Dirichlet-to-Neumann (DtN) map as follows:
\begin{equation}
    \label{equ:DtN_map}
    \begin{aligned}
\Lambda_{a, F, G, \varphi,\psi}:\mathcal{H}^{m+1}& \to H^{m}(\Sigma),
\\h & \to  \partial_{\nu_g}(u+F(x,u))\big|_{\Sigma},
\end{aligned}
\end{equation}
where $u$ solves equation \eqref{equ:Quasilinear_equation} subject to the Dirichlet boundary input $h$. Here, $\nu$ represents the outward unit normal vector to $\partial M$, and $\partial_{\nu_g}$ is the conormal derivative.
In Section \ref{sec:proof_of_direct_problem}, we will establish the well-posedness of the DtN map (\ref{equ:DtN_map}) in a proper setup. The DtN map encodes the measurement data for our subsequent inverse problem study. It can be seen that knowing the DtN map is equivalent to knowing all the possible boundary pairs $(u, \partial_{\nu_g}(u+F(x,u)))$ where $u\in E^m$
is a solution to \eqref{equ:Quasilinear_equation}.

We proceed to introduce the inverse problem of our study which is concerned with recovering $a, F, G$ and $\varphi, \psi$ by knowledge of $\Lambda_{a, F, G, \varphi, \psi}$. That is, 
\begin{equation}\label{eq:ip1}
\Lambda_{a, F, G, \varphi, \psi}\longrightarrow a, F, G, \varphi, \psi. 
\end{equation}
In this article, we mainly study the unique identifiability and reconstruction issues for the inverse problem \eqref{eq:ip1}. For the unique identifiability issue, we derive sufficient conditions in a general setup to ensure that one can uniquely determine $(a, F, G, \varphi, \psi)$ by knowledge of $\Lambda_{a, F, G, \varphi, \psi}$; that is, the correspondence between $(a, F, G, \varphi, \psi)$ and $\Lambda_{a, F, G, \varphi, \psi}$ is one-to-one. The main results are presented in Theorems~\ref{Theo:IP11} and \ref{Theo:IP21}, which leverages the construction of proper Gaussian beam solutions for quasilinear hyperbolic PDEs and the delicate use of light-ray transforms. For the unique reconstruction issue, we establish explicit formulas in recovering $F$ and $G$ which leverages the utilization of proper Gaussian beams in combination with the stationary phase techniques. The main results can be briefly summarised as follows:

\begin{Theorem}\label{thm:sum1}
Suppose that $a, F, G$ and $\varphi, \psi$ belong to the admissible class in Definition~\ref{def:adm con}. If Assumption~I in Section~\ref{sect:3} is fulfilled, then $\Lambda_{a, F, G, \varphi, \psi}$ uniquely determines $a, F, G$ and $\varphi, \psi$. That is, the correspondence between $\Lambda_{a, F, G, \varphi, \psi}$ and $(a, F, G, \varphi, \psi)$ is one-to-one. On the other hand, if Assumption~II in Section~\ref{sect:3} is fulfilled and $a$ is a-priori given, then $\Lambda_{F, G, \varphi, \psi}$ uniquely determines $a, F, G$ and $\varphi, \psi$, and moreover there are explicit reconstruction formulas for $F$ and $G$. 
\end{Theorem}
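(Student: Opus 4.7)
The plan is to prove both halves of Theorem~\ref{thm:sum1} in parallel by combining higher-order linearization of the DtN map $\Lambda_{a,F,G,\varphi,\psi}$ with Gaussian beam constructions for the underlying hyperbolic operator, followed either by inversion of an induced light-ray transform (uniqueness under Assumption~I) or by asymptotic extraction via stationary phase (reconstruction under Assumption~II). Since the statement consolidates Theorems~\ref{Theo:IP11} and~\ref{Theo:IP21}, it suffices to describe one unified scheme whose two branches diverge only at the final coefficient-extraction step.

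First I would linearize $\Lambda_{a,F,G,\varphi,\psi}$ around the trivial boundary datum. Setting $h = \sum_{j=1}^N \epsilon_j h_j$ and differentiating $u(\epsilon)$ in the small parameters yields variations $v_j = \partial_{\epsilon_j} u|_{\epsilon=0}$ that solve linear hyperbolic problems with principal part $\partial_t^2 - a(x)\Delta_g$, zero initial data, and boundary input $h_j$. The resulting single-input linearized DtN map determines $a$ by a standard boundary symbol calculus. To recover $(\varphi,\psi)$ I would apply $\Lambda_{a,F,G,\varphi,\psi}$ to $h=0$, which supplies the Neumann trace of the free nonlinear evolution from $(\varphi,\psi)$; comparing this trace for two admissible tuples and linearizing around a fixed reference trajectory, an observability inequality for the linear wave equation, valid over the sufficiently large time horizon built into the admissible class, forces the initial data to coincide.

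Second, to extract $F$ and $G$, I would pass to the mixed derivatives $v^{(k)} = \partial^k_{\epsilon_1\cdots\epsilon_k} u|_{\epsilon=0}$ for $k\ge 2$. Each $v^{(k)}$ satisfies a linear wave equation whose source is a polynomial in the lower variations $v^{(j)}$, weighted by the $k$-th order Taylor coefficients $F_k(x)$, $G_k(x)$ of $F(x,u)$ and $G(x,u)$ at $u=0$. Equality of DtN maps between two admissible tuples then, after integration by parts against a solution $w$ of the adjoint linearized problem, yields an integral identity of the form
\begin{align*}
\int_{\mathcal M} \bigl[(F_k - \widetilde F_k)(x)\, \Phi_k + (G_k - \widetilde G_k)(x)\, \Psi_k \bigr]\, dV_{\bar g} = 0,
\end{align*}
where $\Phi_k$ and $\Psi_k$ are explicit polynomial expressions in $v_1,\dots,v_k,w$ and their derivatives. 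Choosing $v_1,\dots,v_k,w$ to be Gaussian beams concentrated on null geodesics that meet along a prescribed null line, and using the Banach-algebra property of $E^m$ to control the products, the identity reduces to a light-ray transform of the coefficient differences. Injectivity of that transform, which is precisely what Assumption~I encodes, forces $F_k=\widetilde F_k$ and $G_k=\widetilde G_k$ for every $k$, and the monomial form of $F$ together with the analyticity of $G$ in $u$ promotes these to $F=\widetilde F$ and $G=\widetilde G$.

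For the reconstruction half, under Assumption~II with $a$ a-priori known, I would refine the choice of Gaussian beams so that their phases meet transversally at a prescribed point $(t_0,x_0)\in\mathcal M$ and apply the method of stationary phase to the analogous integral identity. The leading asymptotic term, explicit in the beam amplitudes and phase Hessians, delivers a closed-form expression for $F_k(x_0)$ and $G_k(x_0)$ in terms of the measurement data; letting $(t_0,x_0)$ range recovers the full Taylor series pointwise and hence $F$ and $G$ themselves. The main obstacle, I expect, lies in the Gaussian beam calculus for the quasilinear operator: the WKB ansatz must be pushed to a sufficiently high order to absorb the polynomial nonlinearity, the remainder must be controlled in the energy space $E^m$ whose Banach-algebra constant enters every multiplicative estimate, and the phases must remain null for $-dt^2+g$ while avoiding conjugate points throughout $[0,T]$. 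Once this beam calculus, together with the observability and light-ray inversion ingredients, is in place, the extraction of $a$ and the stationary phase reconstruction of $F$ and $G$ proceed along established lines.
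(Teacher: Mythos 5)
Your overall architecture (higher-order linearization, Gaussian beams, light-ray transform under Assumption~I, stationary phase under Assumption~II, observability for the initial data) is the same as the paper's, but three steps as you describe them would not go through. First, you claim the single-input linearized DtN map "determines $a$ by a standard boundary symbol calculus." Symbol calculus at the boundary only determines the jet of $a$ on $\partial M$, not $a$ in the interior. The paper recovers $a$ in the interior from the \emph{same} first-order integral identity you use later for the lower-order coefficients: pairing the difference equation against an adjoint Gaussian beam, the term $(a_2-a_1)\Delta u^{(1)}$ carries an extra factor $\tau^2$ relative to the zeroth-order term, so $\lim_{\tau\to\infty}\tau^{-2}L_1$ isolates $1-a_1/a_2$ and reduces to the light-ray transform $\mathcal{J}^{(2)}$, whose injectivity is exactly where the no-conjugate-points hypothesis of Assumption~I enters. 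Second, and relatedly, your $k$-th order identity contains \emph{both} $(F_k-\widetilde F_k)\Phi_k$ and $(G_k-\widetilde G_k)\Psi_k$ in a single scalar equation; injectivity of one ray transform cannot force two independent coefficient differences to vanish. The paper separates them by the same $\tau$-weighting device: since $F$ sits inside $\Delta_g$, its contribution appears at order $\tau^2$ (with the extra factor $\langle d\phi_2,d\overline{\phi_2}\rangle_{\overline g}>0$), so $\lim\tau^{-2}L_2$ determines $b_2$ first, after which the $O(1)$ part determines $c_2$. You need to say how $\Phi_k$ and $\Psi_k$ are disentangled.

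Third, the transform that actually arises when you multiply $k$ Gaussian beams against the adjoint beam is not the plain light-ray transform but a \emph{weighted} one, $\mathcal{J}^{(m)}_{\widetilde Y}f=\int f(\gamma(t))\,|\det\widetilde Y|^{-1}(\det\widetilde Y(t))^{-m/2}\,dt$, because the amplitudes $\det(Y)^{-1/2}$ stack up. Its injectivity is not "precisely what Assumption~I encodes"; the paper has to prove it as a separate technical lemma (Lemma~\ref{ray_transform}), via a perturbation $\widetilde Y^{\eta}=X_1-\mathrm{i}\eta X_2$ of the Jacobi field and a careful splitting of the integral near and away from the base point. This is a genuinely new ingredient you have omitted. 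Finally, a logical-ordering point: your observability argument for $(\varphi,\psi)$ compares two admissible tuples, but the equation for $v=u_1-u_2$ only has the form to which observability applies (coefficients $K_1,K_2$ given by bounded difference quotients of a \emph{single} pair $(F,G)$, with no extra source from $F_1-F_2$ or $G_1-G_2$) after $a,F,G$ have already been shown to coincide; the paper therefore determines the coefficients first, checking that the linearizations are insensitive to the unknown initial data, and only then invokes observability.
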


Finally, we would like to point out that it is straightforward to see that if one considers the hyperbolic equation of the form ${c(x)^{-2}}\partial_t^2u=\Delta_g(u+F(x, u))+G(x, u)$ with a strictly positive $c(x)$, then the hyperbolic DtN map can uniquely determine $c, F, G$ as well as the associated initial data. 

%

\subsection{Background discussion and technical novelties}

Quasilinear hyperbolic equations find wide-ranging applications in various fields, including fluid dynamics, gas dynamics, elasticity, electromagnetism, and general relativity. They provided mathematical frameworks for modelling wave propagation in different media and understanding phenomena such as shock waves, sound waves, and electromagnetic waves. The term ``quasilinear'' was coined to describe equations where the coefficients depend nonlinearly on the unknown function and its derivatives. The systematic study of quasilinear hyperbolic equations and their properties, including quasilinear variants, took shape in the early to mid-20th century. We refer to \cite{L51,MN98,PS20,HY23,MR23, JLNZ23, CW95,MCW94} for several quasilinear hyperbolic equations that are related to the current study. In particular, we note the nonlinear progressive equation from \cite{JLNZ23, CW95,MCW94} to describe the infrasonic wave propagation, which forms one of the major motivating problems for our study and will be discussed in more details in Section~\ref{sect:2}.

The inverse problem for quasilinear hyperbolic equations deals with determining unknown parameters or functions in the equation based on observed data. This problem has applications in various fields including medical imaging, geophysics, nondestructive testing, and material science. In particular, we refer to \cite{JLNZ23} on applications related to infrasonic inversions. In inverse problems involving nonlinear equations, it is a common practice to linearize the equation and utilize solutions for the linearized ones. This approach is evident in various scenarios such as the treatment of semilinear parabolic equations~\cite{I93}, the nonlinear Calder\'on problem~\cite{SunU04}, and the nonlinear Navier-Stokes equation~\cite{LW07}. However, since the publication of \cite{KLU18}, there has been a shift towards leveraging the nonlinearity itself to tackle certain inverse problems. The key is the interaction of multiple waves, generating new waves that yield information beyond what is attainable through the linearized equation. Meanwhile, significant tools have been developed to address inverse problems for nonlinear equations, including the light ray transforms and the stationary phase techniques along with the Gaussian beam solutions; see  \cite{SU04, FO20, FIKO21, UZ21b,L1,L2,L3} and the references cited therein. 

In this paper, we delve into inverse problems for quasilinear hyperbolic equations with multiple unknowns, thus significantly extending our findings from \cite{JLNZ23} to a much broad scope. We derive both unique identifiability results and reconstruction formulas for the proposed inverse problems.  Our work leverages the construction of proper Gaussian beam solutions for quasilinear hyperbolic PDEs as well as their intriguing applications in conjunction with light-ray transforms and stationary phase techniques for related inverse problems.


The rest of the paper is oganized as follows. In Section~\ref{sect:2}, we establish the local well-posedness of the forward problem. Section~\ref{sect:3} presents the geometric conditions for the inverse problems and summarizes the main theorems. In Section~\ref{sect:4}, we provide the proofs of the main theorems.

\section{Local well-posedness of the forward problem}\label{sect:2}

\subsection{Technical preparations }

Consider the hyperbolic system \eqref{equ:Quasilinear_equation}. We introduce the following technical conditions on $F, G$ and $\varphi, \psi$ for our subsequent study.
 
\begin{Definition}\label{def:adm con}
The functions $a, F$, $G$, $h$, $\varphi$, and $\psi$ are said to satisfy the admissible conditions if the following criteria are fulfilled:
\begin{enumerate}
\item[(1)] $a(x)$ is a strictly positive smooth function. $F$ has the following monomial form:
\begin{equation}\label{eq:mam1}
F(x,y) =  b_n(x)y^n,
\end{equation}
where $b_n \in C^{\infty}({M})$ with $n\in\mathbb{N}$ and $n\ge 2$. $G$ possesses the following Taylor's series representation:
\begin{equation}\label{eq:g ck}
G(x,y) = \sum\limits_{k=1}^{\infty} c_k(x){y^k}/{k!},
\end{equation}
where $c_k \in C^{\infty}({M})$ for $k=1,2,\cdots$. Furthermore, we assume that there exists a positive constant $C$ such that
\begin{equation}\label{eq:mam2}
    \| b_n\|_{L^{\infty}({M})}\leq C,\,\,\,\|c_k\|_{L^{\infty}({M})}\leq C
\end{equation}
for all $k\in\mathbb{N}$. 

\item[(2)] On the boundary $\partial M$, the triple $(h,\varphi,\psi)$ satisfy the following compatibility conditions:
\begin{equation}\label{eq:adm3}
\left\{
\begin{aligned}
&h(0,x)=\varphi(x),\\
&\partial_t h(0,x) = \psi(x),\\
&\partial_{t}^2h(0,x) = a(x)\Delta_{g}(h(0,x)+F(x,\varphi(x))+G(x,\varphi(x)),\\
&\cdots\\
&\mbox{up to the m-th order derivative of $h$},
\end{aligned}
\right.
\end{equation}
where $h\in H^{m}(\Sigma)$ for a positive integer $m$, and $H^{m}(\Sigma)$ denotes the Sobolev space. 

\item[(3)] For homogeneous boundary and initial conditions, there exists a unique solution $u\equiv0$ that satisfies the hyperbolic system \eqref{equ:Quasilinear_equation}.
\end{enumerate}
\end{Definition}

Two remarks are in order regarding the assumptions on the function parameters in our study. First, we note that the nonlinear progressive equation (NPE) belongs to equation \eqref{equ:Quasilinear_equation} that fulfils \eqref{eq:mam1}, which describes the propagation of infrasound \cite{CW95,MCW94}.
In fact, the NPE takes the following form:
\begin{equation}
\label{equ:infrasound_equation}
\frac{1}{c_0(x)^{2}}\partial_t^2u = \Delta \left(u+\frac{c_2(x)}{c_0(x)^2}u^2\right),
\end{equation}
where $c_0(x)$ signifies the speed of the wave propagation and $c_2(x)$ is a nonlinear coefficient determined by the sound pressure and the atmospheric density. Hence, it is of the form \eqref{eq:mam1}--\eqref{eq:g ck} with $a(x)=c_0(x)^2$, $F=\frac{c_2(x)}{c_0(x)^2}u^2$ and $G=0$ under the Euclidean metric. Second, in \eqref{eq:mam1} and \eqref{eq:g ck}, we assume that $F$ and $G$ are independent of $t$, namely the time variable. This technical condition will be required only for the inverse problem study. For the forward problem, we can establish a more general result with time-dependent $F$ and $G$, which is of independent interest for its own sake. In such a case, $b_n(x)$ and $c_k(x)$ in \eqref{eq:mam1}--\eqref{eq:mam2} are replaced by $b_n(x, t)$ and $c_k(x, t)$ and $M$ is replaced by $\mathcal{M}$, and $F(x, \varphi)$ and $G(x,\varphi)$ in \eqref{eq:adm3} are replaced by $F(t=0, x, \varphi)$ and $G(t=0, x,\varphi)$ accordingly. 

We proceed to establish the local well-posedness of the forward problem \eqref{equ:Quasilinear_equation} with time-dependent $F$ and $G$ as described above. Our approach leverages the implicit function theorem in Banach spaces in conjunction with the following lemma (\cite{P87}):

\begin{Lemma}\label{lemma:analytic}
Consider a mapping $f: U \to F$, where $U\subset E$ and both $E$ and $F$ are complex Banach spaces. The following two conditions are equivalent:
\begin{enumerate}
\item $f$ is analytic on $U$;

\item $f$ is locally bounded and weakly analytic on $U$, where weakly analyticity means that for any $x_0, x \in U$, the mapping $\lambda \mapsto f(x_0 + \lambda x)$ is analytic in a neighborhood of the origin in the complex plane.
\end{enumerate}
\end{Lemma}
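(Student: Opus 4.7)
The plan is to establish the two implications separately. The direction $(1)\Rightarrow(2)$ is essentially a matter of unpacking the definition of analyticity, whereas the converse is the substantive content and requires Cauchy-type integral representations together with a polarization argument.

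For $(1)\Rightarrow(2)$, analyticity of $f$ on $U$ means that around every point $x_0\in U$ there is a locally uniformly convergent expansion $f(x_0+h)=\sum_{n=0}^\infty P_n(h)$ with each $P_n$ a continuous $n$-homogeneous polynomial on $E$. Local boundedness is immediate from uniform convergence on a small closed ball. Weak analyticity follows by substituting $h=\lambda x$: the resulting series $f(x_0+\lambda x)=\sum_n \lambda^n P_n(x)$ is a convergent $F$-valued power series in the scalar $\lambda$ in a neighborhood of $0$.

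For the nontrivial direction $(2)\Rightarrow(1)$, fix $x_0\in U$ and choose $r>0$ so small that the closed ball $\overline{B}(x_0,2r)\subset U$ and $M:=\sup_{\overline{B}(x_0,2r)}\|f\|<\infty$ by local boundedness. For any $x\in E$ with $\|x\|\le r$ and $|\lambda|\le 2$, the point $x_0+\lambda x$ lies in $\overline{B}(x_0,2r)$, so by weak analyticity the slice $g_x(\lambda):=f(x_0+\lambda x)$ is an $F$-valued analytic function on a neighborhood of the disk $\{|\lambda|\le 2\}$. Cauchy's integral formula then yields
\begin{equation*}
f(x_0+x)\;=\;g_x(1)\;=\;\sum_{n=0}^{\infty}P_n(x),\qquad P_n(x):=\frac{1}{2\pi i}\oint_{|\lambda|=2}\frac{f(x_0+\lambda x)}{\lambda^{n+1}}\,d\lambda,
\end{equation*}
with the Cauchy estimate $\|P_n(x)\|\le M/2^n$ guaranteeing uniform absolute convergence on $\|x\|\le r$.

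The main obstacle is verifying that each $P_n$ is genuinely a continuous $n$-homogeneous polynomial, which is what the definition of analyticity demands. Homogeneity $P_n(\mu x)=\mu^n P_n(x)$ comes from the substitution $\lambda\mapsto\lambda/\mu$ in the contour integral. To upgrade $P_n$ from a bounded $n$-homogeneous continuous map to an $n$-homogeneous polynomial in the Banach-space sense, the plan is to post-compose with an arbitrary continuous linear functional $\ell\in F^*$, thereby reducing to the scalar-valued situation: the Taylor coefficients of the ordinary analytic function $\lambda\mapsto\ell(g_x(\lambda))$ at the origin equal $\ell(P_n(x))$, which by the classical scalar theory (extraction of a symmetric continuous $n$-linear form via the polarization identity) depend polynomially on $x$. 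The Hahn-Banach theorem then transfers polynomiality back to the $F$-valued map $P_n$, while the Cauchy bound supplies the continuity, completing the equivalence.
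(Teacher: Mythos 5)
First, a point of comparison: the paper does not prove this lemma at all. It is quoted verbatim as a known result with a citation to [P87] (P\"oschel's \emph{Inverse Spectral Theory}), so there is no in-paper argument to measure your proof against; what you have written is a from-scratch proof of a classical theorem on holomorphy in complex Banach spaces. Your overall strategy --- Cauchy integrals on complex lines, Cauchy estimates from local boundedness, and a Taylor expansion into homogeneous terms --- is the standard and correct route (it is essentially the Hille--Phillips/Mujica argument), and the direction $(1)\Rightarrow(2)$ is fine. One small point you elide: weak analyticity only gives analyticity of $\lambda\mapsto f(x_0+\lambda x)$ \emph{near the origin}, so to integrate over $|\lambda|=2$ you must first observe that for each $\lambda_0$ with $|\lambda_0|\le 2$ the point $x_0+\lambda_0 x$ lies in $U$, and weak analyticity based at that point (with the same direction $x$) gives analyticity of the slice near $\lambda_0$; this one-line patch is needed and easy.

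The genuine gap is in the step that identifies $P_n$ as an $n$-homogeneous polynomial. As written, your argument is circular: the polarization identity
\begin{equation*}
A_n(x_1,\dots,x_n)=\frac{1}{n!\,2^n}\sum_{\varepsilon\in\{-1,1\}^n}\varepsilon_1\cdots\varepsilon_n\,P_n\Bigl(\sum_{j=1}^n\varepsilon_j x_j\Bigr)
\end{equation*}
recovers the symmetric $n$-linear form \emph{assuming} $P_n$ is already the diagonal restriction of such a form, which is precisely what you are trying to prove; passing through $\ell\in F^*$ and invoking ``classical scalar theory'' does not supply this. The standard repair is to define the candidate multilinear form directly by an iterated Cauchy integral,
\begin{equation*}
A_n(x_1,\dots,x_n)=\frac{1}{(2\pi i)^n}\oint\cdots\oint\frac{f\bigl(x_0+\lambda_1x_1+\cdots+\lambda_nx_n\bigr)}{\lambda_1^{2}\cdots\lambda_n^{2}}\,d\lambda_1\cdots d\lambda_n,
\end{equation*}
and to prove additivity in each slot by noting that $(\lambda_1,\dots,\lambda_{n+1})\mapsto f(x_0+\sum_j\lambda_j x_j)$ is separately analytic (by weak analyticity, shifted to the appropriate base points) and locally bounded, hence jointly analytic in finitely many complex variables; additivity then follows from the chain rule for this joint analytic function, and $P_n(x)=A_n(x,\dots,x)$ is verified by comparing Taylor coefficients along the diagonal. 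This ``baby Hartogs'' step --- joint analyticity of the finite-dimensional restrictions --- is the missing idea in your sketch, and without it the polynomiality of $P_n$, and hence the equivalence with Fr\'echet-differentiability-based analyticity, is not established.
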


We proceed to present an auxiliary result by considering the following linear hyperbolic equation:
\begin{equation}
\label{equ:linear_hyperbolic_equation}
\left\{
    \begin{aligned}
    &\partial_t^2 v-a(x)\Delta_{g}v+q(t,x)v=p(t,x),\ &&\text{on}\ \mathcal{M},\\
        &v(t,x)=h,\ &&\text{on}\ \Sigma,\\
        &v(0,x)=\varphi(x),\ \partial_t v(0,x) = \psi(x),&&\text{on}\ M,
\end{aligned}\right.
\end{equation}
where $p$ and $q$ are given functions. The following lemma in \cite{KKL01} provides the global well-posedness of the hyperbolic equation \eqref{equ:linear_hyperbolic_equation}.
\begin{Lemma}
\label{lemma: wellposed linear hyperbolic equation}
Let $m$ be a positive integer and $T>0$. Suppose $h\in \mathcal{H}^{m+1}(\Sigma)$, $\varphi\in H^{m+1}(M)$, $\psi\in H^{m}(M)$, $\partial_t^j p(\cdot,x)\in L^1([0,T];H^{m-k}(M))$ ($j=0,1,\cdots,m$), and the ensuing compatibility conditions are satisfied on $\partial M$:
\begin{equation*}
\left\{
\begin{aligned}
&h(0,x) = \varphi(x),\\
&\partial_t h(0,x) = \psi(x),\\
&\partial_t^2h(0,x)-a(x)\Delta_g \varphi(x)+q(0,x)\varphi(x)=p(0,x),\\
&\partial_t^3h(0,x)-a(x)\Delta_g \psi(x)+q(0,x)\psi(x)+\partial_t q(0,x)\varphi(x)=\partial_t p(0,x),\\
&\cdots\\
&\text{up to the m-$th$ order temporal derivative of $h$}.
\end{aligned}\right.
\end{equation*}
Then the hyperbolic equation \eqref{equ:linear_hyperbolic_equation} has a unique solution $v\in E^{m+1}$, and $\partial_{\nu} v \in H^m(\Sigma)$.
\end{Lemma}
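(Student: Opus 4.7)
The plan is to prove the result in three stages: reduction to homogeneous Dirichlet data, interior regularity in the energy space $E^{m+1}$, and the boundary trace regularity $\partial_\nu v \in H^m(\Sigma)$.

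First, I would construct a lifting $H \in E^{m+1}$ of the boundary datum $h \in \mathcal{H}^{m+1}$ whose values and time derivatives at $t=0$ match $(\varphi,\psi,\ldots)$ on $\partial M$ in the sense dictated by the stated compatibility conditions; such a lifting exists by standard anisotropic extension theorems given the definition of $\mathcal{H}^{m+1}$. Setting $v = H + w$ reduces the problem to a hyperbolic equation for $w$ with homogeneous Dirichlet boundary data, adjusted initial data in $H^{m+1}(M)\times H^m(M)$, and modified source $\tilde p = p - (\partial_t^2 - a\Delta_g + q)H \in L^1([0,T];H^m(M))$; the compatibility assumptions translate into vanishing of $\partial_t^j w(0,\cdot)$ on $\partial M$ for $0\le j\le m$.

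For the second stage, I would apply a Galerkin scheme using eigenfunctions of $-a\Delta_g$ with homogeneous Dirichlet conditions on $M$. The basic energy identity, obtained by multiplying the equation by $\partial_t w$ and integrating over $M$, combined with Gronwall's inequality, yields an estimate in $C([0,T];H^1_0(M))\cap C^1([0,T];L^2(M))$. To bootstrap to $E^{m+1}$, I would differentiate the equation in $t$ up to $m$ times; the compatibility conditions precisely guarantee that the initial values $\partial_t^j w(0,\cdot)$ lie in the correct spaces. Applying the energy estimate to each $\partial_t^j w$, and then using the equation to invert the elliptic operator through $a\Delta_g w = \partial_t^2 w + qw - p$, trades temporal for spatial regularity and yields $w \in E^{m+1}$.

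The main obstacle is the third stage, since the standard trace theorem applied to $v \in E^{m+1}$ only gives $\partial_\nu v(t,\cdot) \in H^{m-1/2}(\partial M)$ for a.e.\ $t$, which is strictly weaker than $\partial_\nu v \in H^m(\Sigma)$. This is a hidden regularity statement that I would establish via a multiplier argument of Lasiecka--Lions--Triggiani type: for each $0 \le j \le m$, multiply the hyperbolic equation satisfied by $\partial_t^j v$ by $X \cdot \nabla(\partial_t^j v)$, where $X$ is a smooth vector field on $M$ extending the outward unit normal $\nu$ from $\partial M$, and integrate over $\mathcal{M}$. Careful integration by parts isolates the boundary term $\int_\Sigma |\partial_\nu \partial_t^j v|^2$, while the interior contributions are absorbed into the $E^{m+1}$ energy bound from the second stage. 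Summing the resulting estimates over $j$ and recovering tangential derivatives via elliptic regularity along $\partial M$ yields $\partial_\nu v \in H^m(\Sigma)$. Uniqueness for the whole problem then follows from the basic linear energy estimate applied to the difference of two solutions.
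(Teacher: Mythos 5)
A preliminary remark on the comparison: the paper does not prove this lemma at all --- it is quoted as a known result from \cite{KKL01} --- so your sketch is being judged against the standard literature rather than against an argument in the text. Your overall architecture (lifting to homogeneous Dirichlet data, time-differentiated energy estimates, a multiplier identity for the normal trace) draws on the right family of techniques, and Stage 3 correctly recognizes that $\partial_\nu v\in H^m(\Sigma)$ is a hidden-regularity statement that cannot come from the trace theorem.

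There is, however, a genuine gap in Stage 1 that propagates into Stage 2. A lifting $H\in E^{m+1}$ of $h\in\mathcal{H}^{m+1}$ satisfies only $a\Delta_g H\in C([0,T];H^{m-1}(M))$; even the optimal anisotropic extension, which gains half a spatial derivative from the trace theorem so that $H\in H^{m+1-k}([0,T];H^{k+1/2}(M))$ for each $k$, yields only $a\Delta_g H\in L^2([0,T];H^{m-1/2}(M))$. In either case the modified source $\tilde p=p-(\partial_t^2-a\Delta_g+q)H$ fails to satisfy $\partial_t^j\tilde p\in L^1([0,T];H^{m-j}(M))$, so the Galerkin bootstrap of Stage 2 delivers at best $w\in E^{m+1/2}$, and hence $v\in E^{m+1/2}$ rather than $E^{m+1}$. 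The missing half derivative is precisely the content of the sharp regularity theory for the nonhomogeneous hyperbolic Dirichlet problem (Sakamoto, Lasiecka--Lions--Triggiani): the interior regularity $v\in E^{m+1}$ for Dirichlet data merely in $\mathcal{H}^{m+1}$ cannot be reached by an elliptic-style lifting plus energy estimates. The standard repair is to establish the base case --- $h\in H^1(\Sigma)$ with compatible data gives $v\in C([0,T];H^1(M))\cap C^1([0,T];L^2(M))$ and $\partial_\nu v\in L^2(\Sigma)$ --- by transposition against the backward homogeneous problem, whose normal trace is controlled by exactly the multiplier identity you invoke in Stage 3; one then applies this base case to $\partial_t^m v$, whose Dirichlet datum $\partial_t^m h$ lies in $H^1(\Sigma)$ by the definition of $\mathcal{H}^{m+1}$, and recovers the spatial regularity of the lower-order time derivatives through the equation. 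In short: the hidden-regularity machinery must enter at the level of existence and interior regularity, not only at the final trace estimate, and your Stages 1--2 as written would not produce the claimed $E^{m+1}$ solution.
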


We are in a position to present the local well-posedness of the hyperbolic system \eqref{equ:Quasilinear_equation}.
\begin{Theorem}[Local well-posedness]
\label{theorem:local well-posedness}
Under the admissibility conditions in Definition~\ref{def:adm con}, for any sufficiently small $\epsilon_0$ and positive integer $m$, and assuming the boundary and initial values belong to the set
\begin{equation}\label{eq:setn1}
\mathcal{N} = \left\{h\in \mathcal{H}^{m+1}(\Sigma),\varphi\in H^{m+1}(M),\psi\in H^m(M) : \|h\|_{H^{m+1}(\Sigma)}+\|\varphi\|_{H^{m+1}(M)}+\|\psi\|_{H^m(M)}<\epsilon_0\right\},
\end{equation}
there exists a unique solution $u\in E^{m+1}$ such that $\partial_{\nu} u\in H^m(\Sigma)$ and
\begin{align*}
\|u\|_{E^{m+1}}+\|\partial_{\nu} u\|_{H^m(\Sigma)}\leq C\epsilon_0,
\end{align*}
where $C$ is a positive constant independent of $u$, $h$, $\varphi$, and $\psi$.
\end{Theorem}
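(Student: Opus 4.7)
The plan is to recast \eqref{equ:Quasilinear_equation} as a zero-locus problem in Banach spaces and invoke the analytic implicit function theorem, using Lemma \ref{lemma: wellposed linear hyperbolic equation} to invert the linearization. Introduce the target space
\[
Y=\bigl\{(p,h,\varphi,\psi):\partial_t^j p\in L^1([0,T];H^{m-j}(M))\ (j=0,\ldots,m),\ h\in\mathcal{H}^{m+1}(\Sigma),\ \varphi\in H^{m+1}(M),\ \psi\in H^m(M)\bigr\}
\]
restricted by the order-$m$ compatibility conditions on $\partial M$ associated with the reference operator $\partial_t^2-a\Delta_g+c_1(t,x)$, and define
\[
\Psi:E^{m+1}\to Y,\qquad \Psi(u)=\bigl(\partial_t^2 u-a\Delta_g(u+F(x,u))-G(x,u),\ u|_{\Sigma},\ u(0,\cdot),\ \partial_t u(0,\cdot)\bigr).
\]
Solving \eqref{equ:Quasilinear_equation} for $(h,\varphi,\psi)\in\mathcal{N}$ is then equivalent to finding $u$ with $\Psi(u)=(0,h,\varphi,\psi)$.

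The first step is to check that $\Psi$ is well defined and analytic near $0$. Since $E^m$ is a Banach algebra, the monomial term $F(x,u)=b_n(x)u^n$ yields a polynomial map $u\mapsto a\Delta_g F(x,u)$ whose time derivatives distribute onto products of derivatives of $u$ that stay in the required $L^1H^{m-j}$ class. The iterated algebra estimate $\|u^k\|_{E^m}\leq C_m^{k-1}\|u\|_{E^m}^k$ together with the uniform bound $\|c_k\|_{L^\infty}\leq C$ shows that $G(x,u)=\sum_k c_k(x)u^k/k!$ converges in $E^m$ on every ball of $E^{m+1}$, with tail dominated by $(C/C_m)\bigl(e^{C_m\|u\|_{E^m}}-1\bigr)$. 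Local boundedness plus weak analyticity on each complex line then give the analyticity of $\Psi$ by Lemma \ref{lemma:analytic}.

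Next I compute the linearization. Since $n\geq 2$ we have $\partial_u F(x,0)=0$, while $\partial_u G(x,0)=c_1(x)$, so
\[
D\Psi(0)[v]=\bigl(\partial_t^2 v-a(x)\Delta_g v-c_1(x)v,\ v|_{\Sigma},\ v(0,\cdot),\ \partial_t v(0,\cdot)\bigr)
\]
is precisely the data-to-solution map of the linear problem \eqref{equ:linear_hyperbolic_equation} with $q=-c_1$. Lemma \ref{lemma: wellposed linear hyperbolic equation} therefore shows that $D\Psi(0):E^{m+1}\to Y$ is a bounded linear isomorphism. Admissibility condition (3) of Definition \ref{def:adm con} together with $F(x,0)=G(x,0)=0$ gives $\Psi(0)=0$, so the analytic implicit function theorem produces $\epsilon_0>0$ and an analytic solution map $\mathcal{N}\to E^{m+1}$ satisfying $\|u\|_{E^{m+1}}\leq C\epsilon_0$. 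For the boundary trace I then rewrite the equation as the linear system $\partial_t^2 u-a\Delta_g u+c_1 u=a\Delta_g F(x,u)+G(x,u)-c_1 u$ with right-hand side already controlled by the previous step, and invoke the conormal-trace conclusion in Lemma \ref{lemma: wellposed linear hyperbolic equation} to obtain $\partial_\nu u\in H^m(\Sigma)$ with the desired bound.

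The main obstacle I anticipate is the bookkeeping of compatibility conditions of order $m$: one must confirm that $\Psi$ truly lands in the constrained subspace $Y$, so that the nonlinear identities in Definition \ref{def:adm con}(2) translate correctly into the linear compatibility conditions for $\partial_t^2-a\Delta_g+c_1$, and conversely that the inverse of $D\Psi(0)$ returns solutions in $E^{m+1}$. Matching these identities order by order, together with a uniform estimate on tail terms from the infinite series defining $G$, is where the delicate work lies; once secured, analyticity and the open-mapping consequence of Lemma \ref{lemma: wellposed linear hyperbolic equation} combine to close the argument.
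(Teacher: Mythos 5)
Your proposal is correct and follows essentially the same route as the paper: recast the problem as the zero locus of an analytic map between Banach spaces, verify analyticity via local boundedness plus weak analyticity (Lemma \ref{lemma:analytic}), identify the linearization at $0$ with the linear hyperbolic problem so that Lemma \ref{lemma: wellposed linear hyperbolic equation} gives the isomorphism, and conclude by the implicit function theorem. Your inverse-function-theorem phrasing and your explicit handling of the conormal trace by treating $a\Delta_g F+G-c_1u$ as a known source are only cosmetic variants of the paper's error-operator formulation.
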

\begin{Remark}
\label{rmk:sobolev_embedding}
According to the Sobolev embedding theorem, we observe that $u\in C^{\ell}(\mathcal{M})$ holds true when $m>\frac{3}{2}+\ell$ for any non-negative integer $\ell$ (\cite{A03}).
\end{Remark}

\begin{proof}[Proof of Theorem~\ref{theorem:local well-posedness}]

The proof proceeds in three steps as follows. 

\textbf{Step 1.} Consider the following spaces
\begin{align*}
    U_0 = &\left\{p\in L^1([0,T];H^m(M)):\partial_t^j p(\cdot,x)\in L^1([0,T];H^{m-j}(M)),\forall j=1,\cdots,m \right\},\\
    U_1 = &\mathcal{H}^{m+1}\times H^{m+1}(M)\times H^{m}(M),\\
    U_2 = &\left\{u\in E^{m+1}:u\big|_{\Sigma}\in \mathcal{H}^{m+1},\partial_{\nu}u\in H^{m}(\Sigma),u(0,\cdot)\in H^{m+1}(M),\partial_t u(0,\cdot)\in H^{m}(M),\right.\\
    &\left. \partial_t^2 u-\Delta_g (u+F(t,x,u))\in U_0\right\},
\end{align*}
and set the error operator as
\[
  E:U_1\times U_2\to U_0\times U_1,
\]
with
\[
    E(u;h,\varphi,\psi) = [E_e; (u(t,x)-h)|_{\Sigma},u(0,x)-\varphi(x),\partial_t u(0,x)-\psi(x)].
\]
Here, $u\in U_1,(h,\varphi,\psi)\in U_2$, and $E_e: U_1\times U_2\to U_0$ is given by
\[
E_e(u) = \partial_t^2 u-a(x)\Delta_{g}\left(u+F(t,x,u)\right)-G(t,x,u).
\]

We first establish the well-definedness of $E$, implying that we only need to demonstrate that for any $u\in U_2$, it yields that
\[
E_e(u) \in U_0 \quad \mbox{or} \quad G(t,x,u)= \sum\limits_{k=1}^{\infty} c_k(t,x)\frac{u^k}{k!} \in U_0.
\]

On one hand, for $k\geq 1$, it follows from the definition of $G$ and the CBS inequality that
\begin{align}\label{equ:G est}
    \|G(\cdot,\cdot,u)\|_{L^1([0,T];H^m(M))} \leq &\sum\limits_{k=1}^{\infty}\frac{1}{k!} \|c_k\|_{L^1([0,T];H^m(M))}\|u\|_{L^1([0,T];H^m(M))}^k \nonumber\\
    \leq &\sum\limits_{k=1}^{\infty} \frac{C_1^k}{k!} \|c_k\|_{L^1([0,T];H^m(M))}\|u\|_{E^m}^k \leq Ce^{C_1\|u\|_{E^m}},
\end{align}
where $C$ denotes the upper bound under the admissible conditions and $C_1$ is a positive number depending on $T$.

On the other hand, for $j=1$ and $k\geq 1$, we have
\begin{align*}
   \|\partial_t\left(c_ku^k\right)\|_{L^1([0,T];H^{m-1}(M))} \leq &\|u^k\partial_t c_k\|_{L^1([0,T];H^{m-1}(M))}+k\|u^{k-1}c_k\partial_tu\|_{L^1([0,T];H^{m-1}(M))}\\
   \leq & C (1+k)C_1^k\|u\|^k_{E^{m-1}},
\end{align*}
which readily gives that
\[
 \|\partial_t G(\cdot,\cdot,u)\|_{L^1([0,T];H^{m-1}(M))}  \leq  C\sum\limits_{k=2}^{\infty}\frac{1+k}{k!}C_1^k \|u\|^k_{E^{m-1}}
    \leq 2Ce^{C_1\|u\|_{E^{m-1}}}.
  \]
This implies that $\partial_t G(\cdot,\cdot,u)\in L^1([0,T];H^{m-1}(M))$, and likewise, $\partial^{j}_t G(\cdot,\cdot,u)\in L^1([0,T];H^{m-1}(M))$ for $j=2,\cdots,m$. Coupled with (\ref{equ:G est}), this suggests that $G(\cdot,\cdot,u) \in U_0$.

\textbf{Step 2.} The admissibility condition on $G$ suggests that for any $u_1,u_2\in U_1$, there exists
\[
   G(t,x,u_1+\lambda u_2)=\sum\limits_{k=1}^{\infty} c_k(t,x)\frac{(u_1+\lambda u_2)^k}{k!},
\]
which indicates that the nonlinear operator $G(\cdot,\cdot,u)$ possesses weak analyticity. In addition, the boundedness of $c_k$, $u_1$, and $u_2$ ensures that $G(\cdot,\cdot,u)$ is locally bounded, as well as yields the same result for $F(\cdot,\cdot,u)$. Consequently, $E$ emerges as locally bounded and weakly analytic. Coupled with Lemma \ref{lemma:analytic}, this implies the analyticity of $E$ within $U_1\times U_2$. Utilizing the implicit function theorem, we establish the well-posedness of \eqref{equ:Quasilinear_equation}.

\textbf{Step 3.} We next deduce that $E(0;0,0,0)=0$. We compute the Fr\'echet derivative of $E$ with respect to $u$ evaluated at 0:
\begin{align*}
    E_u(0;0,0,0)v = (\partial_t^2 v-a(x)\Delta_{g}v-c_1(t,x)v,v|_{\Sigma},v(0,x),\partial_t v(0,x)),\,\,\, \forall v\in U_1.
\end{align*}
Hence, the operator $E_u(0;0,0,0)$ is an isomorphism if and only if the subsequent system possesses a unique solution $v\in E^{m+1}$:
\begin{equation*}
\left\{
    \begin{aligned}
    &\partial_t^2 v-a(x)\Delta_{g}v+c_1(t,x)v=p(t,x),\ &&\text{on}\ \mathcal{M},\\
        &v(t,x)=h,\ &&\text{on}\ \Sigma,\\
        &v(0,x)=\varphi(x),\ \partial_t v(0,x) = \psi(x),&&\text{on}\ M,
\end{aligned}\right.
\end{equation*}
for any $p\in U_0$ and $(h,\varphi,\psi)\in U_1$. The well-posedness of the above system can be inferred from Lemma~\ref{lemma: wellposed linear hyperbolic equation}. Hence, we know that $E_u(0;0,0,0)$ forms an isomorphism.

Given the analyticity of $E$ (as verified in Step 2) and the fact that $E_u(0;0,0,0)$ is a linear isomorphism, we can deduce from Lemma \ref{lemma:analytic} the existence of a positive constant $\epsilon_0$ such that
\[
    \mathcal{N} = \{(h,\varphi,\psi)\in U_1 : \|h\|_{H^{m+1}(\Sigma)}+\|\varphi\|_{H^m(M)}+\|\psi\|_{H^m(M)}<\epsilon_0\},
\]
and a $C^{\infty}$  Fr\'echet differentiable mapping $\mathcal{P}:U_1\to U_2$ such that
\[
E(\mathcal{P}(h,\varphi,\psi);h,\varphi,\psi)=(0,0,0,0)
\]
with the energy estimate of $u=\mathcal{P}(h,\varphi,\psi)$ as
\[
    \|u\|_{E^{m+1}}+\|\partial_{\nu} u\|_{H^m(\Sigma)}\leq C\epsilon_0,
\]
where $C$ is a positive constant independent of $u$, $h$, $\varphi$ and $\psi$.

The proof is complete.

\end{proof}

\section{Statement of main unique identifiability results for the inverse problem}\label{sect:3}


We introduce some preliminary knowledge on geodesics for the subsequent use.
Let $SM=\{(y,v)\in TM:y\in M, v\in T_{y}M, |v|=1\}$, where $TM$ denotes the tangent bundle and $T_{y}M$ represents the tangent space at $y$. For any $(y,v)\in SM$, we define $\gamma(\cdot,y,v)=\exp_{y}(v)$ as the unit speed geodesic starting at the point $y$ in the direction $v$.
Following \cite{FIKO21}, we define
\begin{align*}
    & t_{exit}(y,v):=\inf\{t>0:\gamma(t,y,v)\in \partial M,\dot{\gamma}(t,y,v)\notin T_{\gamma({t,y,v})}\partial M\},\\
    & \partial_{-}{SM}:=\{(y,v)\in SM:y\in\partial M,\langle v,\nu(y)\rangle_g<0,t_{exit}(y,v)<\infty \},
\end{align*}
where $\dot{\gamma}$ represents the derivative of $\gamma$ with respect to $t$ and $\nu$ signifies the outward unit normal vector at $y\in\partial M$. For simplicity, we denote $t_{exit}(y,v)$ as $t_{exit}$ in short. Then the maximal interval is given by $I:=(0,t_{exit})$.

For any $f\in C^{\infty}(\mathcal{M})$, we define $\nabla^{\overline{g}}{f}=\overline{g}^{ij}\partial_{x_j}f$. A curve $\alpha \in \mathcal{M}$ is called a null geodesic if $\langle\nabla^{\overline{g}}f,\dot{\alpha}\rangle_{\overline{g}}=0$ and $\langle \dot{\alpha},\dot{\alpha} \rangle_{\overline{g}}=0$, where $\langle\cdot,\cdot\rangle_{\overline{g}}$ denotes the inner product on $\mathcal{M}$ induced by the metric $\overline{g}$. Utilizing the product structure of $\mathcal{M}$, we can parameterize maximal null geodesics as $\alpha(t)=(t_0+t,\gamma(t))$ for $t\in I$, where $t_0\in \mathbb{R}$ and $\gamma$ represents a unit-speed maximal geodesic in $M$ (cf.\cite{Car17}). For notational convenience, we assume $t_0=0$.

For any $x\in M$, we define $D_g(x):=\sup\{t_{exit}(y,v):(y,v)\in \partial_{-}SM, x\in \gamma(\cdot;y,v)\}$, and let $D_g(M):=\sup\{D_g(x)|x\in M\}$. For $T>2D_g(M)$, we define
\begin{equation}
\label{equ:def_E}
    \mathcal{E}:=\{(t,x)\in \mathcal{M}:D_g(x)<t<T-D_g(x)\}.
\end{equation}

Next, we introduce two geometric assumptions which are necessary for our inverse problem study, and we also refer to \cite{DE17,WZ19} for related background discussion on these two technical conditions. Define the diameter of $M$ as diam$(M)$:
\begin{align*}
    \mbox{diam}(M) = \sup\limits_{p,q\in M}\left\{d_{\gamma}:\gamma\text{ is a geodesic connecting } p\text{ and }q\right\},
\end{align*}
where $d_{\gamma}$ denotes the length of the geodesic $\gamma$.

\textbf{Assumption I.}~Assume that $T>2D_g(M)$ and $M$ is a compact, smooth Riemannian manifold without conjugate points. The coefficients $b_n$ in \eqref{eq:mam1} and $c_k$ in \eqref{eq:g ck} for $k\geq 2$ in Definition~\ref{def:adm con} depend solely on the spatial variable $x$, and the support of the coefficient $c_1$ is contained in a subset $\mathcal{E}\Subset M$.

\textbf{Assumption II.} Assume $T>2\mbox{diam}(M)$, $\partial M$ is strictly convex with respect to $g$, and at least one of the following conditions holds:
\begin{itemize}
    \item [1.] $(M,g)$ has no conjugate points;
    \item [2.] $(M,g)$ satisfies the foliation condition.
\end{itemize}

The Assumption I will be necessary to establish the unique identifiability result via the ray transform, whereas Assumption II will be required for inversion using the stationary phase techniques. 

We are in a position present the main unique identifiability results for the inverse problem \eqref{eq:ip1} with admissible $a, F, G$ and $\varphi, \psi$. We will distinguish between two cases whether $(\varphi, \psi)\equiv (0, 0)$ or not. If $(\varphi, \psi)\equiv (0, 0)$, we simply write $\Lambda_{a, F, G, \varphi, \psi}$ as $\Lambda_{a, F, G}$ and in such a case, $\Lambda_{a, F, G}(h)$ encodes the measurement data generated by actively injecting the boundary input $h$. Hence, it is referred to as the active measurement in the literature. If $(\varphi, \psi)\equiv\hspace*{-3.5mm}\backslash\ (0, 0)$, as the unknown sources, they will also generate wave signals for the boundary observer to passively receive. Hence in such a case, $\Lambda_{a, F, G, \varphi, \psi}(h)$ encodes both the passive measurement data generated by the unknown sources $\varphi$ and $\psi$, and the active measurement generated by the boundary input $h$. It is noted that inverse problems with passive measurements constitute a challenging and longstanding topic in the field of inverse problems (cf. \cite{L4}).

\begin{Theorem}
\label{Theo:IP11}
Let $(a_j, F_j, G_j)$, $j=1,2$, be two admissible configurations. Suppose that Assumption I holds. If
\begin{equation}\label{eq:uu1}
    \Lambda_{a_1,F_1,G_1}(h) = \Lambda_{a_2,F_2,G_2}(h),\, \forall\, h\in \mathcal{N},
\end{equation}
where $\mathcal{N}$ is defined in \eqref{eq:setn1}, then one has
\begin{equation}\label{equ:FG}
    (a_1,F_1, G_1)=(a_2,F_2, G_2).
\end{equation}
That is, the hyperbolic DtN map $\Lambda_{a,F,G}$ uniquely determines $(a,F,G)$.
\end{Theorem}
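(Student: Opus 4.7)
The plan is to combine higher-order linearization of the DtN map with Gaussian beam asymptotics and inversion of the light-ray transform associated to $(\mathcal{M},\overline g)$. Fix $N\ge 1$ and inputs $f_1,\ldots,f_N\in\mathcal{H}^{m+1}$ satisfying the compatibility conditions, and consider $h=\sum_{i=1}^N\epsilon_i f_i$ with $(\varphi,\psi)=(0,0)$. By Theorem~\ref{theorem:local well-posedness} and the analyticity established in its proof, the solutions $u^{(j)}(\epsilon;t,x)$ corresponding to $(a_j,F_j,G_j)$ depend analytically on $\epsilon$ near the origin; hence \eqref{eq:uu1} transfers to every mixed partial derivative at $\epsilon=0$. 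Writing $w_k^{(j)}:=\partial_{\epsilon_1}\cdots\partial_{\epsilon_k}u^{(j)}\big|_{\epsilon=0}$, we obtain that $w_k^{(1)}$ and $w_k^{(2)}$ share the same Dirichlet trace, vanishing Cauchy data, and identical conormal trace on $\Sigma$ for every $k\ge 1$.

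At $k=1$ the monomial structure $F_j=b_{n,j}(x)y^n$ with $n\ge 2$ kills the $F$-contribution and leaves $\partial_t^2 v^{(j)}=a_j(x)\Delta_g v^{(j)}+c_{1,j}(x)v^{(j)}$. A principal-symbol analysis of the linearized hyperbolic DtN map recovers the wave speed and yields $a_1=a_2=:a$. With $a$ identified, the difference $v^{(1)}-v^{(2)}$ solves a linear wave equation driven by $(c_{1,1}-c_{1,2})v^{(2)}$ with vanishing Cauchy data and vanishing Dirichlet and Neumann traces on $\Sigma$; pairing against a test Gaussian beam $\tilde v$ concentrated along a null geodesic $\alpha(t)=(t,\gamma(t))$ of $(\mathcal{M},\overline g)$, integrating by parts, and applying stationary phase produces, in the high-frequency limit, the light-ray transform of $c_{1,1}-c_{1,2}$ along $\alpha$. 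Under Assumption~I ($T>2D_g(M)$, absence of conjugate points, and $c_{1,j}$ supported inside $\mathcal{E}$) this transform is injective on the relevant class, so $c_{1,1}=c_{1,2}$.

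For $k\ge 2$ we induct: assume all $c_\ell$ with $\ell<k$ and (if $n<k$) $b_n$ have already been shown equal, so $w_\ell^{(1)}=w_\ell^{(2)}$ for $\ell<k$. Differentiating \eqref{equ:Quasilinear_equation} $k$ times and applying Fa\`a di Bruno's formula for the polynomial $F$ and the analytic $G$ yields
\begin{equation*}
(\partial_t^2-a\Delta_g-c_1)w_k^{(j)}=c_{k,j}\,v_1\cdots v_k+\mathbf{1}_{k=n}\,n!\,a\,\Delta_g\bigl(b_{n,j}\,v_1\cdots v_n\bigr)+\mathcal{R}_k,
\end{equation*}
where $\mathcal{R}_k$ is built solely from previously identified coefficients and the lower-order $w_\ell^{(j)}$, hence is common to $j=1,2$. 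Choosing the inputs $f_i$ so that each $v_i$ is itself a Gaussian beam along a prescribed null geodesic, subtracting the $j=1,2$ equations, testing against another Gaussian beam $\tilde v$, and applying stationary phase reduces the boundary identity to the vanishing of a light-ray transform of $c_{k,1}-c_{k,2}$ when $k\ne n$, which Assumption~I inverts. The critical case $k=n$ is the main technical obstacle: both $F$ and $G$ contribute at the same multilinear order, through the second-order operator $a\Delta_g$ and through multiplication, respectively. They are decoupled by the observation that $a\,\Delta_g(b_n\prod_i v_i)$ generates, after stationary phase, a light-ray transform of $b_{n,1}-b_{n,2}$ carrying a geometric weight built from the eikonal phase factors of the interacting beams, whereas $c_n\prod_i v_i$ contributes the unweighted transform of $c_{n,1}-c_{n,2}$; varying the null geodesic configuration and using the freedom in the Gaussian beam amplitudes produces sufficiently many independent identities to invert both transforms simultaneously under Assumption~I, delivering $b_{n,1}=b_{n,2}$ and $c_{n,1}=c_{n,2}$ and closing the induction to establish \eqref{equ:FG}.
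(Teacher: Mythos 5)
Your overall architecture (higher-order linearization, Gaussian beams concentrated on null geodesics, injectivity of the light-ray transform under Assumption~I, induction on the order $k$) matches the paper's, but two steps have genuine gaps. First, you recover $a_1=a_2$ by ``a principal-symbol analysis of the linearized hyperbolic DtN map.'' The principal symbol of the DtN map only determines boundary jets of the wave speed, not its interior values; interior determination requires either boundary-control machinery or, as the paper does, extracting the light-ray transform of $1-a_1/a_2$ from the \emph{leading $\tau^2$-order term} of the Gaussian-beam pairing $\int_{\mathcal M}(a_2-a_1)w\,\Delta u_1^{(1)}\,dV$ in \eqref{equ: first ordert integral}: the $\Delta$ hitting $e^{\mathrm i\tau\phi}$ contributes the factor $\tau^2\langle d\phi,d\phi\rangle$, so dividing by $\tau^2$ isolates $a$ before the $O(1)$ term yields $c_1$. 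As stated, your step for $a$ does not close.

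Second, at the critical order $k=n$ you correctly identify that $a\Delta_g(b_n\prod_i v_i)$ and $c_n\prod_i v_i$ enter the same integral identity, but your proposed decoupling --- ``varying the null geodesic configuration and using the freedom in the Gaussian beam amplitudes produces sufficiently many independent identities to invert both transforms simultaneously'' --- is not a mechanism; nothing guarantees such a family of identities separates the two unknowns. The actual decoupling in the paper is again asymptotic in $\tau$: with all beams on the same null geodesic and phases chosen proportional ($\phi_1^{(1)}=\phi_1^{(2)}=\phi_2/2$), the $\Delta_g$ term carries the weight $\tau^2\langle d\phi_2,d\overline{\phi_2}\rangle_{\overline g}>0$ while the $c_n$ term is $O(1)$, so $\lim_{\tau\to\infty}\tau^{-2}L_2$ sees only $b_n$ (as a weighted ray transform $\mathcal J^{(1)}_{\widetilde Y}$), after which the $O(1)$ remainder determines $c_n$. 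Relatedly, multiplying $k$ beam amplitudes introduces the weight $|\det\widetilde Y|^{-1}(\det\widetilde Y)^{-k/2}$ into the resulting transform; its injectivity is not the standard light-ray transform result and requires a separate argument (the paper's Lemma~\ref{ray_transform}), which your proposal does not address.
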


\begin{Theorem}
\label{Theo:IP12}
Let $(a, F_j, G_j)$, $j=1,2$, be two admissible configurations. Suppose that Assumption II holds and the coefficient $c_1$ in \eqref{eq:g ck} as well as $a$ are a-priori given. If
\begin{equation}\label{eq:uu2}
     \Lambda_{a,F_1,G_1}(h) = \Lambda_{a,F_2,G_2}(h),\, \forall\, h\in \mathcal{N},
\end{equation}
then one has
\begin{equation}\label{eq:FG2}
(F_1, G_1)=(F_2, G_2). 
\end{equation}
That is, the hyperbolic DtN map $\Lambda_{a,F,G}$ uniquely determines $(F, G)$ if $a$ is a priori given. Moreover, $F$ can be uniquely reconstructed according to the formulas \eqref{equ:second order phase stationary lemma}--\eqref{eq:rb1}, and $G$ can be uniquely reconstructed according to the formulas \eqref{eq:rc1} --\eqref{equ:second order integral phase}. 
\end{Theorem}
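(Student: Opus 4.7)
The plan is to combine higher-order linearizations of the forward map with Gaussian beam solutions of the linearized equation, then apply stationary phase to read off the unknown coefficients pointwise. Since $a$ and $c_1$ are common to the two configurations, the linearized operator $L := \partial_t^2 - a(x)\Delta_g - c_1(x)$ is the same for $(F_1,G_1)$ and $(F_2,G_2)$, so a single family of test beams works for both. Feeding boundary data $h = \sum_{j=1}^{k+1}\epsilon_j h_j$ into \eqref{equ:Quasilinear_equation}, Theorem~\ref{theorem:local well-posedness} produces a smooth family $u(\epsilon)$ near $\epsilon = 0$. The first-order terms $v_j := \partial_{\epsilon_j} u|_{\epsilon = 0}$ satisfy $L v_j = 0$ with Dirichlet data $h_j$ (note that $F'(x,0) = 0$ since $n \ge 2$), while the mixed derivative $v^{(k)} := \partial_{\epsilon_1}\cdots\partial_{\epsilon_k} u|_{\epsilon = 0}$ solves $L v^{(k)} = S_k$ with an explicit source built from $v_1,\ldots,v_k$ and the $k$-th $u$-derivatives of $F,G$. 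Concretely, the new coefficient appearing in $S_k$ is $c_k(x)\,v_1\cdots v_k$ when $k \neq n$, while at $k = n$ the quasilinear term $n!\,a(x)\,\Delta_g\bigl(b_n(x)\,v_1\cdots v_n\bigr)$ joins $c_n(x)\,v_1\cdots v_n$. Subtracting the two configurations and invoking $\Lambda_{a,F_1,G_1} = \Lambda_{a,F_2,G_2}$, the Dirichlet and Neumann traces of $v^{(k)}_1 - v^{(k)}_2$ vanish on $\Sigma$, so pairing against a test solution $w$ of $Lw = 0$ (chosen with zero terminal data at $t=T$) and integrating by parts yields, inductively in $k$, the orthogonality identity
\begin{equation*}
    \int_{\mathcal{M}} \bigl(S_{k,1}-S_{k,2}\bigr)\, w\, dV = 0.
\end{equation*}

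Fix an arbitrary interior $x_0 \in M$. Assumption~II (strict convexity of $\partial M$, plus either absence of conjugate points or the foliation condition) supplies $k+1$ null geodesics through a common lift $(t_0,x_0) \in \mathcal{E}$ whose spatial projections are pairwise transversal at $x_0$. Along each geodesic I construct a Gaussian beam $v_j = e^{i\tau\phi_j}\mathfrak{a}_j + r_j$ solving $Lv_j = 0$ via the standard eikonal/transport procedure, concentrated in a $\tau^{-1/2}$-tube with remainder $r_j = O(\tau^{-N})$ in $E^{m+1}$; a uniform small rescaling places the boundary traces inside the admissible set $\mathcal{N}$. Substituting these beams into the orthogonality identity and letting $\tau \to \infty$, the total phase $\Phi_{k+1} = \phi_1 + \cdots + \phi_{k+1}$ has a unique nondegenerate critical point at $(t_0,x_0)$, and stationary phase extracts the leading coefficient in closed form. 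For $2 \le k < n$ this gives $(c_{k,1}-c_{k,2})(x_0)\cdot C(x_0) = 0$ with an explicit nonzero factor $C(x_0)$, whence $c_{k,1}\equiv c_{k,2}$ since $x_0$ is arbitrary, after which the induction moves on to order $k+1$.

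At the critical order $k = n$ both $b_n$ and $c_n$ contribute simultaneously, and here the Laplacian acting on the oscillating product $v_1\cdots v_n$ generates an extra factor $-\tau^2\lvert\nabla^g(\phi_1+\cdots+\phi_n)\rvert^2$. At the stationary point of the full phase $\sum_{j=1}^{n+1}\phi_j$, this spatial gradient equals $-\nabla^g\phi_{n+1}$, which is nonzero since the $(n+1)$-th null geodesic has nonvanishing spatial velocity. Consequently the two contributions separate by growth rate in $\tau$: the $\tau^2$-coefficient of the stationary-phase expansion isolates $(b_{n,1}-b_{n,2})(x_0)$, and once this now-known quantity is subtracted the $\tau^0$-coefficient isolates $(c_{n,1}-c_{n,2})(x_0)$. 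The explicit reconstruction formulas \eqref{equ:second order phase stationary lemma}--\eqref{eq:rb1} for $b_n$ and \eqref{eq:rc1}--\eqref{equ:second order integral phase} for $c_n$ are precisely these two stationary-phase coefficients evaluated at a generic $x_0 \in M$. This order-$n$ disentanglement is the hardest step: it demands choosing the Gaussian beam phases and amplitudes so that the $\tau^2$-coefficient in front of $b_n$ is genuinely nonzero for a suitable transverse configuration of null geodesics, and balancing the amplitude rescaling against the high-frequency limit so that Theorem~\ref{theorem:local well-posedness} still applies to every datum in the family while the stationary-phase asymptotic remains nondegenerate.
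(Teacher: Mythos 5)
Your overall strategy --- successive linearization, an integration-by-parts identity pairing the $k$-th order source against a backward solution $w$ of the common linearized operator $L=\partial_t^2-a\Delta_g-c_1$, Gaussian beams concentrated on null geodesics meeting at one point, and separation of $b_n$ from $c_n$ by the extra $\tau^2$ produced when $\Delta_g$ hits the oscillatory product --- is exactly the paper's.

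There is, however, one concrete gap in your phase configuration. You take the total phase to be $\Phi_{k+1}=\phi_1+\cdots+\phi_{k+1}$ with unit coefficients and assert a nondegenerate critical point at $(t_0,x_0)$. Each Gaussian-beam phase satisfies $d\phi_j|_{\alpha_j}=\dot\alpha_j^\flat$ with $\dot\alpha_j$ a future-pointing null vector of the form $(1,\zeta_j)$, $|\zeta_j|_g=1$; hence the $dt$-component of $d\Phi_{k+1}(p)$ equals $k+1$ (or $k-1$ if the conjugated phase of $w$ enters with a minus sign), which is nonzero for every $k\ge 2$. So $\Phi_{k+1}$ as written has \emph{no} stationary point, and the stationary-phase lemma extracts nothing. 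The paper repairs this with Lemma~\ref{lemma:4.6}: one chooses real weights $\kappa^{(0)},\kappa^{(1)}>0$ and $\kappa^{(2)},\kappa^{(3)}<0$ with $\sum_i\kappa^{(i)}=0$ and $\sum_i\kappa^{(i)}\zeta^{(i)}=0$, builds the beams with the scaled phases $\kappa_1^{(j)}\phi_1^{(j)}$ and $-\kappa_2\overline{\phi_2}$ so that \eqref{equ:linear correlation} forces $d\Phi(p)=0$ (see \eqref{equ:definition_xi_1}), and for $k\ge 3$ distributes the weight $\kappa^{(1)}$ over several beams supported on the \emph{same} second geodesic rather than demanding $k+1$ distinct transversal null geodesics through $p$ with unit weights, a configuration that need not exist. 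Your remaining steps --- the $\tau^2$ versus $\tau^0$ splitting, the nonvanishing of the spatial gradient factor at the critical point, and reading off $b_n$ and then $c_n$ from the two stationary-phase coefficients --- match the paper's \eqref{equ:second order phase stationary lemma}--\eqref{eq:rc2}, and once the weighted phase is used the argument goes through.
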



\begin{Theorem}
\label{Theo:IP21}
Let $(a_j, F_j, G_j, \varphi_j, \psi_j)$, $j=1,2$, be two admissible configurations. Suppose that Assumption I holds. If
\begin{equation}\label{eq:uu3}
    \Lambda_{a_1, F_1, G_1, \varphi_1,\psi_1}(h) = \Lambda_{a_2, F_2, G_2, \varphi_2,\psi_2}(h),\, \forall\, h\in \mathcal{H}^{m+1}(\Sigma),
\end{equation}
then one has
\begin{equation}\label{equ:FGphi}
    (a_1, F_1, G_1, \varphi_1,\psi_1)=(a_2, F_2, G_2, \varphi_2,\psi_2).
\end{equation}
That is, the hyperbolic DtN map $\Lambda_{\varphi,\psi,a,F,G}$ uniquely determines $(a, F, G, \varphi,\psi)$.
\end{Theorem}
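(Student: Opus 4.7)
The plan is to reduce Theorem~\ref{Theo:IP21} to the active-measurement analysis of Theorem~\ref{Theo:IP11} by first extracting information about the passive solutions induced by $(\varphi_j,\psi_j)$, and then running the Gaussian-beam/light-ray-transform machinery order-by-order in a perturbation expansion of the DtN map at $h=0$. Let $v_j$ denote the solution of \eqref{equ:Quasilinear_equation} with vanishing boundary data and initial data $(\varphi_j,\psi_j)$ under coefficients $(a_j,F_j,G_j)$. Setting $h=0$ in \eqref{eq:uu3} and using that $F_j(x,y)=b_n^{(j)}(x)y^n$ with $n\ge 2$ forces $F_j(x,v_j)|_\Sigma=0$ and $\partial_{\nu_g}F_j(x,v_j)|_\Sigma=0$ whenever $v_j|_\Sigma=0$, I immediately obtain $\partial_{\nu_g}v_1|_\Sigma=\partial_{\nu_g}v_2|_\Sigma$, so the two passive solutions share the same lateral Cauchy data.

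I would then feed in perturbations $h=\epsilon h_0$ for small $\epsilon$, with $h_0\in\mathcal{H}^{m+1}(\Sigma)$ chosen compatible with both configurations, and expand, using Theorem~\ref{theorem:local well-posedness},
\begin{equation*}
u_j[\epsilon h_0]=v_j+\epsilon w_j^{(1)}+\epsilon^2 w_j^{(2)}+\cdots,
\end{equation*}
where each $w_j^{(k)}$ solves a linear hyperbolic equation whose principal part is $\partial_t^2-a_j\Delta_g$ and whose lower-order coefficients are built from $v_j$, $F_j$, $G_j$ via Taylor expansion at $u=v_j$. Matching the Gateaux derivatives $D^k\Lambda_j|_{h=0}$ for the two configurations produces linear identities for the Neumann traces of the $w_j^{(k)}$ on $\Sigma$. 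The first-order ($k=1$) identity is a linear inverse problem to which the Gaussian-beam construction and light-ray-transform arguments developed in the proof of Theorem~\ref{Theo:IP11} apply: the principal symbol pins down $a_1=a_2=:a$, and (on $\mathcal{E}$) the linear coefficient $c_1$ is identified via the injectivity of the light-ray transform guaranteed by Assumption~I.

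With $a_1=a_2$ and the shared boundary Cauchy data of $v_1,v_2$ on $\Sigma$, I would next invoke a hyperbolic uniqueness/observability argument---using $T>2D_g(M)$ together with the no-conjugate-points hypothesis, applied to the linearization of \eqref{equ:Quasilinear_equation} around a suitable interpolation between $v_1$ and $v_2$---to conclude $v_1\equiv v_2$ in $\mathcal{M}$, whence $\varphi_1=\varphi_2$ and $\psi_1=\psi_2$. Once the background $v:=v_1=v_2$ is common, the higher-order linearizations $w^{(k)}$, $k\ge 2$, carry the Taylor coefficients of $F_j,G_j$ at $u=v$ as explicit polynomial weights in the now-known $v$; a straightforward induction using the $k$-th Gateaux derivative together with the light-ray-transform step of Theorem~\ref{Theo:IP11} then recovers $b_n$ first (from the $n$-th derivative, whose leading dependence on the coefficient is independent of $v$), and each $c_k$ for $k\ge 2$ thereafter.

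The main obstacle I anticipate is disentangling, in the linearized inverse problems, the contribution of the unknown background $v_j$ from that of the unknown coefficients $(a_j,F_j,G_j)$: a naive comparison of $D\Lambda_j|_{h=0}$ exposes both simultaneously. This is resolved precisely by the ordering above---principal symbol first (yielding $a$), boundary-determination and unique continuation for $v_j$ second (yielding $\varphi,\psi$), and the coefficients of $F,G$ last via Gaussian beams concentrated along null geodesics whose footprints can be localized inside $\mathcal{E}$ under Assumption~I, so that the weight $v^{n-k}$ (respectively $v^{k}$) is genuinely probed along rays where the light-ray transform is injective.
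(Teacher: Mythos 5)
Your overall strategy---linearize around the passive solutions, recover coefficients by Gaussian beams and ray transforms, and recover the initial data by an observability-type argument---is in the same spirit as the paper's, but your ordering creates a genuine circularity that the paper's proof is specifically arranged to avoid. You propose to conclude $v_1\equiv v_2$ (hence $\varphi_1=\varphi_2$, $\psi_1=\psi_2$) from the shared lateral Cauchy data after having identified only $a$ and $c_1$, and \emph{before} identifying $b_n$ and $c_k$, $k\ge 2$. At that stage the difference $v_1-v_2$ does not satisfy a homogeneous linear hyperbolic equation: writing $F_1(x,v_1)-F_2(x,v_2)=K_1\cdot(v_1-v_2)+(b_n^{(1)}-b_n^{(2)})v_2^{\,n}$ (and similarly for $G$), one is left with source terms proportional to the as-yet-unknown coefficient differences, so zero Cauchy data on $\Sigma$ yields no conclusion about $v_1-v_2$. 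Your subsequent recovery of $b_n$ and the $c_k$ in turn presupposes $v_1=v_2$ (the ``explicit polynomial weights in the now-known $v$''), so the argument closes on itself. The paper breaks the circle by recovering \emph{all} of $a,F,G$ first---arguing that the linearizations $\widetilde u^{(i)}$ of $u$ about the passive solution obey the same linear systems as in the zero-initial-data case, so the machinery of Theorems \ref{Theo:IP11}--\ref{Theo:IP12} applies verbatim---and only then forms $v=u_1-u_2$ for the now-common $(a,F,G)$, which satisfies a homogeneous linear equation with bounded difference-quotient coefficients $K_1,K_2$, zero lateral Cauchy data, and initial data $(\varphi_1-\varphi_2,\psi_1-\psi_2)$; the observability inequality of \cite{DZZ08} then finishes.

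A second, related defect: you assert that the first-order linearization around $v_j$ has principal part $\partial_t^2-a_j\Delta_g$ with the background entering only through lower-order coefficients. It does not: since $F$ sits \emph{inside} the Laplacian, the linearized operator is $\partial_t^2-a_j\bigl(1+n\,b_n^{(j)}v_j^{\,n-1}\bigr)\Delta_g+\cdots$, whose leading coefficient is time-dependent and entangled with the unknown background. This undercuts both the claim that ``the principal symbol pins down $a_1=a_2$'' and the direct appeal to the Gaussian-beam/light-ray apparatus, which in this paper is built on the static product metric $-dt^2+a\,g$ and time-independent null geodesics $\alpha(t)=(t,\gamma(t))$. (The paper's own handling of this point is terse---it simply asserts that $\widetilde u^{(i)}$ satisfies the unperturbed linear equation---but its ordering at least does not make the initial-data step hinge on resolving it.) To repair your argument you would need either to justify discarding the $v_j$-dependent terms in the linearizations, as the paper implicitly does, or to postpone the determination of $(\varphi,\psi)$ until after all coefficients are known.
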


\begin{Theorem}
\label{Theo:IP22}
Let $(a, F_j, G_j, \varphi_j, \psi_j)$, $j=1,2$, be two admissible configurations. Suppose that Assumption II holds and the coefficient $c_1$ in \eqref{eq:g ck} as well as $a$ are a-priori given. If
\begin{equation}\label{eq:uu4}
    \Lambda_{a, F_1, G_1, \varphi_1,\psi_1}(h) = \Lambda_{a, F_2, G_2, \varphi_2,\psi_2}(h),\, \forall\, h\in \mathcal{H}^{m+1}(\Sigma),
\end{equation}
then one has
\begin{equation}\label{eq:FGphi2}
(F_1, G_1, \varphi_1, \psi_1)=(F_2, G_2, \varphi_2, \psi_2). 
\end{equation}
Moreover, $F$ and $G$ can be uniquely reconstructed by using the same reconstruction formulas in Theorem~\ref{Theo:IP21}. 
\end{Theorem}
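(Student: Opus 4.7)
The proof splits naturally into two phases. In the first phase I recover the nonlinear coefficients $F$ and $G$ by adapting the Gaussian-beam and stationary-phase machinery of Theorem~\ref{Theo:IP12}; in the second phase I recover the initial data $(\varphi,\psi)$ via a boundary observability argument for the linear wave operator $\partial_t^2-a\Delta_g-c_1$, which is fully known by hypothesis. The key new feature relative to Theorem~\ref{Theo:IP12} is that the natural ``background'' around which one linearizes is no longer the zero solution but rather the passive solution $u_0^{(j)}$ driven by $(\varphi_j,\psi_j)$; the key new feature relative to Theorem~\ref{Theo:IP21} is that Assumption~II replaces Assumption~I, so that the light-ray-transform step is replaced by a stationary-phase/observability argument made possible by the a priori knowledge of $a$ and $c_1$.

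\textbf{Recovery of $F$ and $G$.} I would probe the DtN map with multi-parameter boundary data $h=\sum_{i=1}^{N}\epsilon_i h_i$ for admissible $h_i\in\mathcal{H}^{m+1}$ and small real parameters $\epsilon_i$. By the analyticity of the solution map established in Theorem~\ref{theorem:local well-posedness}, both the solution and the DtN output expand in power series in $\epsilon$, and taking successive mixed partials $\partial_{\epsilon_{i_1}}\cdots\partial_{\epsilon_{i_\ell}}|_{\epsilon=0}$ extracts polylinear forms in $(h_{i_1},\ldots,h_{i_\ell})$. I then choose the $h_i$ so that the associated first-order linearization $v_i$, which solves the linearization of \eqref{equ:Quasilinear_equation} around the passive background $u_0^{(j)}$ with boundary data $h_i$, is a Gaussian beam concentrated along a null geodesic in $\mathcal{M}$, and apply stationary phase to the resulting multilinear forms exactly as in Theorem~\ref{Theo:IP12}. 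Because $u_0^{(j)}$ is smooth and small in $E^{m+1}$ (controlled by $\epsilon_0$), it contributes only subleading terms in the beam asymptotics, and the leading-order reconstruction recovers the coefficient $b_n$ of $F$ and the coefficients $c_k$ ($k\ge 2$) of $G$ via the same formulas \eqref{equ:second order phase stationary lemma}--\eqref{eq:rb1} and \eqref{eq:rc1}--\eqref{equ:second order integral phase}. This yields $F_1=F_2$ and $G_1=G_2$.

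\textbf{Recovery of $(\varphi,\psi)$.} Once $F,G$ are known to be equal, set $h\equiv 0$ in the DtN identity. Since $u_0^{(j)}|_\Sigma=0$ and $F_j(x,y)=b_n(x)y^n$ with $n\ge 2$, every term of $\partial_{\nu_g}F_j(x,u_0^{(j)})|_\Sigma$ contains a factor of $u_0^{(j)}|_\Sigma=0$ and therefore vanishes, so the DtN equality at $h\equiv 0$ reduces to $\partial_{\nu_g}u_0^{(1)}|_\Sigma=\partial_{\nu_g}u_0^{(2)}|_\Sigma$. Setting $w=u_0^{(1)}-u_0^{(2)}$, a direct subtraction produces
\begin{equation*}
\partial_t^2 w - a(x)\Delta_g w - c_1(x)\,w = \mathcal{R}(w,u_0^{(1)},u_0^{(2)}),\quad w|_\Sigma=0,\quad \partial_{\nu_g}w|_\Sigma=0,
\end{equation*}
with initial data $(\varphi_1-\varphi_2,\psi_1-\psi_2)$ and a nonlinear remainder $\mathcal{R}$ bounded by $C\epsilon_0\|w\|_{E^{m+1}}$. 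Under Assumption~II, the classical boundary observability inequality $\|(w(0),\partial_t w(0))\|_{H^1(M)\times L^2(M)}\le C\|\partial_{\nu_g}w\|_{L^2(\Sigma)}$ holds for solutions of $\partial_t^2-a\Delta_g-c_1$ with vanishing Dirichlet trace; combining this with a Duhamel/Gronwall estimate that absorbs $\mathcal{R}$ when $\epsilon_0$ is sufficiently small yields $w\equiv 0$, and therefore $(\varphi_1,\psi_1)=(\varphi_2,\psi_2)$.

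\textbf{Main obstacle.} The technically delicate step is the second paragraph: extracting the Taylor coefficients of $F$ and $G$ in the presence of the nonzero smooth background $u_0^{(j)}$. Unlike in Theorem~\ref{Theo:IP12}, where the first linearization is exactly $\partial_t^2-a\Delta_g-c_1$, here the linearization around $u_0^{(j)}$ carries additional $u_0^{(j)}$-dependent corrections entering through $\partial_u F(x,u_0^{(j)})$ and $\partial_u G(x,u_0^{(j)})$. Quantifying that these corrections contribute only subleading terms in the Gaussian-beam/stationary-phase asymptotics, so that the leading-order reconstruction formulas from Theorem~\ref{Theo:IP12} apply without modification, is the central technical hurdle and is what forces $\epsilon_0$ to be chosen appropriately small.
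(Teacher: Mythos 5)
Your two-phase strategy (recover $F,G$ by linearizing around the passive background driven by $(\varphi_j,\psi_j)$ and running the stationary-phase argument of Theorem~\ref{Theo:IP12}, then recover $(\varphi,\psi)$ by boundary observability) is exactly the paper's. The genuine gap is the step you yourself flag as the ``main obstacle,'' and the fix you propose would not close it. Differentiating the solution map at $\epsilon=0$ linearizes the equation around the passive solution $\widetilde u$ generated by $(\varphi_j,\psi_j)$, and the resulting equation for $\widetilde u^{(i)}$ carries the extra terms $a\Delta_g\bigl(\partial_uF(x,\widetilde u)\,\widetilde u^{(i)}\bigr)$ and $\partial_uG(x,\widetilde u)\,\widetilde u^{(i)}$. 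The first of these perturbs the \emph{principal part}: inserting a beam $\widetilde u^{(i)}\sim Ae^{\mathrm{i}\tau\phi}$ built on a null geodesic of $-dt^2+a\,g$ produces a term of size $\tau^2\,a\,\partial_uF(x,\widetilde u)\,\langle d\phi,d\phi\rangle_g\,Ae^{\mathrm{i}\tau\phi}$, and along the beam $a\langle d\phi,d\phi\rangle_g=(\partial_t\phi)^2\neq 0$ (only the spacetime eikonal quantity vanishes). So this contribution enters at the same order in $\tau$ as the quantity from which $b_2$ is read off; it is not subleading in $\tau$, and the remainder estimate \eqref{equ:Ru_H1_property} fails for beams built on the unperturbed characteristics. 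Smallness in $\epsilon_0$ does not rescue this: $\epsilon_0$ is a fixed constant, so an $O(\epsilon_0)$ contamination yields $b_2+O(\epsilon_0)$, i.e.\ approximate rather than exact identifiability. The paper resolves this point differently (if tersely): it asserts that the difference quotient $(u(\cdot,\epsilon)-u(\cdot,0))/\epsilon_i$ satisfies the \emph{same} unperturbed equation $\partial_t^2v=a\Delta_gv+c_1v$ with zero initial data and boundary value $h^{(i)}$, so that the entire apparatus of Theorems~\ref{Theo:IP11} and \ref{Theo:IP12} applies verbatim. That identification of the linearized system is the statement you would actually have to prove; ``the corrections are subleading'' is not a substitute for it.

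The initial-data step also deviates from the paper in a way that matters. You keep $a\Delta_g$ as the principal operator and put the difference of the nonlinearities into a remainder $\mathcal{R}$ bounded by $C\epsilon_0\|w\|_{E^{m+1}}$; but $\mathcal{R}$ contains $a\Delta_g(K_1w)\ni aK_1\Delta_gw$, which is not controlled by the $H^1\times L^2$ energy appearing in the observability inequality. Your Gr\"onwall closure therefore delivers an estimate of the form $\|(w(0),\partial_tw(0))\|_{H^1\times L^2}\le C\epsilon_0\|w\|_{E^{m+1}}$, a norm mismatch that does not force $w\equiv 0$. The paper instead writes $v=u_1-u_2$ as an exact solution of a linear wave equation with divided-difference coefficients $K_1=(F(u_1)-F(u_2))/(u_1-u_2)$ and $K_2=(G(u_1)-G(u_2))/(u_1-u_2)$, absorbs $K_1$ into the principal part $a(1+K_1)\Delta_g$, and applies the observability inequality of \cite{DZZ08} to that operator directly, with no smallness or Gr\"onwall absorption. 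Your reduction to $h\equiv 0$ and the vanishing of $\partial_{\nu_g}F(x,u_0^{(j)})|_\Sigma$ are fine, but the absorption argument should be replaced by the divided-difference formulation.
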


\section{Proofs of Theorems~\ref{Theo:IP11}--\ref{Theo:IP22}}\label{sect:4}

In this section, we provide proofs for Theorems~\ref{Theo:IP11}$-$\ref{Theo:IP22}. To ease the exposition, we shall only deal with the case $n=2$ in \eqref{eq:mam1} since the other cases with $n>2$ can be proved by straightforwardly modifying our arguments. Throughout this section, we will abbreviate $\Lambda_{\varphi,\psi,a,F,G}$ as $\Lambda$. Before delving into the proofs of the corresponding theorems, we present two pivotal tools in our analysis including the successive linearization and the Gauss beams. 

\subsection{Successive linearization}

In the proof of Theorem\,\ref{theorem:local well-posedness}, we have already established the continuous dependence of the solution $u$ on the boundary values $h$, as well as on $\varphi$ and $\psi$. Therefore, if there are certain asymptotic parameters $\epsilon=(\epsilon_1,\cdots,\epsilon_N)$ involved in the boundary input $h$, it is clear that the corresponding solution $u$ depends on these parameters in a continuous manner. From this perspective, we can introduce the successive linearization method, a.k.a. higher-order linearization method (cf.\cite{KLU18}) to reduce the nonlinear hyperbolic equation into a series of linear ones.

Let us consider the hyperbolic system \eqref{equ:Quasilinear_equation} of the following form:
\begin{equation}
\label{equ:h_expansion}
    \left\{
    \begin{aligned}
        &\partial^2_t u = a(x)\Delta_g (u+F(t,x,u))+G(t,x,u),\ &&\text{in}\ \mathcal{M},\\
        &u(t, x)=\sum\limits_{i=1}^N\epsilon_{i}h^{(i)}(t,x),\,,\ &&\text{on}\ \Sigma=(0,T)\times \partial M,\\
        &u(0, x)=0,\ \partial_t u(0,x) = 0,&&\text{on}\ M,
    \end{aligned}
\right.
\end{equation}
where $\epsilon = (\epsilon_1,\cdots,\epsilon_N)\in\mathbb{R}^N$ consisting of the asymptotic parameters $\epsilon_j$, $j=1,2,\ldots, N$ and $h^{(j)}\in \mathcal{H}^{m+1}(\Omega)$ for $j=1,\cdots,N$.
Let $u(t,x,\epsilon)$ be the corresponding solution to \eqref{equ:h_expansion}, and let $\Lambda(h,\epsilon)$ denote the hyperbolic DtN map associated with $u(t,x,\epsilon)$; see \eqref{equ:DtN_map} for the case with general boundary inputs. 

To ensure the well-posedness of the direct problem under consideration, according to Theorem \ref{theorem:local well-posedness}, we assume that $\|\epsilon\|$ is sufficiently small such that the boundary input fulfils that
\begin{equation}\label{eq:bcinput1}
    \|h\|_{H^{m+1}(\Sigma)}\leq\sum\limits_{i=1}^N \epsilon_i\|h^{(i)}\|_{H^{m+1}(\Sigma)}<\frac{\epsilon_0}{2},
\end{equation}
where $\epsilon_0$ is the parameter introduced in Theorem \ref{theorem:local well-posedness}.

Let $u^{(i)}(t,x)$ denote the first-order linearization of $u$ in the sense that
\begin{equation}\label{eq:fol1}
u(t,x, \epsilon_i)=u(t,x,0)+u^{(i)}(t, x)\epsilon_i+o(\epsilon_i),\ \ i=1,2,\ldots, N, 
\end{equation}
{as $\epsilon_i\rightarrow 0$, where the generic constant in bounding the remainder term $o(\epsilon_i)$ depends on $\|u\|_{E^{m+1}}$}. For notational convenience, we write 
\begin{align*}
    u^{(i)}(t,x) &:= \frac{\partial}{\partial \epsilon_i} u(t,x) \Bigg|_{\epsilon_i=0} = \lim_{\epsilon_i \to 0} \frac{u(t,x,\epsilon_i) - u(t,x,0)}{\epsilon_i}, \quad i = 1, \ldots, N,
\end{align*}
and it is assumed that $u(t,x,0)=0$ in accordance with the third admissibility condition in Definition \ref{def:adm con}. Furthermore,
through direct computations, one can obtain that the first-order linearizations for the functions $F$ and $G$ are given by
\begin{equation}\label{eq:fol2}
\begin{split}
     \partial_{\epsilon_i} F(t,x,u)\big|_{\epsilon_i=0}=& \lim\limits_{\epsilon_i\to 0} \frac{F(t,x,u(t,x,\epsilon_i))-F(t,x,u(t,x,0))}{\epsilon_i}\\
    =& \frac{b_2(t,x)}{2!}\lim\limits_{\epsilon_i\to 0} \left[u(t,x,\epsilon_i)\frac{u(t,x,\epsilon_i)}{\epsilon_i}\right]
    =0,
\end{split}
\end{equation}
and
\begin{equation}\label{eq:fol3}
\begin{split}
     \partial_{\epsilon_i} G(t,x,u)\big|_{\epsilon_i=0}=& \lim\limits_{\epsilon_i\to 0} \frac{G(t,x,u(t,x,\epsilon_i))-G(t,x,u(t,x,0))}{\epsilon_i}\\
    =&\sum\limits_{k=1}^{\infty} \frac{c_k(t,x)}{k!}\lim\limits_{\epsilon_i\to 0} \frac{(u^k(t,x,\epsilon_i))}{\epsilon_i}
    =c_1(t,x)u^{(i)}.
\end{split}
\end{equation}
Here and also in what follows, it is emphasized that both \eqref{eq:fol2} and \eqref{eq:fol3} should be understood in a similar sense to that of \eqref{eq:fol1} in proper function spaces which should clear from the context. 

By plugging \eqref{eq:fol1}, \eqref{eq:fol2} and \eqref{eq:fol3} into \eqref{equ:h_expansion}, along with straightforward calculations, one can derive the following hyperbolic system: 
\begin{equation}
\label{eq:1st le}
\begin{cases}
    \partial_t^2 u^{(i)}=a(x)\Delta_{g} u^{(i)} + c_1(t,x) u^{(i)}, & \text{on } \mathcal{M}, \\
    u^{(i)}(t,x) = h^{(i)}, & \text{on } \Sigma, \\
    u^{(i)}(0,x) = 0, \quad \partial_t u^{(i)}(0,x) = 0, & \text{on } M,
\end{cases}
\end{equation}
for $i=1,\cdots,N$. The system \eqref{eq:1st le} is referred to as the first-order linearization of \eqref{equ:h_expansion}. It can be readily seen that the DtN maps associated with \eqref{eq:1st le} are given by
\begin{equation}\label{eq:1st dtn}
\begin{split}
    \Lambda^{(i)}(h):=&\partial_{\epsilon_i} \Lambda(h,\epsilon_i)\big|_{\epsilon_i=0}:=\lim\limits_{\epsilon_i\to 0} \frac{\Lambda(h,\epsilon_i)-\Lambda(h,0)}{\epsilon_i} \\
    =&\lim\limits_{\epsilon\to 0} \frac{\partial_{\nu} (u+F(t,x,u))\big|_{\Sigma}}{\epsilon_i}=\partial_{\nu} u^{(i)}\big|_{\Sigma}, \quad i=1,\cdots,N.
\end{split}
\end{equation}

We proceed to derive the second-order linearized systems by following similar calculations for the first-order ones through differentiating with respect to $\epsilon_{i_1}$ and $\epsilon_{i_2}$ for $1\leq i_1<i_2\leq N$; that is
\begin{equation}
\label{equ:Second_Order_linearization_equation}
    \left\{
    \begin{aligned}
        &\partial_t^2 u^{(i_1i_2)} = a(x)\Delta_{g} \left(u^{(i_1i_2)}+b_2(t,x)u^{(i_1)}u^{(i_2)}\right)+c_1(t,x) u^{(i_1i_2)}+c_2(x)u^{(i_1)}u^{(i_2)},\ &&\text{on}\ \mathcal{M},\\
        &u^{(i_1i_2)}(t,x)=0,\ &&\text{on}\ \Sigma,\\
        &u^{(i_1i_2)}(0,x)=0,\ \partial_t u^{(i_1i_2)}(0,x) = 0,&&\text{on}\ M,
    \end{aligned}
\right.
\end{equation}
where $u^{(i_1i_2)}=\partial_{\epsilon_{i_1}\epsilon_{i_2}}^2 u\big|_{\epsilon_{i_1},\epsilon_{i_2}=0}$. The corresponding second-order DtN maps are given by
\begin{eqnarray}
\label{equ:Second_Order_linearization_DtN}
    \Lambda^{(i_1i_2)}(h)&:=&\partial^2_{\epsilon_{i_1}\epsilon_{i_2}} \Lambda(h,\epsilon)\big|_{\epsilon_{i_1},\epsilon_{i_2}=0} \nonumber\\
   & =&\partial_{\nu} \left(u^{(i_1i_2)}+b_2(t,x)u^{(i_1)}u^{(i_2)}\right)\big|_{\Sigma},
    \quad 1\leq i_1<i_2\leq N.
\end{eqnarray}

Finally, for $k>2$, by differentiating with respect to $k$ parameters $\epsilon_{i_1}, \cdots, \epsilon_{i_k}$ for $1\leq i_1<\cdots<i_k\leq N$,
we can obtain the equations satisfied by the $k$th-order linearization $u^{(i_1\cdots i_k)}=\partial_{\epsilon_{i_1}\cdots\epsilon_{i_k}}^k u\big|_{\epsilon_{i_1},\cdots,\epsilon_{i_k}=0}$ of $u$:
\begin{equation*}
\label{equ:k-th_Order_linearization_equation}
    \left\{
    \begin{aligned}
        &\partial_t^2 u^{(i_1\cdots i_k)}-a(x)\Delta_{g} u^{(i_1\cdots i_k)}-c_1(t,x) u^{(i_1\cdots i_k)}\\&=\frac{1}{(k-1)!}\left[c_k(t,x)u^{(i_1)}\cdots u^{(i_k)}\right]
        +a(x)\Delta_g F^{(i_1\cdots i_k)}+G^{(i_1\cdots i_k)},\ &&\text{on}\ \mathcal{M},\\
        &u^{(i_1\cdots i_k)}(t,x)=0,\ &&\text{on}\ \Sigma,\\
        &u^{(i_1\cdots i_k)}(0,x)=0,\ \partial_t u^{(i_1\cdots i_k)}(0,x) = 0,&&\text{on}\ M,
    \end{aligned}
\right.
\end{equation*}
where $F^{(i_1\cdots i_k)}$ and $G^{(i_1\cdots i_k)}$ are functions depending on the lower-order linearizations of $u$ as well as the coefficients $b_2, c_1, \cdots, c_{k-1}$.
Moreover, the associated $k$th-order DtN maps are given by
\begin{equation}
    \label{equ:k-th_Order_linearization_DtN}
    \begin{aligned}
    \Lambda^{(i_1\cdots i_k)}:=&\partial_{\epsilon_{i_1}\cdots\epsilon_{i_k}}^k \Lambda(h,\epsilon_{i_1},\cdots,\epsilon_{i_k})\big|_{\epsilon_{i_1},\cdots,\epsilon_{i_k}=0}\\
    =&\partial_{\nu}\left(u^{(i_1\cdots i_k)}+F^{(i_1\cdots i_k)}\right)\big|_{\Sigma}.
\end{aligned}
\end{equation}

\subsection{Gaussian beams and their properties}
\label{sec:Gaussian_Beam_Solution}

In this subsection, we construct a special class of solution to the associate linear hyperbolic equations on Lorentian manifolds, which are known as the Gaussian beam solutions and will serve as the ``probing modes" for our inverse problems. 
They are required to satisfy the following equation:
\begin{equation}
\label{equ:Gaussian_beam_equation}
    \left\{
    \begin{aligned}
        &\partial_t^2 u=a(x)\Delta_g u+c_1(t,x) u\ &&\text{on}\ \mathcal{M},\\
        &u(0,x)=0,\ \partial_t u(0,x) = 0 &&\text{on}\ M.
    \end{aligned}
\right.
\end{equation}

In what follows, we begin with a brief discussion of some preliminary results. Then we construct the desired Gaussian beams. Finally, we provide several critical properties of these solutions for our subsequent use to the inverse problems. 


\subsubsection{Construction of Fermi Coordinates}\label{sec:Fermi}

Assuming $\gamma$ is a unit-speed geodesic passing through a point $(t_0,x_0)\in \mathcal{M}$, we next introduce the Fermi coordinates in a neighborhood of the null geodesic $\alpha(t)=(t,\gamma(t))$. Let $\alpha(t)$ join the two points $(t_{-},\gamma(t_{-}))$ and $(t_{+},\gamma(t_{+}))$, where $t_{-},t_{+}\in (0,T)$ and $\gamma(t_{-}),\gamma(t_{+})\in \partial M$. We can extend $\alpha(t)$ to $\mathcal{M}$ such that $\alpha(t)$ is defined on $[t_{-}-\delta,t_{+}+\delta]\subset(0,T)$ with $\delta$ being a small positive constant. As the formal construction in \cite{UZ21a}, for the new metric defined as $\widetilde{g} = -dt^2+a(x)g$, there exists a coordinate neighborhood $(U,\Phi)$ of $\gamma([t_{-},t_{+}])$, with the coordinates denoted by $(z_0:=s,z_1,z_2,z_3)$, satisfying:
\begin{enumerate}
\item $\Phi(U) = (t_{-}-\delta',t_{+}+\delta')\times B(0,\delta')$ where $B(0,\delta')$ denotes a ball in $\mathbb{R}^3$ with small radius $0<\delta'<\delta$;
\item $\Phi(\gamma(s)) = (s,0,0,0)$.
\end{enumerate}
Furthermore, the corresponding metric can be rewritten as
\[
    \widetilde{g}\left|_{\alpha}\right.=2ds\otimes dz_1+\sum\limits_{i=2
    }^3 dz_i\otimes dz_i,
\]
and possesses the following properties
\[
\frac{\partial \widetilde{g}_{i_2i_3}}{\partial z_{i_1}}\left|_{\alpha}\right.=0,\, \quad 0\leq i_1,i_2,i_3\leq 3.
\]


\subsubsection{Construction of desired Gaussian beams}\label{sec:existence of GM}

Following a similar spirit to the study in \cite{UZ21b} and utilizing the Fermi coordinate system, we construct the Gaussian beam solutions
that are concentrated along a given null geodesic $\alpha:I=(t_{-}-\delta,t_{+}+\delta)\to \mathcal{M}$.
By the construction of Fermi Coordinates in the previous subsection, we can obtain a neighborhood of this geodesic:
\begin{equation}
	\mathcal{V}=\left\{x=(s,z^{\prime}) \in \mathcal{M}~|~s \in I,\,| z^{\prime} \mid:=\sqrt{\left|z_1\right|^2+\left|z_2\right|^2+\left|z_3\right|^2}<\delta^{\prime}\right\},
\end{equation}
with $\delta'$ defined in Section \,\ref{sec:Fermi}.
In the neighborhood $\mathcal{V}$, we assume that the solution has a Wentzel-Kramers-Brillouin (WKB) ansatz as
\begin{equation}\label{equ:utau}
	u_\tau\left(s, z^{\prime}\right)=e^{\mathrm{i} \tau \phi\left(s, z^{\prime}\right)} A\left(s, z^{\prime}\right),\ \ \mathrm{i}:=\sqrt{-1},
\end{equation}
where the complex-valued phase function $\phi \in C^{k}(\mathcal{V})$ and the amplitude function $A \in C_c^{k}(\mathcal{V})$ are to be determined for a positive integer $k$.
Substituting \eqref{equ:utau} into
\begin{equation}
	\label{operator_L_eqn}
	\mathcal{L}_{a,c_1}u_{\tau}:=(-\partial_t^2+a(x)\Delta_g+c_1(t,x))u_{\tau}=0,
\end{equation}
we have
\begin{equation*}
	\mathcal{L}_{a,c_1}\left(e^{\mathrm{i} \tau \phi} A\right)=e^{\mathrm{i} \tau \phi}\left(-\tau^2(\mathcal{S} \phi) A+\mathrm{i} \tau \mathcal{T} A+\mathcal{L}_{a,c_1} A\right)=0,
\end{equation*}
where
\begin{equation*}
	\mathcal{S} \phi:=\langle d \phi, d \phi\rangle_{\widetilde{g}}, \quad \mathcal{T} A:=2\langle d \phi, d A\rangle_{\widetilde{g}}+A \left(\triangle_{\widetilde{g}} \phi\right).
\end{equation*}

To ensure that the constructed solution exhibits a vanishing of order $N$ along the null geodesic $\alpha$, we further assume the following ansatz for the phase and amplitude functions:
\begin{equation}
	\phi\left(s, z^{\prime}\right)=\sum_{k=0}^N \phi_k\left(s, z^{\prime}\right) \quad \text { and } \quad A\left(s, z^{\prime}\right)=\chi\left(\frac{\left|z^{\prime}\right|}{\delta^{\prime}}\right) \sum_{k=0}^N \tau^{-k} A_k\left(s, z^{\prime}\right) \text {, }
\end{equation}
where $\phi_k$ is a complex-valued $k$th degree homogeneous polynomial with respect to the variables $z_1, z_2, z_3$, and $\chi(t)$ is a non-negative smooth cutoff function with a compact support such that $\chi(t)=1$ for $|t|\leq\frac{1}{4}$ and $\chi(t)=0$ for $|t|\geq\frac{1}{2}$.

The Gaussian beam solution exhibiting a vanishing of order $N$ along the null geodesic $\alpha$ is equivalent to the requirements as described below.
First, we require the phase function to satisfy the following set of equations:
\begin{equation}
	\label{operator_S_eqn}
	\frac{\partial^\Theta}{\partial z^\Theta}\left(\mathcal{S} \phi\right)(s, 0, \ldots, 0)=0, \quad \forall s \in I,
\end{equation}
where $\Theta=(0,\Theta_1,\Theta_2,\Theta_3)$ satisfies $|\Theta|=\Theta_1+\Theta_2+\Theta_3\leq N$. In particular, we refer to the case where $\Theta=0$ as the eikonal equation.
Second, we also require that the leading term of the amplitude function, namely $a_0$, satisfies the following transport-type equation:
\begin{equation}
	\label{operator_T_eqn}
	\frac{\partial^\Theta}{\partial z^\Theta}\left(\mathcal{T} A_0\right)(s, 0, \ldots, 0)=0, \quad \forall s \in I.
\end{equation}
Finally, we need the adjacent terms of the amplitude to satisfy the following recursive equation
\begin{equation}
	\label{operator_T_and_L_eqn}
	\frac{\partial^\Theta}{\partial z^\Theta}\left(i \mathcal{T} A_k\right)(s, 0, \ldots, 0)=-\frac{\partial^\Theta}{\partial z^\Theta}\left(\mathcal{L}_{a,c_1} A_{k-1}\right)(s, 0, \ldots, 0), \quad \forall s \in I,
\end{equation}
where $k=1,2,\cdots,N$.

To summarize, the solution obtained by satisfying equations \eqref{operator_L_eqn}, \eqref{operator_S_eqn}, \eqref{operator_T_eqn}, and \eqref{operator_T_and_L_eqn} for the phase and amplitude is referred to as an approximate Gaussian beam solution of order $N$.

As per \cite{FO22}, the phase function of such solutions exhibits the following two properties:
\begin{equation}
	\label{property_of_phase_function}
	\left.\Im(\phi\left(s, z^{\prime}\right))\right|_\alpha=0, \quad \Im(\phi\left(s, z^{\prime}\right))\geq C|z^{\prime}|^2, \quad\, \forall z=(s,z')\in\mathcal{V},
\end{equation}
where $\Im(v)$ is the image part of $v$ for any $v\in \mathbb{C}$. Moreover, we have the following estimates
\begin{equation}
	\left\|\mathcal{L}_{a,c_1} u_\tau\right\|_{H^k((0, T) \times M)} \lesssim \tau^{-K}, \quad\left\|u_\tau\right\|_{\mathcal{C}((0, T) \times M)} \lesssim 1,
\end{equation}
where $K=\frac{N+1-k}{2}-1$. Here and also in what follows, $``\mathscr{A}\lesssim\mathscr{B}"$ signifies that $\mathscr{A}\leq C\mathscr{B}$ for a generic constant $C$. These estimates can be directly computed from \eqref{operator_L_eqn}, \eqref{operator_S_eqn}, \eqref{operator_T_eqn}, \eqref{operator_T_and_L_eqn}, and the second property of the phase function is given in (\ref{property_of_phase_function}).

Let us show how the phase function and amplitude function can be constructed through \eqref{operator_S_eqn}, \eqref{operator_T_eqn}, and \eqref{operator_T_and_L_eqn}. First, we construct the $\phi_0$ component of the phase function $\phi$. We set $\Theta=(0,0,0,0)$ and expand \eqref{operator_S_eqn} to obtain the following equation:
\begin{equation}
	\sum_{i_1, i_2=0}^3 \widetilde{g}^{i_1 i_2} \frac{\partial \phi}{\partial z^{i_1}} \frac{\partial \phi}{\partial z^{i_2}}=0.
\end{equation}
On the null geodesic, the metric is $\left.\widetilde{g}\right|_{\alpha}=2 ds \otimes dz_1+\sum_{i=2}^3 dz_i \otimes dz_i$. Therefore, the above equation can be rewritten as
\begin{equation}
	\label{phase_eqn_order_first}
	2 \partial_0 \phi \partial_1 \phi+\sum_{i=2}^3\left(\partial_i \phi\right)^2=0.
\end{equation}
Utilizing $\frac{\partial \widetilde{g}_{i_2i_3}}{\partial z_{i_1}}\left|_{\alpha}\right.=0$, for all $0\leq i_1,i_2,i_3\leq 3$, we can transform the equation for $|\Theta|=1$ into
\begin{equation}
	\label{phase_eqn_order_second}
	\sum_{i_2, i_3=0}^n \widetilde{g}^{i_2 i_3} \frac{\partial^2 \phi}{\partial z^{i_1}\partial z^{i_2}} \frac{\partial \phi}{\partial z^{i_3}}=0,
\end{equation}
where $i_1=1,2,3$ corresponds to the three cases where $\Theta_1=1$, $\Theta_2=1$, and $\Theta_3=1$, respectively. For \eqref{phase_eqn_order_first} and \eqref{phase_eqn_order_second}, we only need to take $\phi_0\left(s, z^{\prime}\right)=0$ and $\phi_1\left(s, z^{\prime}\right)=z_1$ to satisfy the equations.
For $|\Theta|=2$, we further assume $\phi_2\left(s, z^{\prime}\right):=\sum_{1 \leqslant i_1, i_2 \leqslant 3} H_{i_1i_2}(s) z^{i_1} z^{i_2}$, where $H_{i_1i_2}=H_{i_2i_1}$ is a complex-valued symmetric matrix to be determined. Through the properties of the metric and equations \eqref{phase_eqn_order_first} and \eqref{phase_eqn_order_second}, we obtain a Riccati equation satisfied by $H$ as follows
\begin{equation}\label{equ:H}
	\frac{d}{d s} H+H C H+D=0, \quad \forall s \in I,
\end{equation}
where $C$ and $D$ are given by
\begin{equation}\label{equ:CD}
C= \left(
\begin{array}{ccc}
0 & 0 &  0\\
0 & 2 &  0\\				
0 & 0 &  2\\
\end{array}
\right),\quad  D_{i_1i_2}= \frac{1}{4} \partial_{i_1i_2}^2\widetilde{g}^{11}\left|_{\alpha}\right.,\, i_1,i_2=1,2,3.
\end{equation}
From \cite{FO22}, we can ascertain the existence of a solution to this equation in the next subsection. For the case of $3\leq |\Theta|\leq N$, through similar calculations, we can derive a linear first-order ODE system whose right-hand side only contains low-order terms of $\phi_i(i<|\Theta|)$. By solving this system of equations, we can obtain the corresponding higher-order terms of $\phi_{|\Theta|}$.

Next, we briefly outline the construction of the amplitude function $A$. By similar calculations as the above, we obtain the equation satisfied by the leading term $A_0$:
\begin{equation}
	\label{amplitude_eqn_order_first}
	2 \sum_{i_1, i_2=0}^3 \widetilde{g}^{i_1 i_2} \frac{\partial \phi}{\partial z^{i_1}} \frac{\partial \phi}{\partial z^{i_2}}+\left(\triangle_{\tilde{g}} \phi\right) A_0=0, \quad \forall s \in I.
\end{equation}
Based on the earlier calculations for constructing the phase function $\phi$, we can simplify the above equation to
\begin{equation}
	2 \frac{d}{d s} A_{0}+\operatorname{Tr}(C H) A_{0}=0, \quad \forall s \in I,
\end{equation}
which results in the leading term function $A_0$.
For higher-order amplitude functions $A_k(k\geq 1)$, we can derive them through the recursive equation \eqref{operator_T_and_L_eqn} and
$\{A_0,\cdots,A_{k-1}, \phi_0,\cdots,\phi_N\}$. These can be simplified into the corresponding first-order linear ODEs which are solvable by the classical ODE theory. Since the subsequent equations involved are all first-order linear ODEs, we can systematically obtain the corresponding amplitude functions term by term.

In conclusion, we can construct the desired Gaussian beam solutions of order $N$.


\subsubsection{Properties of the Gaussian beams}

In this subsection, we outline some properties of the Gaussian beam solutions, which are instrumental in the subsequent proofs of the main theorems for our inverse problems.

First, we discuss the existence and properties of $H$ as defined in (\ref{equ:H}).
\begin{Lemma}\label{lem:H} (\cite{FIKO21,FO22})
Let $H_0\in \mathbb{C}^{3\times 3}$ be a symmetric matrix with $\Im(H_0)>0$ and $C$ and $D$ are defined by (\ref{equ:CD}).
Then there exists a unique symmetric matrix $H\in C^3(I;\mathbb{C}^{3\times 3})$ satisfying a Riccati-type equation as
\begin{align*}
	\frac{d}{ds}H+HCH+D=0,\,\,\, s\in I,
\end{align*}
with the initial value $H_0$. Moreover, $H$ satisfies $\Im(H(s))>0$ and we can express $H$ as $H(s) = Z(s)Y^{-1}(s)$, where $Z\in C^3(I;\mathbb{C}^{3\times 3})$ and $Y\in C^4(I;\mathbb{C}^{3\times 3})$ solve the following ODEs
\begin{equation}\label{equ:Y and Z}
	\left\{
	\begin{aligned}
		&\frac{d}{ds}Y = CZ,\,\,\,\ &&Y(t_{-}-\delta')=I,\\
		&\frac{d}{ds}Z = -DY,\,\,\,\ &&Z(t_{-}-\delta')=H_0,
	\end{aligned}
	\right.
\end{equation}
and $I$ denotes the identity matrix.
\end{Lemma}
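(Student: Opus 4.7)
The plan is to establish the lemma in the following order: first get existence and regularity of the pair $(Y, Z)$ by standard linear ODE theory; then derive two Wronskian-type conservation identities encoding symmetry and positivity; then use the second identity to prove global invertibility of $Y$ on $I$; and finally set $H := Z Y^{-1}$ and verify the claimed Riccati equation, symmetry, and positivity of the imaginary part.

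First I would observe that $C$ is a constant real symmetric matrix and $D(s)$ is real symmetric and sufficiently smooth in $s$, since it is built from second derivatives of $\widetilde{g}^{11}$ along $\alpha$ while $g \in C^{10}$ by the standing assumption. The linear ODE system \eqref{equ:Y and Z} therefore has a unique global solution $(Y, Z)$ on $I$; bootstrapping through $Y' = CZ$ and $Z' = -DY$ then yields $Z \in C^3$ and $Y \in C^4$ as claimed.

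Next I would derive two conserved quantities. Using $C^T = C$, $D^T = D$, and the reality of $C$ and $D$ (so $C^* = C$, $D^* = D$), a direct differentiation gives
\begin{equation*}
\frac{d}{ds}\bigl(Y^T Z - Z^T Y\bigr) = 0, \qquad \frac{d}{ds}\bigl(Y^* Z - Z^* Y\bigr) = 0
\end{equation*}
throughout $I$. Inserting the initial data $Y(t_{-}-\delta') = I$, $Z(t_{-}-\delta') = H_0$ together with the symmetry of $H_0$, these identities become
\begin{equation*}
Y^T Z \equiv Z^T Y \qquad \text{and} \qquad Y^* Z - Z^* Y \equiv 2\mathrm{i}\,\Im(H_0).
\end{equation*}

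The second identity is the crux of the argument, because it forces $Y(s)$ to be invertible for every $s \in I$. Indeed, if $Y(s_*) v = 0$ for some $v \ne 0$, then testing the identity against $v^*$ on the left and $v$ on the right yields $0 = 2\mathrm{i}\, v^* \Im(H_0)\, v$, contradicting $\Im(H_0) > 0$. With $Y$ invertible throughout $I$, define $H := Z Y^{-1} \in C^3(I;\mathbb{C}^{3\times 3})$. The first identity then gives $H^T = (Y^T)^{-1} Z^T = (Y^T)^{-1} (Y^T Z) Y^{-1} = Z Y^{-1} = H$, so $H$ is symmetric, while the second, rewritten as $Y^* \Im(H)\, Y = \Im(H_0)$, yields $\Im(H(s)) = (Y^*)^{-1} \Im(H_0)\, Y^{-1} > 0$.

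Finally, direct differentiation delivers the Riccati equation:
\begin{equation*}
H' = Z' Y^{-1} - Z Y^{-1} Y' Y^{-1} = -D Y\, Y^{-1} - (Z Y^{-1})\, C\, (Z Y^{-1}) = -D - H C H,
\end{equation*}
with $H(t_{-}-\delta') = H_0$ by construction. Uniqueness of $H$ is then standard, since the Riccati right-hand side is locally Lipschitz in $H$ and the solution constructed above is already defined on all of $I$. The main obstacle is precisely the global invertibility of $Y$; without the positivity $\Im(H_0) > 0$ a matrix Riccati solution can blow up in finite time, and the conserved Hermitian form $Y^* Z - Z^* Y$ is exactly the mechanism that rules this out here.
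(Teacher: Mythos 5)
Your proof is correct. The paper does not prove this lemma itself — it is quoted from \cite{FIKO21,FO22} — and your argument (global solvability of the linear system for $(Y,Z)$, the two conserved Wronskian identities $Y^TZ-Z^TY\equiv 0$ and $Y^*Z-Z^*Y\equiv 2\mathrm{i}\,\Im(H_0)$, invertibility of $Y$ from positivity of $\Im(H_0)$, and then $H=ZY^{-1}$) is precisely the standard proof given in those references, with all steps checking out.
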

	
	With the definition of $Y$ in Lemma\,\ref{lem:H}, we ascertain that
	\begin{align}
		\label{equ:amplitude_form}
		A_0(s) = \det(Y(s))^{-\frac{1}{2}}.
	\end{align}
is the solution of equation \eqref{operator_T_eqn}.

Second, we will need the following lemma which summarizes the properties of $Y$.
\begin{Lemma}\label{lem:Y}
(\cite{AUZ22,FIKO21})
Let the matrix $Y$ in Lemma~\ref{lem:H} be non-degenerate for any $s$. Then $Y$ satisfies the following second-order ODE:
\begin{equation}
	\frac{d^2}{d s^2} Y=-C D Y, \quad Y\left(t_{-}-\delta^{\prime}\right)=I, \quad \dot{Y}\left(t_{-}-\delta^{\prime}\right)=C H_0.
\end{equation}
Meanwhile, its determinant satisfies the following algebraic equation:
\begin{equation}
	\operatorname{det}(\Im H(s))|\operatorname{det}(Y(s))|^2=\operatorname{det}\left(\Im H_0\right).
\end{equation}

It follows from \cite{FIKO21} that $\left.\partial^2_{1i_2} \widetilde{g}^{11}\right|_{\alpha}=0$, $i_2=1,2,3$ and $\left.\partial^2_{i_1i_2} \widetilde{g}^{11}\right|_{\alpha}=-\left.R_{1 i_1 1 i_2}\right|_{\gamma}$, $i_1,i_2=1,2,3$,
where $R=(R_{i_1 i_2 i_3 i_4})_{i_1i_2i_3i_4=1}^{3}$ is the Riemann curvature tensor on the Riemann manifold ($M,g$).
Specifically, if we set $Y_{11}=1$ and $Y_{1i_2}=Y_{i_11}=0$ ($i_1, i_2=2,3$), and let $\widetilde{Y}=\left(Y_{i_1}^{i_2}\right)_{i_1, i_2=2}^{3}$, then
\begin{equation}
\det \left(Y\right)=\det (\widetilde{Y}).
\end{equation}
\end{Lemma}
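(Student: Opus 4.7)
The plan is to establish the three assertions in sequence, all building on the first-order system \eqref{equ:Y and Z} from Lemma~\ref{lem:H}. The second-order ODE is essentially a one-line calculation: differentiating $\dot{Y}=CZ$ and substituting $\dot{Z}=-DY$ gives $\ddot{Y}=C\dot{Z}=-CDY$, while $Y(t_--\delta')=I$ is already built in and $\dot{Y}(t_--\delta')=CZ(t_--\delta')=CH_0$ follows from the second initial condition in \eqref{equ:Y and Z}. No further analytic input is needed here.

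For the determinant identity my intention is to run the standard Wronskian/symplectic argument. I would set $W(s):=Y^{*}Z-Z^{*}Y$ and differentiate along the flow; inserting $\dot{Y}=CZ$ and $\dot{Z}=-DY$ produces $\dot{W}=Z^{*}C^{*}Z-Y^{*}DY+Y^{*}D^{*}Y-Z^{*}CZ$, and this vanishes identically because $C$ is explicitly diagonal and real (hence self-adjoint) while $D$ is real and symmetric by equality of mixed partials in \eqref{equ:CD}. Thus $W$ is constant, and evaluating at $s=t_--\delta'$ gives $W\equiv H_0-H_0^{*}=2\mathrm{i}\,\Im H_0$. Using the factorisation $HY=Z$ from Lemma~\ref{lem:H}, one rewrites $W=Y^{*}HY-Y^{*}H^{*}Y=2\mathrm{i}\,Y^{*}(\Im H)Y$, so $Y^{*}(\Im H)Y=\Im H_0$, and taking determinants yields $|\det Y|^{2}\det(\Im H)=\det(\Im H_0)$. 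As a free bonus this also reproves non-degeneracy of $Y$ whenever $\Im H_0>0$.

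For the final structural statement the essential input is the Fermi-coordinate identity $\partial^{2}_{i_1 i_2}\widetilde{g}^{11}|_{\alpha}=-R_{1i_1 1 i_2}|_{\gamma}$ together with the vanishing $\partial^{2}_{1 i_2}\widetilde{g}^{11}|_{\alpha}=0$, both of which are standard computations along a null geodesic in a Fermi frame that I would simply cite from \cite{FIKO21,FO22}. Given these, the definition \eqref{equ:CD} forces the first row, and by symmetry the first column, of $D$ to vanish; since $C$ already has a vanishing first row, $CD$ has vanishing first row and first column. The gauge choice $Y_{11}=1$, $Y_{1 i_2}=Y_{i_1 1}=0$ at $s=t_--\delta'$ is then compatible with $\dot Y(t_--\delta')=CH_0$ (whose first row is automatically zero, and whose first column is also zero under the natural initial-data normalisation forcing $(H_0)_{1 i_2}=(H_0)_{i_1 1}=0$), and with the ODE $\ddot{Y}=-CDY$. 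Consequently the block-diagonal shape $Y=\mathrm{diag}(1,\widetilde{Y})$ is preserved for all $s$, and $\det Y=\det\widetilde{Y}$ follows by one cofactor expansion along the first row.

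The main obstacle, as I see it, is not any single calculation but the bookkeeping in the Wronskian step: the cancellation $\dot{W}\equiv 0$ collapses only when both $C$ and $D$ are self-adjoint, so one must be careful that the Fermi-frame normalisation really makes $D$ real symmetric rather than merely symmetric. Everything else reduces either to elementary linear algebra on a $3\times 3$ block decomposition or to cited facts from the Fermi-coordinate/curvature literature.
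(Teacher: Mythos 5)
Your proposal is correct, and it actually supplies more than the paper does: the paper states this lemma with a citation to \cite{AUZ22,FIKO21} and gives no proof of its own, so there is no in-text argument to compare against. Your three steps are the standard ones from those references. The reduction $\ddot{Y}=C\dot{Z}=-CDY$ with $\dot{Y}(t_--\delta')=CH_0$ is immediate from \eqref{equ:Y and Z}; the Wronskian computation $W=Y^{*}Z-Z^{*}Y$ with $\dot W=0$ (valid because $C=\mathrm{diag}(0,2,2)$ is real symmetric and $D_{i_1i_2}=\frac14\partial^2_{i_1i_2}\widetilde g^{11}|_{\alpha}$ is real symmetric, $\widetilde g$ being a real metric) gives $Y^{*}(\Im H)Y=\Im H_0$ and hence $|\det Y|^{2}\det(\Im H)=\det(\Im H_0)$ upon taking determinants, exactly as you say. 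Your worry about $D$ being genuinely real symmetric is resolved correctly.

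One point worth making explicit, which you flag but somewhat parenthetically: the block-diagonal persistence of $Y$ is not automatic from the ODE alone. The vanishing of the first row of $Y-\mathrm{diag}(1,\widetilde Y)$ follows for free ($\ddot Y_{1j}=0$ and $(CH_0)_{1j}=0$ since the first row of $C$ vanishes), but the vanishing of the first column requires $(CH_0)_{i1}=2(H_0)_{i1}=0$ for $i=2,3$, i.e.\ the initial datum $H_0$ must itself be chosen with vanishing off-diagonal first row and column. This is a genuine hypothesis on the normalisation (satisfied by the usual choice $H_0=\mathrm{i}I$ in \cite{FIKO21}), not a consequence of the Riccati structure, so it belongs in the statement of the reduction rather than in a parenthesis; with it, $\det Y=\det\widetilde Y$ is indeed a one-line cofactor expansion. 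Subject to that clarification, the proof is complete and correct.
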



Third, we provide the estimates of the remainder term. Through the construction in Section\,\ref{sec:existence of GM},
we ascertain that the Gaussian beam solution $u_\tau$ concentrates on the null geodesic $\alpha$. Let
\begin{equation}
    \label{equ:Gaussian_beam_u,v}
    \begin{aligned}
            &u(t,x) =u_\tau (t, x)+R_{u,\tau}(t,x),\\
    \end{aligned}
\end{equation}
where $R_{u,\tau}$ is the remainder term satisfying
\begin{equation*}
\left\{
\begin{aligned}
        &\mathcal{L}_{a,c_1}R_{u,\tau} = F_{u,\tau},&&\ \text{on }\mathcal{M},\\
        &R_{u,\tau}=0,&&\ \text{on } \Sigma,\\
    &R_{u,\tau}(0,x)=0,\ \partial_t R_{u,\tau}(0,x)=0, &&\ \text{on } M,
\end{aligned}\right.
\end{equation*}
with $F_{u,\tau}=-e^{\mathrm{i}\tau\phi}\left[\tau^2 A(S\phi)-\mathrm{i}\tau\mathcal{T}A+\mathcal{L}_{a,c_1}A\right]$. On one hand, using the $H^{m}$ estimate of hyperbolic equations in \cite{P87} for $m\geq 1$, we have
\begin{align*}
    \|R_{u,\tau}\|_{H^{m}(\mathcal{M})}\leq C\|F_{u,\tau}\|_{H^{m-1}(\mathcal{M})}.
\end{align*}
Moreover, it is straightforward to verify that, for $\Im \phi > 0$, we have
\begin{align*}
    |e^{\mathrm{i}\tau\phi}|\leq e^{-D\tau |z'|^2}
\end{align*}
with $D>0$ only depending on the underlying geometry. It follows from \cite{FIKO21} that
\begin{align*}
    &\|e^{\mathrm{i}\tau \phi}S\phi\|_{L_2}^2 \lesssim \int_{\mathcal{V}} |z'|^4e^{-D\tau |z'|^2}\chi^2\left(\frac{|z'|}{\delta} \right) dz=O(\tau^{-\frac{5}{2}}),\\
    &\|e^{\mathrm{i}\tau\phi}\mathcal{T} A\|_{L_2}^2\lesssim \int_{\mathcal{V}} |z'|^2e^{-D\tau |z'|^2}\chi^2\left(\frac{|z'|}{\delta} \right)dz = O(\tau^{-\frac{3}{2}}),\\
    &\|e^{\mathrm{i}\tau\phi}\|_{L_2}^2\lesssim \int_{\mathcal{V}} e^{-D\tau |z'|^2}\chi^2\left(\frac{|z'|}{\delta} \right)dz = O(\tau^{-\frac{1}{2}}).
\end{align*}
Combining the estimates above, we can obtain
\begin{align}
\label{equ:Ru_H1_property}
    \|R_{u,\tau}\|_{H^m(\mathcal{M})}\leq C\tau^{m-1/2},
\end{align}
which will play a crucial role in the subsequent proofs.

Finally, we conclude this subsection with a remark on the backward problem related to (\ref{equ:Gaussian_beam_equation}).
\begin{Remark}
For the  backward problem associated with (\ref{equ:Gaussian_beam_equation}), where the initial condition in \eqref{equ:Gaussian_beam_equation} is altered by
\begin{align*}
	u(T,x) = \partial_{t} u(T,x)=0,\,\,\, x\in M,
\end{align*}
all the properties regarding $H$, $Y$, and the estimation of the remainder still hold.
\end{Remark}


\subsection{Proof of Theorem~\ref{Theo:IP11}}

In this section, we provide the proof of Theorem~\ref{Theo:IP11}, which adopts and develops the method of ray transforms from \cite{FIKO21,FO20}.

Considering an interval $I$, we set $\gamma$ as the maximal geodesic in $M$ without any conjugate points, $\alpha$ as the maximal geodesic in $\mathcal{M}$, and $\widetilde{Y}$ as defined in Lemma\,\ref{lem:Y}. We then explore the following two ray transformations:
\begin{eqnarray}\label{eqn: trans Jm}
\mathcal{J}^{(m)}_{\widetilde{Y}} f_1&:=& \int_{t\in I} f_1(\gamma(t))\left|\det \widetilde{Y}\right|^{-1}(\det(\widetilde{Y}(t)))^{-m/2}\mathrm{d}t,\, m\ge 1,\\
\mathcal{J}^{(2)} f_2&:=& \int_{t\in I} f_2(\alpha(t))\mathrm{d}t,
\end{eqnarray}
where $f_1\in C(M)$, and $f_2\in C(\mathcal{M})$ with supp$f_2\in \mathcal{E}$ as defined in \eqref{equ:def_E}.
For the former one, we have the following properties without $\left|\det \widetilde{Y}\right|^{-1} $ (cf.\cite{UZ21b}):
\begin{align*}
	\forall\, f_1\in C(M),\, \mathcal{J}_{\widetilde{Y}}^{(m)} f_1=0,  \, m=1,  \quad \Longrightarrow \quad  f_1(\alpha(t))=0,\, \forall\, t\in I,
\end{align*}
and for the latter one, we have (cf.\cite{FIKO21,FO20}):
\begin{align*}
 \forall\, f_2\in C(\mathcal{M}),\, \mathcal{J}^{(2)} f_2=0, \quad \Longrightarrow \quad  f_2(\alpha(t))=0,\, \forall\, t\in I.
\end{align*}
Here, we need to provide a technical proof for the former one with $m\geq 3$.
\begin{Lemma}
\label{ray_transform}
Let $(M,g)$ be a three-dimensional compact smooth Riemannian manifold with boundary. Let $\gamma$ be a maximal geodesic in $M$ without any conjugate points, and $m\geq 1$. Then, the following result holds:
\begin{equation*}
 \forall\, f_1\in C({M}), \, \mathcal{J}^{(m)}_{\widetilde{Y}} f_1=0 \ \  \Longrightarrow \quad  f_1(\gamma(t))=0,\,  \forall\, t\in I,
\end{equation*}
which implies that $\mathcal{J}^{(m)}_{\widetilde{Y}}$ is injective.
\end{Lemma}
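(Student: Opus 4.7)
The plan is to reduce Lemma~\ref{ray_transform} for $m\ge 3$ to the $m=1$ injectivity already available from \cite{UZ21b}, by exploiting the freedom in the Jacobi initial datum $H_0$ that parametrizes $\widetilde Y$. By Lemmas~\ref{lem:H}--\ref{lem:Y}, $\widetilde Y(t) = \widetilde Y(t;H_0)$ satisfies a Jacobi-type linear ODE along $\gamma$, is real-analytic in $H_0$, and is non-vanishing on $I$ thanks to the no-conjugate-points hypothesis. Hence the weight
\[
w_{H_0}(t) := |\det\widetilde Y(t;H_0)|^{-1}\bigl(\det\widetilde Y(t;H_0)\bigr)^{-m/2}
\]
is smooth and non-vanishing on $I$, and depends real-analytically on $H_0$.

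I would interpret the hypothesis $\mathcal{J}^{(m)}_{\widetilde Y} f_1 = 0$ as holding for every admissible $H_0$ with $\Im H_0>0$ associated to the fixed geodesic $\gamma$. Differentiating the scalar identity $\int_I f_1(\gamma(t))\,w_{H_0}(t)\,\mathrm{d}t = 0$ with respect to the entries of $H_0$ yields a family of weighted identities $\int_I f_1(\gamma(t))\,\varphi_\beta(t)\,\mathrm{d}t = 0$, where each $\varphi_\beta$ is an explicit rational combination of entries of the fundamental matrices $(Y,Z)$ from \eqref{equ:Y and Z}. A Gronwall / unique-continuation argument for the Jacobi ODE shows that the linear span of $\{\varphi_\beta\}$ is dense enough in $C_c(I)$ to separate any nontrivial $f_1\circ\gamma$, forcing $f_1\circ\gamma\equiv 0$ on $I$. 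A cleaner alternative is induction on $m$: comparing the $m$- and $(m-2)$-weighted transforms against a common modulus factor $|\det\widetilde Y|^{-1}$ isolates the ratio $(\det\widetilde Y)^{-1}$ and reduces the step to a transform of the type handled in \cite{UZ21b}. Once $f_1\circ\gamma\equiv 0$ has been obtained along the fixed geodesic, the same argument applied to every maximal geodesic without conjugate points (which cover $M$ under Assumption~I) yields $f_1\equiv 0$ on $M$, establishing injectivity of $\mathcal{J}^{(m)}_{\widetilde Y}$.

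The main obstacle is the density/spanning step: proving that the variation fields $\{\varphi_\beta\}$ obtained from parametric differentiation in $H_0$ form a separating family for continuous functions on $I$. This requires careful control on how the Jacobi fundamental matrix depends on its initial data, together with delicate bookkeeping between the real modulus factor $|\det\widetilde Y|^{-1}$ and the complex power $(\det\widetilde Y)^{-m/2}$, which couple nontrivially along $I$. If the direct parametric approach becomes unwieldy, the inductive reduction $m\mapsto m-2$ provides a cleaner route by systematically peeling off powers of $\det\widetilde Y$ until one lands on the known $m=1$ case from \cite{UZ21b}, the density of the resulting test functions then being the only ingredient that needs a targeted proof rather than a general spanning result.
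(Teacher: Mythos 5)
Your proposal diverges from the paper's argument, and the two places where you defer the work are precisely where the proof actually lives. The paper neither differentiates in $H_0$ nor inducts on $m$; it chooses a one-parameter family of \emph{degenerating} Jacobi data. Fixing $p=\gamma(0)$, it takes $\widetilde Y^{\eta}=X_1-\mathrm{i}\eta X_2$ with $X_1(0)=0$, $\dot X_1(0)=I$, so that $\det\widetilde Y^{\eta}(t)=(t-\mathrm{i}\eta)^2(1+r^{\eta}(t))$ near $t=0$ and the weight $|\det\widetilde Y^{\eta}|^{-1}(\det\widetilde Y^{\eta})^{-m/2}$ blows up like $(t^2+\eta^2)^{-1-m/2}$. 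Splitting the integral into a window $[-\widetilde\eta,\widetilde\eta]$ and its complement, the complement stays bounded uniformly in $\eta$ (the no-conjugate-points hypothesis keeps $X_1$ nondegenerate away from $0$), while the window contributes $f_1(\gamma(0))\,E$ up to errors controlled by $\widetilde\eta$ and the modulus of continuity of $f_1$, with $E\gtrsim\eta^{-1-m}\to\infty$; dividing by $E$ and sending $\eta\to0$, then $\widetilde\eta\to0$, forces $f_1(\gamma(0))=0$. In other words, the admissible weights contain approximate evaluation functionals at every point of $\gamma$, and that concentration is the entire mechanism.

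Your first route leaves the separating/density claim unproven, and I do not believe it can be closed as you suggest: the derivatives of $w_{H_0}$ in the entries of $H_0$ at a fixed interior $H_0$ are rational expressions in finitely many fundamental solutions $(Y,Z)$ of a linear second-order ODE, and there is no reason the span of such a finite-dimensionally generated family is separating for $C_c(I)$; a Gronwall or unique-continuation statement for the Jacobi equation controls the ODE solutions themselves, not the dual question of which measures annihilate $f_1\circ\gamma$. The feature you actually need --- weights concentrating at an arbitrary prescribed point --- lives at the boundary of the parameter space (the degenerate datum $X_1(0)=0$ with $\eta\to0$), which parametric differentiation at an interior point never reaches. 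The alternative inductive step is also not a valid move: knowing $\int_I f\,w_m\,\mathrm{d}t=0$ and $\int_I f\,w_{m-2}\,\mathrm{d}t=0$ gives two linear conditions on $f$ but does not allow you to ``isolate the ratio'' $(\det\widetilde Y)^{-1}$ or manufacture any new weighted identity; moreover $m\mapsto m-2$ preserves parity, so even $m$ never reaches the cited $m=1$ base case. I would replace both routes with the concentration argument sketched above.
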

\begin{proof}
 To prove that $\mathcal{J}^{(m)}_{\widetilde{Y}}$ is injective, it is sufficient to show that
$f_1(p)=0$ for all $p\in \gamma$.

Without loss of generality, we can choose any point $p\in\gamma$ and reparametrize it so that $I=[-t_1,t_1]$ and $p=\gamma(0)$.
At the point $p$, we take an orthonormal basis $\{v_1, v_2\}$ of the tangent space $\dot{\gamma}^{\perp}(0)$.
Recall that the complex Jacobi equation for $\widetilde{Y}\in C^4(I;\mathbb{C}^{2\times 2})$ is given by
\[
\begin{split}
& \frac{d^2}{d t^2} \widetilde{Y}(t)-K(t)\widetilde{Y}(t)=0,\\
&\widetilde{Y}(0)=(0_{2\times 1}, 0_{2\times 1}),
\quad \frac{d}{d t} {\widetilde{Y}}(0)=(v_1,v_2),
\end{split}
\]
where $K=K_{i_2}^{i_3} \frac{\partial}{\partial x^{i_3}} \otimes d x^{i_2}$ with $K_{i_2}^{i_3}=\widetilde{g}^{{i_3} {i_1}} \mathcal{R}_{{i_1} {i_2}}$, $i_1$, $i_2$, $i_3=2,3$. Here, $\mathcal{R}=(\mathcal{R}_{{i_1} {i_2}})_{i_1, i_2=2}^3$ is the Ricci tensor.
For $0<\eta < t_1$, we let $\widetilde{Y}^{\eta}$ be the solution of the perturbation equation; that is
\begin{eqnarray*}
&\displaystyle \frac{d^2}{d t^2} \widetilde{Y}^{\eta}(t)-K(t)\widetilde{Y}^{\eta}(t)=0,\\
&\widetilde{Y}^{\eta}(0)=-i\eta (v_1,v_2),
\quad
\displaystyle \frac{d}{d t} {\widetilde{Y}^{\eta}}(0)=(v_1,v_2).
\end{eqnarray*}

Let $\mathcal{J}^{(m)}_{\widetilde{Y}^{\eta}}$ be the transformation given by (\ref{eqn: trans Jm}) with  $\widetilde{Y}$ replaced by $\widetilde{Y}^{\eta}$. Choose $\tilde{\eta}$ such that $0<\eta<\tilde{\eta}<t_1$ and define
\begin{align*}
	&\mathcal{J}_{\widetilde{Y}^{\eta}}^{(m)}f_1 =\mathcal{I}_1+\mathcal{I}_2,
\end{align*}
where
\begin{eqnarray}\label{equ:I1}
	&\mathcal{I}_1 = \int_{-\widetilde{\eta}}^{\widetilde{\eta}} f_1(\gamma(t))\left|\det \widetilde{Y}^{\eta}(t)\right|^{-1}\left(\det\left(\widetilde{Y}^{\eta}(t)\right)\right)^{-m/2}\mathrm{d}t,\\
\label{equ:I2}
	&\mathcal{I}_2 = \int_{[-t_1,t_1]\setminus [-\widetilde{\eta},\widetilde{\eta}]} f_1(\gamma(t))\left|\det \widetilde{Y}^{\eta}(t)\right|^{-1}\left(\det\left(\widetilde{Y}^{\eta}(t)\right)\right)^{-m/2}\mathrm{d}t.
\end{eqnarray}
To estimate the two terms in \eqref{equ:I1} and \eqref{equ:I2}, we make use of the following estimates from \cite{FO20}:
\begin{eqnarray}
	\label{equ:det|Y|}
	&\left|1-\left|\det\left(\widetilde{Y}^{\eta}(t)\right)\right|^{-1}|t-\mathrm{i}\eta|^{-2}\right|\leq C_1\widetilde{\eta},\\
\label{equ:detY}	
& \det\left(\widetilde{Y}^{\eta}(t)\right) = (t-\mathrm{i}\eta)^2(1+r^{\eta}(t))\,\,\, \text{with}\,\,\, |r^{\eta}(t)|\leq C_2|t|, \quad \forall t\in (-\widetilde{\eta},\widetilde{\eta}),\\
& C_3^{-1} <\left|\det \widetilde{Y}^{\eta}(t)\right|^{-1}< C_3
\end{eqnarray}
where $C_1$, $C_2$ and $C_3$ are positive constants independent of $\eta$ and $\widetilde{\eta}$. Therefore, we can choose
$\widetilde{\eta}$ to be small enough such that the residue  $|r^{\eta}(t)|<\frac{1}{2}$.

We first consider $\mathcal{I}_2$.
Let $X_1$ be the solution of complex Jacobi equation with $X_1(0)=0_{2\times 2}$ and $\dot{X_1}(0)=I_{2\times 2}$, and let $X_2$
be the solution of complex Jacobi equation with $X_2(0)=I_{2\times 2}$ and $\dot{X_2}(0)=0_{2\times 2}$. Then $\widetilde{Y}^{\eta}=X_1-\mathrm{i}\eta X_2$.

Since $X_1$ is non-degenerated on $[-t_1,t_1]\setminus[-\widetilde{\eta},\widetilde{\eta}]$, we have
$|\det\left(X_1(t)\right)|>C_4(\widetilde{\eta})$, where $C_4$ is a positive constant dependent on $\widetilde{\eta}$.
On the other hand, we can choose $\eta$ to be small enough such that $|\det\left(\widetilde{Y}^{\eta}(t)\right)-\det\left(X_1(t)\right)|<C_5\eta$.
These facts, together with the definition of $\mathcal{I}_2$ and the triangle inequality, imply
\begin{align}
	\label{estimate_I_2}
	|\mathcal{I}_2|<C_{6}(\widetilde{\eta}).
\end{align}
where $C_5$ is a positive constant dependent on $\widetilde{\eta}$.

Next, we consider $\mathcal{I}_1$, which can be split into three parts as follows:
\begin{align*}
	&\mathcal{I}_1 = \mathcal{I}_{1,1}+\mathcal{I}_{1,2}+I_{1,3},
\end{align*}
where
\begin{align*}
	&\mathcal{I}_{1,1} = f_1(\gamma(0))\int_{-\widetilde{\eta}}^{\widetilde{\eta}} |t-i\eta|^{-2}\left|\det \widetilde{Y}^{\eta}(t)\right|^{-1}\left(\det\left(\widetilde{Y}^{\eta}(t)\right)\right)^{-m/2}\mathrm{d}t,\\
	&\mathcal{I}_{1,2} = \int_{-\widetilde{\eta}}^{\widetilde{\eta}} \left(f_1(\gamma(t))-f_1(\gamma(0))\right)|t-i\eta|^{-2}\left|\det \widetilde{Y}^{\eta}(t)\right|^{-1}\left(\det\left(\widetilde{Y}^{\eta}(t)\right)\right)^{-m/2}\mathrm{d}t,\\
	&\mathcal{I}_{1,3} =\int_{-\widetilde{\eta}}^{\widetilde{\eta}}f_1(\gamma(t))|t-i\eta|^{-2}
\left(-1+|t-i\eta|^{2}\det\left(\left(\widetilde{Y}^{\eta}(t)\right)\right)^{-1}\right)\left|\det \widetilde{Y}^{\eta}(t)\right|^{-1}\left(\det\left(\widetilde{Y}^{\eta}(t)\right)\right)^{-m/2}\mathrm{d}t.
\end{align*}

For the term $\mathcal{I}_{1,2}$, it follows from \eqref{equ:detY} and the continuity of $f_1$ that
\begin{equation}
		\label{estimate_I_1_2}
	\begin{aligned}
			|\mathcal{I}_{1,2}|&\leq C_7\omega_{f_1}\left(\widetilde{\eta}\right)\left(\int_{-\widetilde{\eta}}^{\widetilde{\eta}}|t-i\eta|^{-2}\left(\det\left(\widetilde{Y}^{\eta}(t)\right)\right)^{-m/2}\mathrm{d}t \right)\\
		&\leq C_7\omega_{f_1}\left(\widetilde{\eta}\right)\int_{-\widetilde{\eta}}^{\widetilde{\eta}}\left(\frac{1}{(t^2+\eta^2)^{1+m/2}}\frac{1}{|1+r^{\eta}(t)|^{m/2}}\right)\mathrm{d}t\\
		&\leq 2^{m/2}C_7\omega_{f_1}\left(\widetilde{\eta}\right)\int_{-\widetilde{\eta}}^{\widetilde{\eta}}\left(\frac{1}{(t^2+\eta^2)^{1+m/2}}\right)\mathrm{d}t,
	\end{aligned}
\end{equation}
where $\omega_{f_1}\left(\widetilde{\eta}\right)$ is the modulus of continuity of $f_1$ on $[-\widetilde{\eta},\widetilde{\eta}]$, and $C_6$ is a positive constant independent of $\eta$ and $\widetilde{\eta}$.

For the term $\mathcal{I}_{1,3}$, it follows from \eqref{equ:det|Y|}, the triangle inequality, and the continuity of $f_1$ that
\begin{equation}
	\label{estimate_I_1_3}
	|\mathcal{I}_{1,3}|\leq 2^{m/2}C_8\widetilde{\eta}E,
\end{equation}
where $E=\int_{-\widetilde{\eta}}^{\widetilde{\eta}}\left(\frac{1}{(t^2+\eta^2)^{1+m/2}}\right)\mathrm{d}t$ and $C_8$ is a positive constant independent of $\eta$ and $\widetilde{\eta}$.

Similarly, for the term $\mathcal{I}_{1,1}$, it follows from the boundedness of the residue $r^{\eta}$ that
\begin{equation}
	\label{estimate_I_1_1}
	|\mathcal{I}_{1,1}|\ge C_9|f_1(\gamma(0))|E,
\end{equation}
where $C_9$ is the positive constants independent of $\eta$ and $\widetilde{\eta}$.

On the other hand, through direct calculations, we have
\begin{equation}
	\label{equ:estimate_2_integral}
	\begin{aligned}
		&\int_{-t_1}^{t_1}\left(\frac{1}{(t^2+\eta^2)^{1+m/2}}\right)\mathrm{d}t\ge C_{10}\eta^{-1-m},\\
		&\int_{[-t_1,t_1]\setminus [-\widetilde{\eta},\widetilde{\eta}]}\left(\frac{1}{(t^2+\eta^2)^{1+m/2}}\right)\mathrm{d}t\leq C_{11}\widetilde{\eta}^{-1-m},
	\end{aligned}
\end{equation}
where $C_{10}$ and $C_{11}$ are positive constants independent of $\eta$ and $\widetilde{\eta}$. Then, for $\eta$ small enough, we have
\begin{equation}
\label{dividing_thing}
\frac{1}{E} \leq\frac{1}{\frac{C_{10}}{\eta^{m+1}}-\frac{C_{11}}{\widetilde{\eta}^{m+1}}}.
\end{equation}

Since $\widetilde{Y}^{\eta}$ is the perturbation of $\widetilde{Y}$, there is a constant $C_{12}$ such that $|\mathcal{J}_{\widetilde{Y}^\eta}^{(m)} f_1-\mathcal{J}_{\widetilde{Y}}^{(m)} f_1|<C_{12}(\eta,f_1)$ with $\lim\limits_{\eta\rightarrow0}C_{12}(\eta,f_1)=0$. Together with $\mathcal{J}_{\widetilde{Y}}^{(m)}f_1=0$, we obtain
\begin{equation}
\label{estimate_J_Y_eta}
|\mathcal{J}_{\widetilde{Y}^\eta}^{(m)} f_1|<|\mathcal{J}_{\widetilde{Y}^\eta}^{(m)} f_1-\mathcal{J}_{\widetilde{Y}}^{(m)} f_1|+|\mathcal{J}_{\widetilde{Y}}^{(m)} f_1|<C_{10}(\eta,f_1).
\end{equation}

Finally, combining the estimates \eqref{estimate_I_1_1}, \eqref{estimate_I_1_2}, \eqref{estimate_I_1_3}, \eqref{estimate_I_2}, \eqref{estimate_J_Y_eta}, and \eqref{dividing_thing}, we obtain
\begin{equation*}
	\begin{aligned}
		|f_1(\gamma(0))|&\leq C_{13}\frac{|\mathcal{I}_{1,1}|}{E}\\
		&\leq C_{13}\frac{|\mathcal{I}_{1,2}|+|\mathcal{I}_{1,3}|+|\mathcal{I}_{2}|+|\mathcal{J}_{\widetilde{Y}^\eta}^{(m)} f_1|}{E}\\
		&\leq C_{13}\left(\widetilde{\eta}+\omega_{f_1}(\widetilde{\eta})+\frac{|\mathcal{I}_{2}|+C_{12}}{E}\right).
	\end{aligned}
\end{equation*}
where $C_{13}$ is a positive constant independent of $\eta$ and $\widetilde{\eta}$. Letting $\eta\rightarrow0$, we have
\begin{equation}
	\begin{aligned}
		|f_1(\gamma(0))|\leq C_{14}(\widetilde{\eta}+\omega_{f_1}(\widetilde{\eta})).
	\end{aligned}
\end{equation}
Setting $\widetilde{\eta}\rightarrow0$, we finally obtain
\begin{align*}
|f_1(\gamma(0))|=0,
\end{align*}
which readily implies the conclusion.

The proof is complete. 
\end{proof}

We are now at the stage of proving the proof of Theorem \ref{Theo:IP11}. As discussed earlier, we assume $n=2$ in \eqref{eq:mam1}, which means that $F$ is a quadratic monomial. Here, it is emphasized again that the case with a general $n$ can be proved in a completely similar manner. We need to prove the uniqueness of $a,b_2$ and $c_{k}$ for $k\geq 1$, i.e. when $\Lambda_{a_1,F_1,G_1}=\Lambda_{a_2,F_2,G_2}$, we need to show that one must have $a_1=a_2, b_{2,1}=b_{2,2}$ and $c_{k,1}=c_{k,2}$ for $k\geq 1$.
Here, $a_j,b_{k,j}$ and $c_{k,j}$ are the coefficients according to $\Lambda_{a_j,F_j,G_j}$, $j=1,2$.


First, we show the uniqueness of $c_1$ in (\ref{eq:g ck}). Let $u_{j},j=1,2$ be the solutions of
\begin{equation}
	\label{equ:Quasilinear_equation_j}
	\left\{
	\begin{aligned}
		&\partial^2_t u_j = a_j\Delta_g (u_j+F_j(x,u_j))+G_j(x,u_j),\ &&\text{on}\ \mathcal{M},\\
		&u_j(t,x)=\sum\limits_{i=1}^N\epsilon_ih^{(i)},\ &&\text{on}\ \Sigma,\\
		&u_j(0,x)=0,\ \partial_t u_j(0,x) = 0,&&\text{on}\ M,
	\end{aligned}
	\right.
\end{equation}
according to $\Lambda_{a_j,F_j,G_j}$, where
\begin{align*}
			F_j(x,y) = b_{2,j}y^2,\,\,\, G_j(x,y) = \sum\limits_{k=1}^{\infty} c_{k,j}(x)y^k.
\end{align*}
Let $N=1$. The first-order linearizations of $u_{j}$ $(j=1,2)$ are denoted by $u_j^{(1)}=\partial_{\epsilon_1}u_j\big|_{\epsilon_1=0}$, which satisfy
\begin{equation*}
	\left\{
	\begin{aligned}
		&\partial_t^2 u_j^{(1)}=a_j(x)\Delta_g u_j^{(1)}+c_{1,j}(x) u_j^{(1)},\ &&\text{on}\ \mathcal{M},\\
		&u_j^{(1)}(x)=h^{(1)},\ &&\text{on}\ \Sigma,\\
		&u_j^{(1)}(0,x)=0,\ \partial_t u_j^{(1)}(0,x) = 0,&&\text{on}\ M.
	\end{aligned}
	\right.
\end{equation*}
The corresponding DtN maps yield $\Lambda_1^{(1)}(h)=\Lambda_2^{(1)}(h)$, $\forall\,h\in \mathcal{H}^{m+1}(\Sigma)$ which means $\partial_{\nu} u_1^{(1)}=\partial_{\nu} u_2^{(1)}$ on $\Sigma$.

Let $v^{(1)}=u_2^{(1)}-u_1^{(1)}$, then
\begin{equation}
	\label{equ:equation_v1}
	\left\{
	\begin{aligned}
		&\partial_t^2 v^{(1)} = a_2\Delta_{g} v^{(1)}+c_{1,2}v^{(1)}+(a_2-a_1)\Delta u_1^{(1)}+(c_{1,2}(x)-c_{1,1}(x))u_{1}^{(1)},\ &&\text{on}\ \mathcal{M},\\
		&v^{(1)}=\partial_{\nu} v^{(1)}=0,\ &&\text{on}\ \Sigma,\\
		&v^{(1)}(0,x)=0,\ \partial_t v^{(1)}(0,x) = 0,&&\text{on}\ M,
	\end{aligned}
	\right.
\end{equation}
and the auxiliary equation is given by
\begin{equation}
	\label{equ:equation_w}
	\left\{
	\begin{aligned}
		&\partial_t^2 w=\Delta_g \left(a_2 w\right)+c_{1,2}w,\ &&\text{on}\ \mathcal{M},\\
		&w(T,x)=0,\ \partial_t w(T,x) = 0,&&\text{on}\ M.
	\end{aligned}
	\right.
\end{equation}
Multiplying both sides of equation \eqref{equ:equation_v1} by $w$ and then integrating by parts, and using the homogeneous boundary conditions of the function and its derivatives, we obtain the following integral identity:
\begin{align}
	\label{equ: first ordert integral}
	L_1 := \int_{\mathcal{M}}  (a_2-a_1)w\Delta u_1^{(1)}+(c_{1,2}(x)-c_{1,1}(x))wu_{1}^{(1)}\mathrm{d}V_{\overline{g}}=0,
\end{align}
where $\mathrm{d}V_{\overline{g}}=|\overline{g}|^{1/2}\mathrm{d}t\mathrm{d}x_1\mathrm{d}x_2\mathrm{d}x_3$
is the volume form on the Lorentz manifold $(\mathcal{M},\overline{g})$.

We first transform the equation $w$ to align with the equation discussed in Section \ref{sec:Gaussian_Beam_Solution}. Let $\widetilde{w} = a_2 w $ satisfy the following equation: 
\begin{equation*}
	\left\{
	\begin{aligned}
		&\partial_t^2 \widetilde{w} =a_2\Delta_g \left(\widetilde{w} \right)+c_{1,2}\widetilde{w} ,\ &&\text{on}\ \mathcal{M},\\
		&\widetilde{w} (T,x)=0,\ \partial_t \widetilde{w} (T,x) = 0,&&\text{on}\ M.
	\end{aligned}
	\right.
\end{equation*}
On any maximal null geodesics $\alpha(t)$ of $\mathcal{M}$, we construct the Gauss beams of order $1$ as follows:
\begin{equation}
	\label{equ:two Gaussian beam}
	\begin{aligned}
		u_1^{(1)} &= A_1^{(1)}(t,x)e^{\mathrm{i}\tau \phi_1^{(1)}}+R^{(1)}_{1,\tau},\\
		\widetilde{w}  &= A_2(t,x)e^{-\mathrm{i}\tau \overline{\phi_2}}+R_{2,\tau},
	\end{aligned}
\end{equation}
satisfying $\phi_1^{(1)} = \phi_2$ and $A_1^{(1)}=A_2=\det(Y(s))^{-1/2}\chi\left(\frac{|z'|}{\delta}\right)$. Substituting \eqref{equ:two Gaussian beam} into \eqref{equ: first ordert integral}, together with the estimate (\ref{equ:Ru_H1_property}) for $R^{(1)}_{1,\tau}$ and $R_{2,\tau}$, we obtain
\begin{align*}
    0=\lim\limits_{\tau\to\infty}\frac{L_1}{\tau^2} &= \lim\limits_{\tau\to\infty} \int_{\mathcal{M}} \left(1-\frac{a_1}{a_2}\right)A_1^{(1)}A_2e^{-2\tau \Im \phi_2}\mathrm{d}V_{\overline{g}}\\
    & = =\lim\limits_{\tau\to\infty}\int_{|z'|<\delta}\int_{\alpha(t_{-}-\delta')}^{\alpha(t_{+}+\delta')} \left(1-\frac{a_1}{a_2}\right)\det(|Y(s)|)^{-1}\chi^2\left(\frac{|z'|}{\delta} \right) e^{-2\tau \Im \phi_2} \mathrm{d}s\wedge \mathrm{d}z'
\end{align*}
It follows from Lemmas \ref{lem:Y},\,\ref{ray_transform} and \cite{FIKO21} that
\begin{align*}
	\mathcal{J}^{(2)}\left(1-\frac{a_1}{a_2}\right) = \int_{t\in I} \left(1-\frac{a_1}{a_2}\right) (\alpha(t))\mathrm{d} t=0,
\end{align*}
which implies $a_1=a_2$ on $\alpha(t)$. For any $p=(t_0,x_0)\in \mathcal{M}$, there is a maximal null geodesics  $\alpha(t)$
passing through $p$, which implies $a_1=a_2$ at $p$. The arbitrariness of $p$ implies $a_1=a_2=a$ on $\mathcal{M}$.

Using the same method as described above, we have
\begin{align*}
0=\lim\limits_{\tau\to\infty}L_1 &=\lim\limits_{\tau\to\infty}\int_{\mathcal{M}}\frac{1}{a_2}(c_{1,2}-c_{1,1})A_1^{(1)}A_2e^{-2\tau \Im \phi_2}\mathrm{d}V_{\tilde{g}}\\
&=\lim\limits_{\tau\to\infty}\int_{|z'|<\delta}\int_{\alpha(t_{-}-\delta')}^{\alpha(t_{+}+\delta')} \frac{1}{a_2}(c_{1,2}-c_{1,1})\det(|Y(s)|)^{-1}\chi^2\left(\frac{|z'|}{\delta} \right) e^{-2\tau \Im \phi_1^{(1)}}\mathrm{d}s\wedge \mathrm{d}z'.
\end{align*}
which implies $c_{1,2}=c_{1,1}$ on $\alpha(t)$ and so that $c_{1,2}=c_{1,1}=c_1$ on $\mathcal{M}$

Next, we show the uniqueness of $b_2$ and $c_2$ in (\ref{eq:g ck}) with similar arguments to $c_1$. Denoted by $u_{j}^{(12)}=\partial_{\epsilon_1\epsilon_2}^2 u_j\big|_{\epsilon=0}$
the second-order linearizations of $u_{j}$ $(j=1,2)$, they satisfy
\begin{equation*}
	\left\{
	\begin{aligned}
		&\partial_t^2 u_{j}^{(12)} =  a(x)\Delta_{g} u_{j}^{(12)}+c_1(t,x) u_{j}^{(12)}+a(x)\Delta_g\left(b_{j,2}(x)u^{(1)}u^{(2)}\right)+c_{j,2}(x)u^{(1)}u^{(2)},\ &&\text{on}\ \mathcal{M},\\
		&u_{j}^{(12)}(x)=0,\ &&\text{on}\ \Sigma,\\
		&u_{j}^{(12)}(0,x)=0,\ \partial_t u_{j}^{(12)}(0,x) = 0, &&\text{on}\ M,
	\end{aligned}
	\right.
\end{equation*}
The corresponding DtN maps yield $\Lambda_1^{(12)}(h)=\Lambda_2^{(12)}(h)$, $\forall\,h\in \mathcal{H}^{m+1}(\Sigma)$ which means $\partial_{\nu} \left(u_1^{(12)}+b_{1,2}(x)u^{(1)}u^{(2)}\right)=\partial_{\nu} \left(u_2^{(12)}+b_{2,2}(x)u^{(1)}u^{(2)}\right)$ on $\Sigma$.

Let $v^{(12)}=u_2^{(12)}-u_1^{(12)}$, then
\begin{equation}
	\label{equ:equation_v12}
	\left\{
	\begin{aligned}
		&\partial_t^2v^{(12)} =  a(x)\Delta_{g} v^{(12)}+c_1(x) v^{(12)}\\
  +&a(x)\Delta_{g}\left((b_{2,2}(x)-b_{1,2}(x))u^{(1)}u^{(2)}\right)+(c_{2,2}(x)-c_{1,2}(x))u^{(1)}u^{(2)},\ &&\text{on}\ \mathcal{M},\\
		&u_{j}^{(12)}(x)=0,\ &&\text{on}\ \Sigma,\\
		&u_{j}^{(12)}(0,x)=0,\ \partial_t u_{j}^{(12)}(0,x) = 0,&&\text{on}\ M,
	\end{aligned}
	\right.
\end{equation}	
Multiplying both sides of equation \eqref{equ:equation_v12} by $w$ and then integrating by parts, and using the homogeneous boundary conditions of the function and its derivatives, we obtain the following integral identity:
\begin{align}
	\label{equ:second order integral}
	L_2 := \int_{\mathcal{M}}\left(a(x)w\Delta_{g}((b_{2,2}-b_{1,2})u^{(1)}u^{(2)})+(c_{2,2}-c_{1,2})u^{(1)}u^{(2)}w\right)\mathrm{d}V_{\tilde{g}}=0.
\end{align}

On any maximal null geodesics $\alpha(t)$ of $\mathcal{M}$, we construct the Gaussian beams of order $2$ as follows:
\begin{equation}
	\label{equ:three Gaussian beam}
	\begin{aligned}
		u^{(1)} &= A_1^{(1)}(t,x)e^{\mathrm{i}\tau \phi_1^{(1)}}+R^{(1)}_{1,\tau},\\
		u^{(2)} &= A_1^{(2)}(t,x)e^{\mathrm{i}\tau \phi_1^{(2)}}+R^{(2)}_{1,\tau},\\
		w &= \frac{1}{a(x)}\overline{A_2}(t,x)e^{-\mathrm{i}\tau \overline{\phi_2}}+\frac{1}{a(x)}R_{2,\tau},
	\end{aligned}
\end{equation}
satisfying $\phi_1^{(1)}=\phi_1^{(2)}=\frac{\phi_2}{2}$ and $A_1^{(1)}=A_1^{(2)}=A_2=\det(Y(s))^{-1/2}\chi^2\left(\frac{|z'|}{|\delta|}\right)$.
Substituting \eqref{equ:three Gaussian beam} into \eqref{equ:second order integral}, together with the estimate in (\ref{equ:Ru_H1_property}) for $R^{(1)}_{1,\tau}$, $R^{(2)}_{1,\tau}$, and $R_{2,\tau}$, we obtain
\begin{align*}
0=\lim\limits_{\tau\to\infty}\tau^{-2}L_2& = \lim\limits_{\tau\to\infty}\int_{\mathcal{M}}  A_1^{(1)}A_1^{(2)}\overline{A_2}e^{-\tau\Im\phi_2}(b_{2,2}-b_{1,2})\langle d\phi_2,d\overline{\phi_2} \rangle_{\overline{g}} \mathrm{d}V_{\overline{g}}\\
& =\int_{t\in I} (b_{2,2}-b_{1,2})(\gamma(t))  \langle d\phi_2,d\overline{\phi_2} \rangle_{\overline{g}}\left|\det(Y(s))\right|^{-1}\det(Y(s))^{-1/2}\mathrm{d}t.
\end{align*}
Base on Lemma\,\ref{lem:Y}, we have $\det(Y)=\det(\widetilde{Y})$, which implies	
\begin{align*}
	\mathcal{J}_{\widetilde{Y}}^{(1)}\left((b_{2,2}-b_{2,1})\langle d\phi_2,d\overline{\phi_2} \rangle_{\overline{g}}\right)=0.
\end{align*}
Together with $\langle d\phi_2,d\overline{\phi_2} \rangle_{\overline{g}}>0$, we obtain $b_{2,1}=b_{2,2}$ on $\mathcal{M}$.

Therefore, the integral $L_2$ can be simplified as
\begin{align*}
	\int_{\mathcal{M}} A_1^{(1)}A_1^{(2)}\overline{A_2}e^{-\tau\Im \phi_2}(c_{2,1}-c_{2,2})\langle d\phi_2,d\overline{\phi_2} \rangle_{\overline{g}} \mathrm{d}V_{\overline{g}}=0,
\end{align*}
implying that $c_{2,1}=c_{2,2}$ on $\mathcal{M}$.

Finally, we prove the uniqueness of $c_k$ for $k\geq 3$ in (\ref{eq:g ck}) via using an induction argument as follows.
Suppose $b_2,c_1,\cdots,c_{k-1}$ have been uniquely determined through the measurements $\Lambda^{(1\cdots (k-1))}(h)$, which implies that we have uniquely determined the first-order to $(k-1)$-th order linearizations of $u_j$. Denoted by $u_j^{(1\cdots k)} = \partial_{\epsilon_1\cdots \epsilon_k}u_j\big|_{\epsilon=0}$
the $k$-order linearizations of $u_{j}$ $(j=1,2)$, they satisfy
\begin{equation*}
	\left\{
	\begin{aligned}
		&\partial_t^2 u_j^{(1\cdots k)} = a(x)\Delta_{g} u_j^{(1\cdots k)}+c_1(t,x) u_j^{(1\cdots k)}+\\&+\frac{1}{(k-1)!}\left[c_{j,k}(x)u^{(1)}\cdots u^{(k)}\right]
		+a(x)\Delta F_j^{(1\cdots k)}+G_j^{(1\cdots k)}=0, &&\text{on}\ \mathcal{M},\\
		&u_j^{(1\cdots k)}(x)=0,\ &&\text{on}\ \Sigma,\\
		&u_j^{(1\cdots k)}(0,x)=0,\ \partial_t u_j^{(1\cdots k)}(0,x) = 0,&&\text{on}\ M,
	\end{aligned}
	\right.
\end{equation*}
The corresponding DtN maps yield $\Lambda_1^{(1\cdots k)}(h)=\Lambda_2^{(1\cdots k)}(h)$, $\forall\,h\in \mathcal{H}^{m+1}(\Sigma)$.

Let $v^{(1\cdots k)}=u_2^{(1\cdots k)}-u_1^{(1\cdots k)}$. Using the induction assumption, it holds that
\begin{equation}
	\label{equ:v1k}
	\left\{
	\begin{aligned}
		&\partial_t^2 v^{(1\cdots k)} = a(x)\Delta_{g} v^{(1\cdots k)}+c_1(t,x) v^{(1\cdots k)}+\\&=\frac{1}{(k-1)!}\left[(c_{2,k}(x)-c_{1,k}(x))u^{(1)}\cdots u^{(k)}\right],
		\ &&\text{on}\ \mathcal{M},\\
		&v^{(1\cdots k)}(x)=0,\ &&\text{on}\ \Sigma,\\
		&v^{(1\cdots k)}(0,x)=0,\ \partial_t v^{(1\cdots k)}(0,x) = 0, &&\text{on}\ M,
	\end{aligned}
	\right.
\end{equation}
Multiplying both sides of equation \eqref{equ:v1k} by $w$ and then integrating by parts, and using the homogeneous boundary conditions of the function and its derivatives, we obtain the following integral identity:
\begin{align}
	\label{equ:k order integral}
	L_k: = \int_{\mathcal{M}} w\left[(c_{2,k}-c_{1,k})u^{(1)}\cdots u^{(k)}\right]\mathrm{d}V_{\overline{g}}=0.
\end{align}

On any maximal null geodesics $\alpha(t)$ of $\mathcal{M}$, we construct the Gaussian beams of order $k$ as follows:
\begin{equation}
	\label{equ:k Gaussian beam}
	\begin{aligned}
		u^{(i)} &= A_1^{(i)}(t,x)e^{i\tau\phi^{(i)}_1}+R_{1,\tau}^{(i)}, \, \, \mbox{for} \, \, i=1,\cdots,k,\\
		w &= \frac{1}{a(x)}\overline{A_2}(t,x)e^{-i\tau \overline{\phi_2}}+\frac{1}{a(x)}R_{2,\tau},
	\end{aligned}
\end{equation}
satisfying $\phi^{(1)}_1=\cdots =\phi_1^{(k)}=\frac{\phi_2}{k}$ and $a_1^{(1)}=\cdots=a_1^{(k)}=\overline{a}_2 = \det(Y(s))^{-1/2}$.
Substituting \eqref{equ:k Gaussian beam} into \eqref{equ:k order integral}, we have
\begin{align*}
	\int_{\mathcal{M}}A_1^{(1)}\cdots A_1^{(k)}\overline{A}_2(c_{k,1}-c_{k,2})e^{-\tau \Im \phi_2}\mathrm{d}V_{\overline{g}}=0,
\end{align*}
which implies $c_{k,1}=c_{k,2}$ on $\mathcal{M}$.

The proof of Theorem~\ref{Theo:IP11} is complete. 


\subsection{Proof of Theorem \ref{Theo:IP12}}

In this section, we provide the proof of Theorem~\ref{Theo:IP12} which adopts and develops the method of stationary phases from  \cite{UZ21a,UZ21b} in a different context. 

First, we present the following stationary phase lemma along with some auxiliary results to determine $b_2$:
\begin{Lemma}
\label{lemma: stationary phase}
(\cite{M20}) Consider the oscillatory integral
$I(\tau)$ defined as
\begin{equation*}
I(\tau) = \int_{\mathbb{R}^3}A(p)e^{\mathrm{i}\tau\phi(p)}dp,
\end{equation*}
and assume that
\begin{enumerate}
\item $A\in C^{6}(\mathbb{R}^3;\mathbb{C})$ and $\phi\in C^{9}(\mathbb{R}^3;\mathbb{R})$;

\item $p$ is the only critical of $\phi$ on supp$(A)$;

\item The Hessian $D^2\phi$ at $p$ is not degenerate.
\end{enumerate}

Then $I(\tau)$ is well-defined and has the asymptotic expansion
\begin{align*}
   I(\tau) = \left(\frac{2\pi}{\tau}\right)^{3/2}\frac{e^{\mathrm{i}\tau \phi(p)+\mathrm{i}\frac{\pi}{4}\text{sgn}(D^2(p))}}{|\det\left(D^2\phi(p) \right)|^{1/2}}\left(A(p)+\mathcal{O}(\tau^{-1})\right), \,\,  \mbox{as} \,\, \tau\to \infty.
\end{align*}
\end{Lemma}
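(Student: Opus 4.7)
The plan is to follow the classical three-step derivation of the stationary phase asymptotics: localize near $p$, reduce $\phi$ to a quadratic normal form via the Morse lemma, and evaluate the resulting Gaussian oscillatory integral explicitly. The regularity counts $A\in C^6$ and $\phi\in C^9$ are tuned so that the remainder is genuinely $\mathcal{O}(\tau^{-1})$ rather than merely $o(1)$.

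First I would localize. Choose a smooth cutoff $\chi$ that equals $1$ on a ball $B(p,r)$ and is supported in $B(p,2r)$, where $2r$ is smaller than the distance from $p$ to any other zero of $\nabla\phi$ on $\mathrm{supp}(A)$, so that $|\nabla\phi|\geq \delta>0$ on $\mathrm{supp}((1-\chi)A)$. Split $I(\tau)=I_1(\tau)+I_2(\tau)$ with $I_j$ carrying the factor $\chi$ and $1-\chi$ respectively. For $I_2$, the transpose of the first-order operator
\begin{equation*}
L := \frac{1}{\mathrm{i}\tau}\,|\nabla\phi|^{-2}\langle\nabla\phi,\nabla\cdot\rangle
\end{equation*}
satisfies $L(e^{\mathrm{i}\tau\phi})=e^{\mathrm{i}\tau\phi}$, and $N$ successive integrations by parts produce $I_2(\tau)=O(\tau^{-N})$ for any $N$ admitted by the available smoothness. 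This contribution is absorbed into the $O(\tau^{-1})$ remainder, so only $I_1$ requires care.

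Next I would normalize the phase. By the Morse lemma applied to the $C^9$ function $\phi$ at its nondegenerate critical point $p$, there exist a neighborhood of $p$ and a $C^7$ diffeomorphism $\Psi$ onto a neighborhood of $0$ with $\Psi(p)=0$ and
\begin{equation*}
\phi(\Psi^{-1}(y)) = \phi(p) + \tfrac{1}{2}\langle Qy,y\rangle,\qquad Q:=D^2\phi(p),
\end{equation*}
with $D\Psi(p)$ chosen so that $|\det D\Psi^{-1}(0)|=1$. Pulling back, $I_1(\tau)$ equals
\begin{equation*}
e^{\mathrm{i}\tau\phi(p)}\int_{\mathbb{R}^3}\tilde{A}(y)\,e^{\mathrm{i}\tau\langle Qy,y\rangle/2}\,dy,\qquad \tilde{A}(y):=\chi(\Psi^{-1}(y))\,A(\Psi^{-1}(y))\,|\det D\Psi^{-1}(y)|,
\end{equation*}
where $\tilde{A}\in C^6_c(\mathbb{R}^3)$ with $\tilde{A}(0)=A(p)$. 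Diagonalizing $Q$ orthogonally decouples the Gaussian into a product of one-dimensional Fresnel integrals, yielding the identity
\begin{equation*}
\int_{\mathbb{R}^3} e^{\mathrm{i}\tau\langle Qy,y\rangle/2}\,dy = \left(\frac{2\pi}{\tau}\right)^{3/2}\frac{e^{\mathrm{i}\frac{\pi}{4}\mathrm{sgn}(Q)}}{|\det Q|^{1/2}}.
\end{equation*}
Writing $\tilde{A}(y) = \tilde{A}(0) + \sum_j y_j R_j(y)$ with $R_j\in C^5_c$ and substituting, the constant term produces exactly the claimed leading asymptotic.

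The main obstacle is showing that the linear and higher remainder integrals contribute only $O(\tau^{-3/2-1})$, giving the stated relative error $O(\tau^{-1})$. For this I would use the identity $y_j\,e^{\mathrm{i}\tau\langle Qy,y\rangle/2}=(\mathrm{i}\tau)^{-1}(Q^{-1})_{jk}\partial_{y_k}e^{\mathrm{i}\tau\langle Qy,y\rangle/2}$ to integrate by parts, trading each factor of $y$ for a factor of $\tau^{-1}$ together with a derivative landing on $R_j$. Iterating twice reduces the remainder to a stationary phase integral of the same type but with an extra $\tau^{-1}$ prefactor, whose supremum bound consumes several additional derivatives of $\tilde{A}$; tracking these through the composition $\tilde{A}=(\chi A)\circ\Psi^{-1}\cdot|\det D\Psi^{-1}|$ is precisely what forces $A\in C^6$ (to survive the Morse change of variables which already costs two derivatives of $\phi$) and $\phi\in C^9$ (to produce a sufficiently smooth $\Psi$). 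Assembling the leading Gaussian contribution with this estimated remainder, and recombining $I_1$ with the negligible $I_2$, yields the asymptotic expansion claimed in the lemma.
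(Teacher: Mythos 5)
The paper does not prove this lemma; it is quoted verbatim as a known result from the reference \cite{M20} (whose whole point is a finite-regularity version of stationary phase), so there is no in-paper argument to compare against. Judged on its own, your proof is the standard and essentially correct route: non-stationary phase away from $p$ via the operator $L$, Morse normal form (losing two derivatives, so $\phi\in C^{9}$ gives a $C^{7}$ change of variables and $\tilde A\in C^{6}$), the Fresnel integral identity for the leading term, and integration by parts in $y$ to gain the extra factor of $\tau^{-1}$ in the remainder. Two points deserve tightening. First, the "iterating twice" description of the remainder estimate is loose: after one integration by parts the new amplitude $\sum_{j,k}(Q^{-1})_{jk}\partial_{y_k}R_j$ need not vanish at the origin, so you cannot repeat the same trick directly; what you actually need is a single integration by parts followed by the leading-order bound $\mathcal{O}(\tau^{-3/2})$ for an oscillatory integral with a $C^{4}$ compactly supported amplitude and nondegenerate quadratic phase (obtainable, e.g., by Parseval, since $C^{4}_c(\mathbb{R}^3)\subset H^{s}$ with $s>3/2$ guarantees an integrable Fourier transform). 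This yields the required $\mathcal{O}(\tau^{-5/2})$ total remainder and is where the precise derivative counts $A\in C^{6}$, $\phi\in C^{9}$ are actually consumed. Second, the identity $\int_{\mathbb{R}^3}e^{\mathrm{i}\tau\langle Qy,y\rangle/2}\,dy$ is only a conditionally convergent (regularized) Fresnel integral; since your constant term carries the cutoff, you should either justify discarding the tail $\int(1-\chi_0)e^{\mathrm{i}\tau\langle Qy,y\rangle/2}\,dy=\mathcal{O}(\tau^{-N})$ by another non-stationary phase argument, or run the whole computation through the Fourier-transform identity. With these repairs the argument is complete and matches what the cited reference provides.
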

\begin{Lemma}\label{lemma:4.6}(\cite{FO22})
For each point $p\in \mathcal{M}$, there exist four coefficients $\kappa^{(i)}$ for $i=0,1,2,3$ that satisfy
\begin{equation}
	\kappa^{(0)},\kappa^{(1)}>0,\,\, \kappa^{(2)},\kappa^{(3)}<0,\,\,  \sum\limits_{i=0}^{3} \kappa^{(i)}=0,
\end{equation}
and four tangent vectors $\zeta^{(i)}\in T_p M$ for $i=0,1,2,3$ that satisfy
\[
	\zeta^{(0)}\neq \zeta^{(1)},\,\, \sum\limits_{i=0}^{3} \kappa^{(i)}\zeta^{(i)}=0.
\]
\end{Lemma}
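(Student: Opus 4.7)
The statement is an elementary linear-algebra existence claim in the real inner-product space $T_pM$ (which is three-dimensional since $\dim M=3$), so my plan is to decouple the sign-constrained choice of the scalar weights $\kappa^{(i)}$ from the choice of the four tangent vectors $\zeta^{(i)}$ and then produce an explicit quadruple that satisfies every requirement simultaneously.

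First, I would fix the weights by the symmetric choice $\kappa^{(0)}=\kappa^{(1)}=1$ and $\kappa^{(2)}=\kappa^{(3)}=-1$. This trivially realises the required sign pattern $\kappa^{(0)},\kappa^{(1)}>0$, $\kappa^{(2)},\kappa^{(3)}<0$ and satisfies $\sum_{i=0}^{3}\kappa^{(i)}=0$. With this choice, the vector balance condition $\sum_{i=0}^{3}\kappa^{(i)}\zeta^{(i)}=0$ collapses to the pairing identity $\zeta^{(0)}+\zeta^{(1)}=\zeta^{(2)}+\zeta^{(3)}$, which is much easier to enforce than a general weighted cancellation.

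Next, since $\dim T_pM=3\geq 2$, I would select any two linearly independent (and, if the downstream use in Section \ref{sec:Gaussian_Beam_Solution} requires it, unit) vectors $v_1,v_2\in T_pM$ and set
\[
\zeta^{(0)}=v_1,\quad \zeta^{(1)}=-v_1,\quad \zeta^{(2)}=v_2,\quad \zeta^{(3)}=-v_2.
\]
Both sides of the balance identity vanish identically, so the vector condition holds, and $\zeta^{(0)}=v_1\neq -v_1=\zeta^{(1)}$ because $v_1\neq 0$. A direct verification of all enumerated conditions then closes the argument.

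The only subtlety, not explicit in the statement but present in the intended use within the stationary-phase analysis for our inverse problem, is that one typically wishes each $\zeta^{(i)}$ to be a unit vector so that the associated covectors $(1,\zeta^{(i)})$ are future-directed null covectors of $(\mathcal{M},\bar g)$; the construction above accommodates this without modification, since we may take $v_1,v_2$ to be orthonormal with respect to $g(p)$. Thus the only place where one could anticipate real obstruction---compatibility with an implicit nullness constraint---is in fact automatically resolved by the antipodal pairing structure of the construction.
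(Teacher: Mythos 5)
The paper does not actually prove this lemma---it is imported verbatim from \cite{FO22}---so there is no in-paper argument to compare against. Judged on its own terms, your construction ($\kappa=(1,1,-1,-1)$, $\zeta^{(0)}=v_1$, $\zeta^{(1)}=-v_1$, $\zeta^{(2)}=v_2$, $\zeta^{(3)}=-v_2$ with $v_1,v_2$ orthonormal) does verify every condition in the statement, and it is in fact the model example underlying the cited lemma: all four vectors $(1,\zeta^{(i)})$ are then lightlike for $\bar g=-dt^2+g$, so the implicit nullness requirement on the individual $\xi^{(i)}$ is met, as you say.

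The gap is in your closing claim that the antipodal pairing ``automatically resolves'' compatibility with the downstream use. It does the opposite for the one derived object that matters in (P2): there the third geodesic $\alpha_2$ is required to be a \emph{null} geodesic with tangent $\xi_2=\bigl(1,\tfrac{\kappa^{(2)}\zeta^{(2)}+\kappa^{(3)}\zeta^{(3)}}{\kappa^{(2)}+\kappa^{(3)}}\bigr)$, and your choice forces $\kappa^{(2)}\zeta^{(2)}+\kappa^{(3)}\zeta^{(3)}=0$, hence $\xi_2=(1,0)$, which is timelike. No null geodesic has this tangent, so the three-beam stationary-phase argument of Theorem~\ref{Theo:IP12} cannot be run with your quadruple. (Indeed, since $\xi_2$ is a convex combination of the two future-directed null vectors $\xi^{(2)},\xi^{(3)}$, it is timelike whenever $\zeta^{(2)}\neq\zeta^{(3)}$ and null only if they coincide---so the tension between the four-vector lemma and the three-geodesic construction in (P2) is structural, not an artifact of your choice; but a proof that explicitly invokes the application should not assert that the obstruction is resolved when it is not.) If you only intend to prove the lemma as literally stated, delete the final paragraph; if you intend to certify compatibility with Section~4.4, you must additionally arrange, or at least discuss, the causal character of $\xi_2$.
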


Consider the following equation:
\begin{equation}
\label{equ:w_equation_1}
    \left\{
    \begin{aligned}
        &\partial_{t}^2w = \Delta_g (aw)+c_1(t,x)w,\ &&\text{on}\ \mathcal{M},\\
        &w(t,x)=g_0,\ &&\text{on}\ \Sigma=(0,T)\times \partial M,\\
        &w(T,x)=0,\ \partial_t w(T,x) = 0,&&\text{on}\ M,
    \end{aligned}
    \right.
\end{equation}
and consider $\widetilde{w} = aw$ satisfying
\begin{equation}
\label{equ:w_equation_2}
    \left\{
    \begin{aligned}
        &\partial_{t}^2\widetilde{w} = a\Delta_g (\widetilde{w})+c_1(t,x)\widetilde{w},\ &&\text{on}\ \mathcal{M},\\
        &\widetilde{w}(t,x)=ag_0,\ &&\text{on}\ \Sigma=(0,T)\times \partial M,\\
        &\widetilde{w}(T,x)=0,\ \partial_t \widetilde{w}(T,x) = 0,&&\text{on}\ M,
    \end{aligned}
    \right.
\end{equation}
and set $i_1=1$ and $i_2=2$ in equations \eqref{equ:Second_Order_linearization_equation} and \eqref{equ:Second_Order_linearization_DtN}.
By applying integration by parts, we obtain
\begin{align*}
    &\int_0^T\int_{\partial M} ag_0\Lambda^{(12)}_{h}\mathrm{d}s\mathrm{d}t\\
    =&\int_0^T\int_{\partial M} ag_0\partial_{\nu}\left(u^{(12)}+b_2(t,x)u^{(1)}u^{(2)}\right)\mathrm{d}s\mathrm{d}t\\
    =&\int_0^T\int_{M} wa\Delta_g \left(u^{(12)}+b_2(t,x)u^{(1)}u^{(2)}\right)-\left\langle\nabla^g (aw), \nabla^g \left(u^{(12)}+b_2(t,x)u^{(1)}u^{(2)}\right)\right\rangle_g\mathrm{d}V_g\mathrm{d}t\\
    =& \int_0^T\int_{M}w\left(\partial_t^2 u^{(12)}-c_1(t,x)u^{(12)}-c_2(t,x)u^{(1)}u^{(2)}\right)-\left\langle\nabla^g \widetilde{w}, \nabla^g \left(u^{(12)}+b_2(t,x)u^{(1)}u^{(2)}\right)\right\rangle_g\mathrm{d}V_g\mathrm{d}t\\
    =&\int_0^T\int_{M} u^{(12)}\partial_t^2 w-w\left(c_1(t,x)u^{(12)}+c_2(t,x)u^{(1)}u^{(2)}\right)-\left\langle\nabla^g \widetilde{w}, \nabla^g \left(u^{(12)}+b_2(t,x)u^{(1)}u^{(2)}\right)\right\rangle_g\mathrm{d}V_g\mathrm{d}t\\
    =&\int_0^T\int_{M}-c_2(t,x)u^{(1)}u^{(2)} w-\left\langle\nabla^g \widetilde{w}, \nabla^g \left(b_2(t,x)u^{(1)}u^{(2)}\right)\right\rangle_g\mathrm{d}V_g\mathrm{d}t,
\end{align*}
where $V_g = |g|^{1/2}\mathrm{d}x_1\mathrm{d}x_2\mathrm{d}x_3$. This implies
\begin{align}
\label{equ:second_order_intergral_stationary_version}
    I_2 := \int_0^T\int_{M} \left\langle \nabla^g \widetilde{w},\nabla^g \left(b_2(t,x)u^{(1)}u^{(2)}\right) \right\rangle_{g}+c_2(t,x)u^{(1)}u^{(2)}w\mathrm{d}V_{g}\mathrm{d}t=-\int_0^T\int_{\partial M} ag_0\Lambda^{(12)}_{h}\mathrm{d}s\mathrm{d}t,
\end{align}
where the right-hand side is given.

Let $\alpha_1^{(1)}(t)=(t,\gamma_1^{(1)}(t))$, $\alpha_1^{(2)}(t)=(t,\gamma_1^{(2)}(t))$, and $\alpha_2(t)=(t,\gamma_2(t))$
be three distinct maximal null geodesics passing through $p\in \mathcal{M}$. Furthermore, we require these three maximal null geodesics to satisfy the following two properties:
\begin{enumerate}[(P1)]
	\item\label{item_P1} $\alpha_1^{(1)}$, $\alpha_1^{(2)}$, and $\alpha_2$  intersect at only one point $p = (t_0,x_0)\in \mathcal{M}$ for $t_0\in (0,T)$.
	\item\label{item_P2} Let $\xi_1^{(1)}$, $\xi_1^{(2)}$, and $\xi_2$ be the tangent vectors of $\alpha_1^{(1)}$, $\alpha_1^{(2)}$, and $\alpha_2$ at $p$, respectively, satisfying
	\begin{equation}
		\label{equ:definition_xi_1}
		\begin{aligned}
			&\xi_1^{(1)} = \xi^{(0)}=(1,\zeta^{(0)}),\quad \xi_1^{(2)} = \xi^{(1)}=(1,\zeta^{(1)}),\\
			&\xi_2 =\frac{\kappa^{(2)}\xi^{(2)}+\kappa^{(3)}\xi^{(3)}}{\kappa^{(2)}+\kappa^{(3)}} = \left(1,\frac{\kappa^{(2)}\zeta^{(2)}+\kappa^{(3)}\zeta^{(3)}}{\kappa^{(2)}+\kappa^{(3)}}\right),
		\end{aligned}
	\end{equation}
where $\kappa^{(i)}$ and $\zeta^{(i)}$ for $i=1,2,3$ are defined in Lemma \ref{lemma:4.6}. By selecting the coefficients $\kappa_1^{(1)}$, $\kappa_1^{(2)}$, $\kappa_2$ to satisfy
\begin{align*}
	\kappa_1^{(1)} = \kappa^{(0)},\,\,\,\kappa_1^{(2)} = \kappa^{(1)},\,\,  \kappa_2 = -\left(\kappa^{(2)}+\kappa^{(3)}\right),
\end{align*}
we have
\begin{align}
	\label{equ:linear correlation}
	\kappa_1^{(1)}\xi_1^{(1)}+\kappa_1^{(2)}\xi_1^{(2)}-\kappa_2\xi_2 = 0.
\end{align}
\end{enumerate}

On these three maximal null geodesics, we construct the corresponding Gaussian beams as follows:
\begin{equation}
	\label{equ:three_Gaussian_beam_Phase}
	\begin{aligned}
		u^{(1)} &= A_1^{(1)}(x)e^{i\tau \kappa_1^{(1)}\phi_1^{(1)}}+R_{1,\tau}^{(1)}, \,\, \mbox{on} \,\, \alpha_1^{(1)}, \\
		u^{(2)} &= A_1^{(2)}(x)e^{i\tau \kappa_1^{(2)}\phi_1^{(2)}}+R_{1,\tau}^{(2)}, \,\, \mbox{on} \,\, \alpha_1^{(2)}, \\
		\widetilde{w} &= \overline{A_2}(x)e^{-i\tau \kappa_2\overline{\phi_2}}+R_{2,\tau},  \,\, \mbox{on} \,\, \alpha_2. \\
	\end{aligned}
\end{equation}
where $\kappa_1^{(1)}$, $\kappa_1^{(2)}$, and $\kappa_2$ are defined in (P2). Substituting \eqref{equ:three_Gaussian_beam_Phase} into \eqref{equ:second_order_intergral_stationary_version}, together with estimate (\ref{equ:Ru_H1_property}) with $m=1$ for the corresponding remainders, we obtain
\begin{align}
	\label{equ:second order integral phase}
	\lim\limits_{\tau\to +\infty}\tau^{-2}I_2 = \lim\limits_{\tau\to +\infty}\int_{\mathcal{M}} b_2(t,x)A_1^{(1)}A_1^{(2)}\overline{A_2}e^{i\tau (\kappa_1^{(1)}\phi_1^{(1)}+\kappa_1^{(2)}\phi_1^{(2)}-\kappa_2\overline{\phi_2})} \mathrm{d}V_{\overline{g}}.
\end{align}

Let the amplitude and phase terms in the above integral be
\begin{equation}\label{eq:ttt1}
	\mathbb{A}_2=A^{(1)}_1A^{(2)}_1\overline{A_2} \,\, \mbox{and} \,\, \Phi_2 := \kappa_1^{(1)}\phi_1^{(1)}+\kappa_1^{(2)}\phi_1^{(2)}-\kappa_2\overline{\phi_2},
\end{equation}
respectively. We have:
\begin{Lemma}
\label{lemma: Property phase}
The terms in \eqref{eq:ttt1} satisfy the following properties:
\begin{enumerate}
\item	The amplitude $\mathbb{A}_2$ and the phase $\Phi_2$ are sufficiently smooth, satisfying the Assumption 1 in Lemma \ref{lemma: stationary phase}.

\item $\Phi_2(p)=0$ and $\nabla^{\tilde{g}} (\Phi_2)(p)=0$.

\item $D^2\Im \Phi_2(X,X)>0,\forall\,X\in T_{p}\mathcal{M}\setminus 0$.
\end{enumerate}
\end{Lemma}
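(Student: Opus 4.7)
The plan is to verify the three properties in turn by direct computation using the Gaussian beam construction from Section~\ref{sec:Gaussian_Beam_Solution} together with the algebraic relation \eqref{equ:linear correlation} from (P2). All three statements concern the behavior at the single point $p=(t_0,x_0)$ which lies on each of the three null geodesics $\alpha_1^{(1)},\alpha_1^{(2)},\alpha_2$, so each phase function can be analyzed in its own Fermi coordinate chart centered at $p$ and then combined.

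For \emph{item (1)}, recall from Section~\ref{sec:existence of GM} that each phase $\phi_j^{(i)}$ is a complex polynomial of degree $\leq N$ in the transverse Fermi variables with coefficients solving smooth ODEs in $s$, while each amplitude $A_j^{(i)}$ equals the smooth function $\det(Y(s))^{-1/2}$ times the cutoff $\chi(|z'|/\delta')$. Choosing the vanishing order $N$ sufficiently large ensures $\mathbb{A}_2\in C^{6}$ and $\Phi_2\in C^{9}$ near $p$, which is exactly what Lemma~\ref{lemma: stationary phase} requires. For \emph{item (2)}, note that in any Fermi chart one has $\phi_0=0$, $\phi_1=z_1$, and $\phi_k$ homogeneous of degree $k\geq 2$ in $z'=(z_1,z_2,z_3)$, so $\phi|_\alpha\equiv 0$ and in particular each $\phi_j^{(i)}$ vanishes at $p$; thus $\Phi_2(p)=0$. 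For the gradient, since all higher-order terms have vanishing first derivatives at $z'=0$, one gets $d\phi|_p=dz_1|_p$, which is real. Combining this with $\widetilde{g}|_\alpha=2ds\otimes dz_1+\sum_{i=2}^3 dz_i\otimes dz_i$ gives $\nabla^{\widetilde{g}}\phi(p)=\partial_s|_p=\dot\alpha(t_0)$, that is, the tangent vector to the underlying null geodesic. Applying this to each of the three phases, and observing that $d\overline{\phi_2}|_p=\overline{d\phi_2}|_p=dz_1|_p$ since the gradient is real at $p$, we obtain
\begin{equation*}
\nabla^{\widetilde{g}}\Phi_2(p)=\kappa_1^{(1)}\xi_1^{(1)}+\kappa_1^{(2)}\xi_1^{(2)}-\kappa_2\xi_2,
\end{equation*}
which vanishes by \eqref{equ:linear correlation}.

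For \emph{item (3)}, first rewrite the imaginary part. Since $\Im(-\kappa_2\overline{\phi_2})=\kappa_2\Im\phi_2$ and Lemma~\ref{lemma:4.6} ensures $\kappa_1^{(1)}=\kappa^{(0)}>0$, $\kappa_1^{(2)}=\kappa^{(1)}>0$, and $\kappa_2=-(\kappa^{(2)}+\kappa^{(3)})>0$, we have
\begin{equation*}
\Im\Phi_2=\kappa_1^{(1)}\Im\phi_1^{(1)}+\kappa_1^{(2)}\Im\phi_1^{(2)}+\kappa_2\Im\phi_2,
\end{equation*}
a positive linear combination of three non-negative functions. The property \eqref{property_of_phase_function} gives $\Im\phi_j^{(i)}\geq C|z'|^2$ in its own Fermi chart, so each Hessian $D^2\Im\phi_j^{(i)}(p)$ is positive semidefinite with one-dimensional null space spanned by the tangent vector $\xi_j^{(i)}$. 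Hence if $D^2\Im\Phi_2(p)(X,X)=0$ for some $X\in T_p\mathcal{M}$, then $X$ must be simultaneously proportional to $\xi_1^{(1)}$, $\xi_1^{(2)}$ and $\xi_2$. By (P2), $\xi_1^{(1)}=(1,\zeta^{(0)})$ and $\xi_1^{(2)}=(1,\zeta^{(1)})$ with $\zeta^{(0)}\neq\zeta^{(1)}$, so these two vectors are already linearly independent, forcing $X=0$. This establishes the strict positive definiteness.

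The main obstacle is the null-space analysis in item (3), where one must identify the null space of each $D^2\Im\phi_j^{(i)}(p)$ precisely with the one-dimensional tangent line $\mathbb{R}\xi_j^{(i)}$, rather than only knowing semidefiniteness. This is where the quantitative lower bound $\Im\phi\geq C|z'|^2$ is essential: it encodes positive definiteness of $\Im H(s)$ transverse to the geodesic, which in turn comes from the Riccati system and the invariant $\det(\Im H(s))|\det(Y(s))|^2=\det(\Im H_0)$ of Lemma~\ref{lem:Y}. Once transverse positivity is secured, the geometric input from Lemma~\ref{lemma:4.6} and (P2), guaranteeing that the three null tangent vectors do not share a common line, closes the argument.
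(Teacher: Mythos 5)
Your proof is correct and follows essentially the same route as the paper: smoothness of $\mathbb{A}_2,\Phi_2$ from the Gaussian beam construction, vanishing of each phase along its geodesic plus the relation \eqref{equ:linear correlation} for item (2), and positive semidefiniteness of each $D^2\Im\phi$ with null space equal to the corresponding tangent line, combined with the linear independence of $\xi_1^{(1)}$ and $\xi_1^{(2)}$, for item (3). The only slight misstatement is in item (1): the $C^{6}$/$C^{9}$ regularity is not obtained by "choosing the vanishing order $N$ sufficiently large" (which only improves the remainder estimate) but comes from the assumed smoothness of the metric $g$ (hence of the coefficients of the ODEs determining $\phi_k$ and $A_k$), as the paper's proof indicates; this does not affect the validity of the argument.
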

\begin{proof}
(1) Since $g$ is $C^{11}$ smooth in the Fermi coordinates, we know that $\mathbb{A}_2 \in C^{7}(\mathcal{M})$ and $\Phi_2 \in C^{10}(\mathcal{M})$
by similar arguments in \cite{FO22}. Therefore, the first assumption in Lemma\,\ref{lemma: stationary phase} is satisfied.

(2) Note that the phases $\phi_1^{(j)}$ vanish along $\alpha_1^{(j)}$ for $j=1,2$, and $\phi_2$ vanishes along $\alpha_{2}$.
Therefore, at the intersection point $p$, we have $\phi_1^{(1)}(p)=\phi_1^{(2)}(p)=\phi_2(p)=0$ and $\Phi_2(p)=0$.
Then, using $\nabla^{\bar{g}}\phi_1^{(j)}\left|_{\alpha_j}\right.=\dot{\alpha}_1^{(j)}$ and $\nabla^{\bar{g}}\phi_2\left|_{\alpha_2}\right.=\dot{\alpha}_2$, along with equations \eqref{equ:definition_xi_1} and \eqref{equ:linear correlation}, we find $\nabla^{\tilde{g}}(\Phi_2)(p)=0$.
	
(3) Using the Fermi coordinates, for each $j=1,2$, we have
\begin{align*}
	&D^2\Im \phi_1^{(j)}(X,X)\geq 0,\quad  D^2\Im \phi_2(X,X)\geq 0,\quad \forall X\in T_p\mathcal{M},\\
	&D^2\Im \phi_1^{(j)}(X,X)> 0, \quad \forall X\in T_p\mathcal{M}\setminus \text{span}\ \xi_1^{(j)},\\
	&D^2\Im \phi_2(X,X)> 0, \quad \forall X\in T_p\mathcal{M}\setminus \text{span}\ \xi_2,
\end{align*}
since $\xi_1^{(1)}$ and $\xi_1^{(2)}$ are linearly independent. Furthermore, we have $D^2\Im \Phi_2(X,X)>0$ for all $X\in T_{p}\mathcal{M}\setminus 0$.
\end{proof}

It follows from $b=b_2\in C^{\infty}(\mathcal{M})$ and property 1 of Lemma\,\ref{lemma: Property phase} that
$b_2 \mathbb{A}_2\in C^7(\mathbb{R}\times\mathbb{R}^3;\mathbb{C})$. This implies the satisfaction of the first assumption in Lemma\,\ref{lemma: stationary phase}.
Moreover, properties 2 and 3 of Lemma\,\ref{lemma: Property phase} ensure that the second and third assumptions in Lemma\,\ref{lemma: stationary phase} are also satisfied.
Therefore, the right-hand side of (\ref{equ:second order integral phase}) has the asymptotic expansion
\begin{equation}
	\label{equ:second order phase stationary lemma}
\begin{aligned}
	&\int_{\mathcal{M}} b_2(x) \mathbb{A}_2e^{\mathrm{i}\tau \Phi_2}\mathrm{d}V_{\overline{g}} \\
	= &\left(\frac{2\pi}{\tau}\right)^{2}\frac{e^{\mathrm{i}\tau \Phi_2(p)+\mathrm{i}\frac{\pi}{4}sgn(D^2\Phi_2(p))}}{|\det(D^2\Phi_2(p))|^{1/2}}(b_2(p) A_1^{(1)}(p)A_1^{(2)}(p)\overline{A_2}(p))+\mathcal{O}(\tau^{-3}),
\,\,  \mbox{as} \,\, \tau\to \infty.
\end{aligned}
\end{equation}
By combining (\ref{equ:second_order_intergral_stationary_version}), \eqref{equ:second order integral phase}, \eqref{equ:second order phase stationary lemma}, and the fact $\Phi_2(p)=0$, we have
\begin{align*}
	\lim\limits_{\tau\to \infty} I_2=\left(2\pi\right)^{2}\frac{e^{\mathrm{i}\frac{\pi}{4}sgn(D^2\Phi_2(p))}}{|\det(D^2\Phi_2(p))|^{1/2}}(b_2(p) A_{1,0}^{(1)}(p)A_{1,0}^{(2)}(p)\overline{A_{2,0}}(p)),
\end{align*}
where $A_{1,0}^{(1)}$, $A_{1,0}^{(2)}$, and $A_{2,0}$ are the dominant terms with respect to $\tau$ of $A_{1}^{(1)}$, $A_{1}^{(2)}$, and $A_{2}$, respectively.
Given the right-hand side of (\ref{equ:second_order_intergral_stationary_version}), we can uniquely determine the following expression
\begin{equation}\label{eq:rb1}
	b_2(p)A_{1,0}^{(1)}(p)A_{1,0}^{(2)}(p)\overline{A_{2,0}}(p),
\end{equation}
where $A_{1,0}^{(1)},A_{1,0}^{(2)},A_{2,0}$ are constructed as shown in \eqref{equ:amplitude_form}. Consequently,
$b_2(p)$ can be computed uniquely. Since $p$ can be any point in $\mathcal{M}$, this completes the determination of $b_2$ in $\mathcal{M}$.

To construct the coefficient $c_2$, we begin by considering the integral defined by $c_2$ as
\begin{equation}\label{eq:rc1}
\begin{split}
	& \lim\limits_{\tau\to\infty} \left(I_2-\int_{\mathcal{M}} \left\langle \nabla^g \widetilde{w},\nabla^g \left(b_2(t,x)u^{(1)}u^{(2)}\right) \right\rangle_{g}\mathrm{d}V_{\overline{g}}\right)\\
	=& \lim\limits_{\tau\to\infty}\int_{\mathcal{M}} c_2(t,x)u^{(1)}u^{(2)}w\mathrm{d}V_{\overline{g}}\\
	=& \lim\limits_{\tau\to\infty}\int_{\mathcal{M}}\frac{1}{a}c_2(t,x)A_1^{(1)}A_1^{(2)}\overline{A_2}e^{\mathrm{i}\tau \Phi_2}   \mathrm{d}V_{\overline{g}}. 
\end{split}
\end{equation}
Applying the stationary phase method and utilizing Lemma~\ref{lemma: Property phase}, we obtain
\begin{equation}\label{eq:rc2}
\begin{split}
	&\lim\limits_{\tau\to \infty} \tau^2\int_{\mathcal{M}}\frac{1}{a}c_2(t,x)A_1^{(1)}A_1^{(2)}\overline{A_2}e^{\mathrm{i}\tau \Phi_2}   \mathrm{d}V_{\overline{g}}\\
	=& \left(2\pi\right)^{2}\frac{e^{\mathrm{i}\frac{\pi}{4}sgn(D^2\Phi_2(p))}}{|\det(D^2\Phi_2(p))|^{1/2}}\left(\frac{c_2(p)}{a(p)} A_{1,0}^{(1)}(p)A_{1,0}^{(2)}(p)\overline{A_{2,0}}(p)\right).
\end{split}
\end{equation}
Following similar reasoning as for $b_2$, we can recover $c_2$.

Finally, to determine $c_k$ for $k\geq 3$ in (\ref{eq:g ck}), we employ induction.
Assuming that $b_2,c_1,\cdots,c_{k-1}$ for $k\geq 3$ have been uniquely constructed, we can proceed as follows.
For the coefficient $c_k$ with $k\geq 3$, we consider the $k$-th order linearization of the DtN map $\Lambda^{(1\cdots k)}$ as defined in (\ref{equ:k-th_Order_linearization_DtN}). This determines the integral
\begin{align}
\label{equ:Intergral k order}
I_k := \int_{\mathcal{M}}c_k(t,x)wu^{(1)}\cdots u^{(k)}\mathrm{d}V_{\overline{g}}.
\end{align}
Again, we let $\alpha_1^{(1)}(t)=(t,\gamma_1^{(1)}(t))$, $\alpha_1^{(2)}(t)=(t,\gamma_1^{(2)}(t))$, and $\alpha_2(t)=(t,\gamma_2(t))$
be three distinct maximal null geodesics passing through $p\in \mathcal{M}$. We further assume that these three geodesics satisfy properties (P1) and (P2), while the coefficients are defined as
\begin{align*}
    &\kappa_1^{(1)} = \kappa^{(0)},\,\,\,\kappa_2 = -(\kappa^{(2)}+\kappa^{(3)}),\\
    &\kappa_1^{(j)} = \frac{1}{k-2}\kappa^{(1)},\,\,\,j=2,\cdots,k-1,
\end{align*}
where $\kappa^{(0)},\kappa^{(1)},\kappa^{(2)}$ and $\kappa^{(3)}$ are defined in Lemma \ref{lemma:4.6}.
On these three maximal null geodesics, we construct the corresponding Gaussian beams as follows:
\begin{equation}
	\label{equ:k_Gaussian_beam_Phase}
	\begin{aligned}
		u^{(1)} &= A_1^{(1)}(x)e^{\mathrm{i}\tau \kappa_1^{(1)}\phi_1^{(1)}}+R_{1,\tau}^{(1)}, \,\, \mbox{on} \,\, \alpha_1^{(1)}, \\
		u^{(j)} &= A_1^{(j)}(x)e^{\mathrm{i}\tau \kappa_1^{(j)}\phi_1^{(j)}}+R_{1,\tau}^{(j)}, \,\, \mbox{on} \,\, \alpha_1^{(2)}, \,\, \mbox{for}\,\, j=2,\cdots,k,\\
		w &= \frac{1}{a}\overline{A_2}(x)e^{-\mathrm{i}\tau \kappa_2\overline{\phi_2}}+R_{2,\tau},  \,\, \mbox{on} \,\, \alpha_2. \\
	\end{aligned}
\end{equation}
Substituting \eqref{equ:k_Gaussian_beam_Phase} into \eqref{equ:Intergral k order}, together with estimate (\ref{equ:Ru_H1_property}) with $m=1$ for the corresponding remainders, we obtain
\begin{align}
	\label{equ:second order integral phase}
	\lim\limits_{\tau\to +\infty}I_k = \lim\limits_{\tau\to +\infty}\int_{\mathcal{M}} c_k(x)\overline{A_2}\prod_{j=1}^{k}A_1^{(j)}e^{\mathrm{i}\tau \left(\sum\limits_{j=1}^k\kappa_1^{(j)}\phi_1^{(j)}-\kappa_2\overline{\phi_2}\right)} \mathrm{d}V_{\overline{g}}.
\end{align}

Let the amplitude and phase terms be
\begin{equation}\label{eq:ttt2}
	\mathbb{A}_k=\overline{A_2}\prod_{j=1}^{k}A_1^{(j)} \,\, \mbox{and} \,\, \Phi_k :=\sum\limits_{j=1}^k\kappa_1^{(j)}\phi_1^{(j)}-\kappa_2\overline{\phi_2},
\end{equation}
respectively. As demonstrated in the proof of Lemma \ref{lemma: Property phase}, we can establish the following lemma:
\begin{Lemma}
\label{lemma:k Property phase}
The two terms in \eqref{eq:ttt2} satisfy the following properties:
\begin{enumerate}
\item The amplitude $\mathbb{A}_k$ and the phase $\Phi_k$ are sufficiently smooth, and they satisfy the Assumption 1 in Lemma \ref{lemma: stationary phase}.

\item $S_k(p)=0$ and $\nabla^{\tilde{g}} (S_k)(p)=0$.

\item $D^2\Im S_k(X,X)>0,\forall X\in T_{p}\mathcal{M}\setminus 0$.
\end{enumerate}
\end{Lemma}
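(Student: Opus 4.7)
The plan is to extend the argument used for the $k=2$ case in Lemma~\ref{lemma: Property phase} to the $k$-th order Gaussian beam setup, exploiting the same structural features together with the particular choice of coefficients $\kappa_1^{(j)}$ and $\kappa_2$ fixed just before \eqref{equ:k_Gaussian_beam_Phase}. The symbol $S_k$ appearing in the statement is to be identified with $\Phi_k$ defined in \eqref{eq:ttt2}.

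For property (1), the smoothness of $\mathbb{A}_k$ and $\Phi_k$ follows because $g\in C^{10}(M)$ and the Fermi coordinate construction in Section~\ref{sec:Fermi} preserves this regularity. Each phase $\phi_1^{(j)}$ and $\phi_2$ is a degree-$N$ polynomial in the transverse variables whose coefficients come from the Riccati and linear transport systems solved in Section~\ref{sec:existence of GM}, hence belongs to $C^{10}(\mathcal{M})$; each amplitude $A_1^{(j)}, A_2$ inherits $C^7$ regularity via \eqref{equ:amplitude_form} and the subsequent first-order ODEs. Consequently $\Phi_k$ and $\mathbb{A}_k$, being a linear combination and a product respectively, meet the regularity hypothesis of Lemma~\ref{lemma: stationary phase}.

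For property (2), the vanishing at $p$ is immediate because each $\phi_1^{(j)}\equiv 0$ on its carrier geodesic, $\phi_2\equiv 0$ on $\alpha_2$, and the three geodesics meet at $p$. For the gradient I compute $\nabla^{\bar{g}}\phi_1^{(1)}|_p=\xi^{(0)}$, $\nabla^{\bar{g}}\phi_1^{(j)}|_p=\xi^{(1)}$ for $j\geq 2$ (all of these beams are carried by $\alpha_1^{(2)}$), and $\nabla^{\bar{g}}\overline{\phi_2}|_p=\xi_2$. The coefficient assignment is engineered so that $\kappa_1^{(1)}=\kappa^{(0)}$, the collection $\kappa_1^{(j)}$ for $j\ge 2$ sums to $\kappa^{(1)}$, and $-\kappa_2\xi_2=\kappa^{(2)}\xi^{(2)}+\kappa^{(3)}\xi^{(3)}$ by the definition of $\xi_2$; substituting gives $\nabla^{\bar{g}}\Phi_k(p)=\sum_{i=0}^3\kappa^{(i)}\xi^{(i)}$. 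The time component of this sum equals $\sum_i\kappa^{(i)}=0$ and the spatial component equals $\sum_i\kappa^{(i)}\zeta^{(i)}=0$, both by Lemma~\ref{lemma:4.6}, so $\nabla^{\bar{g}}\Phi_k(p)=0$.

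For property (3), the Gaussian beam construction gives $D^2\Im\phi_1^{(j)}(X,X)\ge 0$ and $D^2\Im\phi_2(X,X)\ge 0$, with null spaces exactly $\operatorname{span}\{\xi_1^{(j)}\}$ and $\operatorname{span}\{\xi_2\}$ respectively. Using $\Im\overline{\phi_2}=-\Im\phi_2$, I rewrite $D^2\Im\Phi_k=\sum_{j=1}^k\kappa_1^{(j)}D^2\Im\phi_1^{(j)}+\kappa_2\,D^2\Im\phi_2$; since $\kappa^{(0)},\kappa^{(1)}>0$ and $\kappa_2=-(\kappa^{(2)}+\kappa^{(3)})>0$ by Lemma~\ref{lemma:4.6}, this is a sum of positive semidefinite forms with strictly positive weights. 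Its kernel is therefore the intersection $\operatorname{span}\{\xi^{(0)}\}\cap\operatorname{span}\{\xi^{(1)}\}\cap\operatorname{span}\{\xi_2\}=\{0\}$, because the three tangent directions are pairwise non-parallel by the choice (P2) together with the independence provided by Lemma~\ref{lemma:4.6}. The only place where real care is required is in step (2), where the precise weighting of the $k-1$ beams sitting on $\alpha_1^{(2)}$ must balance the single beam on $\alpha_1^{(1)}$ and the beam on $\alpha_2$ so that the combination reduces to $\sum_i\kappa^{(i)}\xi^{(i)}$; once this bookkeeping is in order, no new analytic input beyond the $k=2$ argument is needed.
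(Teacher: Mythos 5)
Your proposal is correct and follows essentially the same route as the paper, which itself gives no details for this lemma beyond deferring to the $k=2$ proof of Lemma~\ref{lemma: Property phase}; you correctly identify $S_k$ with $\Phi_k$, carry out the gradient cancellation $\nabla^{\bar g}\Phi_k(p)=\sum_{i=0}^{3}\kappa^{(i)}\xi^{(i)}=0$ via Lemma~\ref{lemma:4.6}, and obtain positivity of $D^2\Im\Phi_k$ as a positively weighted sum of semidefinite forms with trivially intersecting kernels. The only caveat is inherited from the paper: the coefficients $\kappa_1^{(j)}=\tfrac{1}{k-2}\kappa^{(1)}$ are listed for $j=2,\dots,k-1$, leaving $\kappa_1^{(k)}$ undefined, so your assumption that $\sum_{j\ge 2}\kappa_1^{(j)}=\kappa^{(1)}$ is the intended (and necessary) reading of an indexing typo rather than a gap in your argument.
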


Similar to the arguments for $b_2$, it follows from Lemmas\,\ref{lemma:k Property phase} and \ref{lemma: stationary phase}
that $c_k$ in (\ref{equ:second order integral phase}) can be uniquely recovered.

\subsection{Proofs of Theorems \ref{Theo:IP21} and \ref{Theo:IP22}.}

For the case of non-zero initial values, we can employ a similar linearization method.
Specifically, we suppose that $u$ satisfies the hyperbolic system \eqref{equ:Quasilinear_equation} where $h$ has the form \eqref{equ:h_expansion}, and let $\widetilde{u}(t,x,\epsilon)$ satisfy the following system:
\begin{equation*}
\left\{
\begin{aligned}
    &\partial^2_t \widetilde{u} = a(x)\Delta_g (\widetilde{u}+F(x,\widetilde{u}))+G(x,\widetilde{u}),\ &&\text{on}\ \mathcal{M},\\
    &\widetilde{u}(t,x)=0,\ &&\text{on}\ \Sigma,\\
    &\widetilde{u}(0,x)=\varphi(x),\ \partial_t \widetilde{u}(0,x) = \psi(x),&&\text{on}\ M.
\end{aligned}
\right.
\end{equation*}
Here, the initial conditions satisfy
\begin{align*}
    \|\varphi\|_{H^{m+1}(M)}+\|\psi\|_{H^m(M)}\leq \frac{\epsilon_0}{2},
\end{align*}
where $\epsilon_0$ is defined in Theorem \ref{theorem:local well-posedness}.
Let
\begin{align*}
    \widetilde{u}^{(i)}(t,x) = \lim\limits_{\epsilon\to 0}\frac{u(t,x)-\widetilde{u}(t,x)}{\epsilon_i},
\end{align*}
which satisfies the system:
\begin{equation*}
\left\{
\begin{aligned}
    &\partial_t^2 u^{(i)}=a(x)\Delta_{g} \widetilde{u}^{(i)}+c_1(x) \widetilde{u}^{(i)},\ &&\text{on}\ \mathcal{M},\\
    &\widetilde{u}^{(i)}(t,x)=h^{(i)},\ &&\text{on}\ \Sigma,\\
    &\widetilde{u}^{(i)}(0,x)=0,\ \partial_t \widetilde{u}^{(i)}(0,x) = 0,&&\text{on}\ M.
\end{aligned}
\right.
\end{equation*}
Thus, $\widetilde{u}^{(i)}$ and $u^{(i)}$ satisfy the same equation, and consistent results hold for higher-order linearizations.
Therefore, we can recover all coefficients $b_2$ and $c_k$ for $k\geq 1$ using the DtN mappings as demonstrated in the proofs of Theorem  \ref{Theo:IP11} and \ref{Theo:IP12}.

Next, we present the proof of the initial value inversion using the observability inequality \cite{DZZ08}. First, we denote $u_j$ for $j=1,2$ as the solutions to the following system:
\begin{equation}
    \left\{
    \begin{aligned}
        &\partial^2_t u_j= a(x)\Delta_g (u_j+F(x,u_j))+G(x,u_j),\ &&\text{on}\ \mathcal{M},\\
        &u_j(t,x)=h,\ &&\text{on}\ \Sigma,\\
        &u_j(0,x)=\varphi_j(x),\ \partial_t u_j(0,x) = \psi_j(x),&&\text{on}\ M,
    \end{aligned}
\right.
\end{equation}
where $F(t,x,\cdot)$ and $G(t,x,\cdot)$ have been previously inverted, and $u_1$ and $u_2$ share the same measurements as
\begin{align*}
    \Lambda^1_{\varphi_1,\psi_1,F,G}(h) = \Lambda^1_{\varphi_2,\psi_2,F,G}(h), \,\, \forall \,h\in \mathcal{H}^{m+1}(\Sigma).
\end{align*}
 Set $v = u_1-u_2$ to satisfy
\begin{equation}
    \left\{
    \begin{aligned}
        &\partial^2_t v= a(x)(1+K_1)\Delta_g (v)+\left\langle \nabla^g K_1,\nabla^g v\right\rangle_{g}+(\Delta_g K_1+K_2) v,\ &&\text{on}\ \mathcal{M},\\
        & v(t,x)=\partial_{\nu} v(t,x)=0,\ &&\text{on}\ \Sigma,\\
        &v(0,x)=\varphi_1(x)-\varphi_2(x),\ \partial_t v(0,x) = \psi_1(x)-\psi_2(x),&&\text{on}\ M,
    \end{aligned}
\right.
\end{equation}
where
\begin{align*}
    K_1(t,x) = \frac{F(t,x,u_1)-F(t,x,u_2)}{u_1-u_2},\,\,\, K_2(t,x) = \frac{G(t,x,u_1)-G(t,x,u_2)}{u_1-u_2}.
\end{align*}
By the admissible property of $F$ and $G$, and the boundedness of $u_1$ and $u_2$ stated in Remark \ref{rmk:sobolev_embedding}, we have $K_1,\nabla K_1,\Delta K_1,K_2 \in L^{\infty} (\mathcal{M})$.
Therefore,  according to the observability inequality, there exists a constant $C>0$ such that
\begin{align*}
    \|\varphi_1-\varphi_2\|_{H_0^1(\Omega)}+\|\psi_1-\psi_2\|_{L^2(\Omega)}\leq C \|\partial_{\nu} v\|_{L^2(\Sigma)}=0,
\end{align*}
which implies $\varphi_1 = \varphi_2$ and $\psi_1=\psi_2$. This completes the proofs of Theorems \ref{Theo:IP21} and \ref{Theo:IP22}.


\section*{Acknowledgments}

The work of H. Liu was supported by NSFC/RGC Joint Research Scheme (project N\_CityU101/21), ANR/RGC Joint Research Scheme (project A\_CityU203/19) and the Hong Kong RGC General Research Funds (projects 11311122, 11304224 and 11300821). The work of K. Zhang is supported in part by China Natural National Science Foundation (No.~12271207), and by the Fundamental Research Funds for the Central Universities, China.

\end{document}